\documentclass{article}
\usepackage[utf8x]{inputenc}
\usepackage{amssymb}
\usepackage{amsmath}
\usepackage{mathrsfs}
\usepackage{bm}
\usepackage{amsthm}
\usepackage{enumitem}
\usepackage{color}
\usepackage{graphicx}
\usepackage{bm}

\newcommand{\ci}[1]{\mathscr{#1}}
\newcommand{\C}{\mathbf{C}}
\newcommand{\g}[1]{\mathfrak{#1}}

\newcommand{\con}[1]{\overline{#1}}
\newcommand{\de}{\partial}

\newcommand{\alfa}{\alpha}

\newcommand{\R}{\mathbf{R}}

\newtheorem{lem}{Lemma}[section]
\newtheorem{pro}[lem]{Proposition}
\newtheorem{thm}[lem]{Theorem}
\newtheorem{rem}[lem]{Remark}

\title{On the  variation of the Einstein-Hilbert action in pseudohermitian 
geometry \\ (with an appendix by Xiaodong Wang)}

\author{Claudio Afeltra\thanks{Scuola Normale Superiore, Piazza dei Cavalieri 7, 56126 Pisa. e-mail: claudio.afeltra@sns.it}, Jih-Hsin Cheng 
	\thanks{Institute of Mathematics, Academia Sinica and NCTS 
		6F, Astronomy-Mathematics Building
		No. 1, Sec. 4
		Roosevelt Road,
		Taipei 10617, TAIWAN e-mail: cheng@math.sinica.edu.tw}, Andrea Malchiodi \thanks{Scuola Normale Superiore, Piazza dei Cavalieri 7, 56126 Pisa. e-mail: andrea.malchiodi@sns.it}, Paul Yang \thanks{Princeton University, Department of Mathematics
		Fine Hall, Washington Road, 
		Princeton NJ 08544-1000 USA e-mail: yang@math.princeton.edu}}

\begin{document}

\maketitle

\begin{abstract}
	In this paper we compute the first and second variation of the normalized Einstein-Hilbert functional 
	on  CR manifolds. We characterize critical points as pseudo-Einstein  structures. 
	We then turn to the second variation on standard spheres. While the situation is quite similar to the Riemannian case in dimension 
	greater or equal to five, in three dimension we observe a crucial difference, which mainly depends 
	on the embeddable character of  the perturbed CR structure. 
\end{abstract}

\

\section{Introduction} \label{s:intro}
 The \emph{Einstein-Hilbert action} is the functional on the space of Riemannian metrics on a closed manifold $M$ of dimension $n \geq 3$  defined by
 \begin{equation} \label{eq:EH-R}
 	\ci{R}(g)= \int_M R_gdv_g,
 \end{equation}
where $R_g$ is the scalar curvature of the metric $g$. 
 This functional is of great importance in differential geometry, and its variation is given by 
 $$ d\ci{R}(g)[h]=\int_M  E^{ij} h_{ij} dv_g$$
  where $E_{ij}$ are the components of the Einstein tensor 
 $$
 E_{ij} = R_{ij} - \frac 12 R_g g_{ij}, 
 $$
 with $R_{ij}$ the Ricci tensor of $g$.  
 This implies that the critical points of $\ci{R}$ are Ricci-flat metrics.  
 
 Its variation on asymptotically flat metrics led to the notion of {\em ADM mass}, see e.g. \cite{LeeParker}, a 
 well-studied concept in geometric relativity which has been important, among other questions, in the resolution of some 
 cases of Yamabe's conjecture, see \cite{Schoen1}. 
 
 \
 
It is also relevant to extremize the functional $\ci{R}$ constrained to the class of metrics with volume normalized 
to $1$, or equivalently to consider the scaling-invariant version 
 \begin{equation}\label{eq:tEH-R}
 	 \tilde{\ci{R}}(g) = Vol_g(M)^{\frac{2-n}{n}} \ci{R}(g). 
 \end{equation}

%
%
%
 
  The first variation of $\tilde{\ci{R}}$ is given by 
 $$
 d \tilde{\ci{R}}(g)[h] = - V(g)^{\frac{2-n}{n}} \left( \frac{n-2}{2n} V(g)^{-1} \ci{R}(g) \int_M \textrm{tr}_g h \, dv_g 
 + \int_M E^{ij} h_{ij} dv_g \right). 
 $$

 The restriction of the Einstein-Hilbert functional to a given conformal class of metrics, usually called the \emph{Yamabe functional}, plays a central role in conformal geometry (see \cite{Trudinger}, \cite{Aubin1}, \cite{Schoen1}, \cite{Aubin2}, \cite{LeeParker}).
 In fact, along a conformal variation of metric that infinitesimally preserves the volume, i.e. taking 
 $h_{ij} = \eta \, g_{ij}$ with $\int_M \eta \, dv_g = 0$, by the above formula one has 
$$
  d \tilde{\ci{R}}(g)[h] = \left( \frac{n}{2} - 1  \right) V(g)^{\frac{2-n}{n}} \int_M R_g \eta \, dv_g.  
$$ 
Criticality subject to the above constraint for $\eta$  then implies that $R_g$ is constant. 
With this condition the first variation of $\tilde{\ci{R}}$ becomes 
$$
  d \tilde{\ci{R}}(g)[h] = V(g)^{\frac{2-n}{n}} \int_M \left( \frac{R_g}{n} g^{ij} - R^{ij} \right) h_{ij} dv_g = 0, 
$$
 whose vanishing for all $h$ implies that $g$ is Einstein. 
 
 \
 
 
 The round metric of $S^n$ is a saddle point for $\tilde{\ci{R}}(\cdot)$, being a minimum restricted to its conformal class, and a local maximum in the orthogonal directions (see \cite{Schoen2}, \cite{Koiso}, \cite{FischerMarsden}, \cite{Viaclovsky}).
To see this, 
 one can split the family 
 of variations $h$ of $g_{S^n}$ as $S_0 \oplus S_1 \oplus S_2$, where $S_0$ stands for the  Lie derivatives of $g_{S^n}$,
 $S_1$ the  pure trace tensors and $S_2$ the family of TT-variations, i.e. with zero trace and divergence. The former give 
 zero contribution since they just induce a family of diffeomorphisms. 
 
 Concerning the  component $S_1$, one has that for $h = \eta g_{S^n}$  with $\eta$ 
 of zero average,  the volume of the metric is preserved at first order. The second derivative in this 
 direction is then given by 
 $$
  d^2 \tilde{\ci{R}}(g_{S^n})[h,h] = \frac{(n-1)(n-2)}{2} \int_{S^n} \left( |\nabla_{g_{S^n}} \eta|^2 - n \eta^2 \right) dv_{g_{S^n}}, 
  \quad  \int_{S^n} \eta \, dv_{g_{S^n}} = 0. 
 $$
 On the round sphere $S^n$  the first non-zero eigenvalue of the Laplacian is equal to $n$, with 
 eigenspace generated by the affine functions of $\R^{n+1}$ restricted to $S^n$, and therefore the quadratic form 
 in the latter formula is non-negative definite.

 Since the second variation of $\ci{R}$ diagonalizes with repsect to $S_0, S_1$ and $S_2$, we can therefore restrict ourselves to $h \in S_2$, 
and for this choice one finds (see e.g. Section 5.1 in \cite{Viaclovsky})
 \begin{equation}\label{eq:II-TT}
 	d^2 \tilde{\ci{R}}(g_{S^n})[h,h] =  V(g)^{\frac{2-n}{n}} \int_{S^n} \langle h, \Delta h - h \rangle dv_g, 
 \end{equation}
 where 
 $$
 (\Delta h)_{ij} = \nabla^k \nabla_k h_{ij}. 
 $$
 One sees in this way that the second variation of $\tilde{\ci{R}}$ is strictly negative-definite on the 
 subspace of variations $S_2$. 
 
 More in general, on Einstein manifolds different from the standard sphere the second 
 variation in the conformal directions is strictly positive-definite due to the 
 eigenvalue estimate in \cite{Lichnerowitz}, while on manifolds of constant curvature $K > 0$ 
 the second variation in the TT directions is strictly negative-definite, since in  \eqref{eq:II-TT} 
 the term $-h$ is replaced by $- K \, h$. 
 
 \

 The purpose of this paper is to study the situation on pseudohermitian manifolds, about which we recall the 
 definition and properties in Section \ref{s:prel}. Here we limit ourselves to mention that they are $(2n+1)$-dimensional 
 manifolds with an $n$-dimensional complex sub-bundle $\ci{H}$ of the complexified tangent bundle $T^{\C}M$ of $M$, such that $\ci{H}\cap\overline{\ci{H}} = \{0\}$, $[\ci{H},\ci{H}]\subseteq\ci{H}$, and on which a complex rotation $J$ acts. Letting 
$H(M)=\g{Re}(\ci{H}\oplus\overline{\ci{H}})$, 
 there exists 
 a one-form $\theta$ such that $H(M)=\ker\theta$.
 Classical examples are hypersurfaces of $\C^n$ or the Heisenberg group. 
 If the \emph{Levi form} $L_{\theta}(W,\con{Z}):=-id\theta(W,\con{Z})$ is non degenerate, pseudohermitian manifolds carry a natural connection, the \emph{Tanaka-Webster connection} (see \cite{DragomirTomassini}, \cite{Lee2}, \cite{Webster}), and from it curvature operators can be built in analogy with the Riemannian case. 
 The pseudohermitian counterpart of the scalar curvature is called \emph{Webster curvature}, and $\theta \wedge (d \theta)^n$ 
 acts as a natural volume form. 
 
 In the study of properties of CR geometry one observes a difference between low and high dimension related to the embeddable character of the
 underlying structures. By a classical 
 theorem by  Boutet de Monvel, see \cite{Boutet}, if a closed CR manifold $M$ of dimension $n\ge 5$ is strictly pseudoconvex (that is, the Levi form is positive-definite), then it can be CR-embedded in $\C^N$ for some natural integer $N$. This is not always true in dimension three (see \cite{ChenShaw}) and, as we will see, this fact has repercussions on the analogies between CR and Riemannian geometry. 
 
 Some conditions in three dimensions that characterize embeddability are as follows. In \cite{CCY}, Chanillo, Chiu and Yang found sufficient conditions for embeddability related to the spectral properties of the \emph{CR Paneitz operator} (see \cite{Takeuchi} for a partial converse). In \cite{CMY1}    a positive mass theorem for embeddable three-dimensional CR manifolds was proved, while, as a counterexample by the same authors in \cite{CMY2} shows, in the non embeddable case the pseudohermitian mass can be negative.
 For pseudohermitian structures which are perturbations of the standard one on $S^3$, Bland  characterized in \cite{Bland} embeddability in terms of the Fourier expansion of the deformation tensor, as it  will be recalled below. 
 
 \
 
 In analogy with \eqref{eq:EH-R} and \eqref{eq:tEH-R}, denoting by $W_{J,\theta}$ the Webster curvature on  a CR manifold $M$, we set 
 $$\ci{W}(J,\theta)= \int_{M} W_{J,\theta} \, \theta \wedge (d \theta)^n; 
 $$
 $$
  \tilde{\ci{W}}(J,\theta) = \left( \int_{M} \theta \wedge (d \theta)^n \right)^{-\frac{Q-2}{Q}} 
 \ci{W}(J,\theta), 
 $$
 where $Q = 2n+2$ stands for the {\em homogeneous dimension} of the manifold $M$. 
 
 \
 
We have first the following result.

 \begin{thm}\label{t:0}
Suppose $(J,\theta)$ is critical for $\tilde{{\ci{W}}}$. Then the Webster curvature of  
$(M,J,\theta)$ is constant and the torsion 
vanishes identically. Moreover, if $c_1(\mathcal{H}) = 0$ then $(M,J,\theta)$ is  pseudo-Einstein. 
\end{thm}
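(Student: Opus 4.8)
In analogy with the Riemannian computation recalled in the Introduction, the plan is to compute the first variation of $\tilde{\ci{W}}$ and impose that it vanishes. Since the underlying contact distribution $H(M)$ is fixed, a variation of the pseudohermitian structure $(J,\theta)$ splits into two independent pieces: a conformal rescaling of the contact form, $\dot\theta=2f\theta$ with $f\in C^\infty(M)$, and a deformation of the complex structure on $H(M)$, $\dot J=2E$, where $E$ is a symmetric section of $\ci{H}^{*}\otimes\ci{H}^{*}$ (satisfying, when $n\ge2$, the linearized integrability constraint). First I would compute $d\tilde{\ci{W}}(J,\theta)$ along each of these families separately; criticality is then equivalent to both Euler--Lagrange conditions holding.

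Along the conformal direction, restricting $\tilde{\ci{W}}$ to the fixed CR structure $(\ci{H},J)$ turns it (up to the usual conventions) into the CR Yamabe functional. Concretely, using the transformation law of $W_{J,\theta}$ under $\hat\theta=e^{2tf}\theta$ and the scaling $\theta\wedge(d\theta)^n\mapsto e^{Qtf}\,\theta\wedge(d\theta)^n$ of the volume form — the derivative terms in $f$ forming a sublaplacian which integrates to zero — one gets
$$ \tfrac{d}{dt}\Big|_{t=0}\tilde{\ci{W}}(J,e^{2tf}\theta)=(Q-2)\left(\int_M\theta\wedge(d\theta)^n\right)^{-\frac{Q-2}{Q}}\int_M f\,(W_{J,\theta}-\overline{W})\,\theta\wedge(d\theta)^n, $$
with $\overline{W}$ the average of $W_{J,\theta}$. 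Vanishing for every $f$ forces $W_{J,\theta}$ to be constant.

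The heart of the proof is the variation along $\dot J=2E$. Here $\theta$ and $d\theta$ are frozen, so $\tfrac{d}{dt}\ci{W}(J_t,\theta)=\int_M \dot W_t\,\theta\wedge(d\theta)^n$ with no contribution from the measure. Differentiating the Tanaka--Webster structure equations — the deformation $\dot J=2E$ tilts the admissible coframe $\{\theta^\alpha\}$, hence changes the connection forms $\omega_\beta{}^{\alpha}$ and then the curvature — produces $\dot W$ as an explicit differential expression in $E$; integrating against $\theta\wedge(d\theta)^n$ and integrating by parts, the terms carrying derivatives of $E$ assemble into total divergences and drop out, leaving
$$ \tfrac{d}{dt}\Big|_{t=0}\ci{W}(J_t,\theta)=c\,\g{Re}\int_M A^{\alpha\beta}E_{\alpha\beta}\,\theta\wedge(d\theta)^n $$
for a nonzero dimensional constant $c$, where $A_{\alpha\beta}$ is the pseudohermitian torsion. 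As $E$ runs over the admissible symmetric deformations and $A_{\alpha\beta}$ is itself symmetric, criticality forces $A_{\alpha\beta}\equiv0$. I expect this step — tracking the variation of the connection and curvature under a deformation of $J$, and arranging the integration by parts so that only the torsion term survives — to be the main obstacle; it is the pseudohermitian counterpart of the fact that, in the Riemannian setting, the gradient of the total scalar curvature is the Einstein tensor.

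Finally, with $W_{J,\theta}$ constant and $A_{\alpha\beta}\equiv0$, I would invoke the contracted second Bianchi identity of the Tanaka--Webster connection, which for vanishing torsion reduces to a relation $\nabla^{\bar\beta}R_{\alpha\bar\beta}=c_n R_{,\alpha}$; constancy of $W_{J,\theta}=R$ then makes the trace-free Webster Ricci tensor divergence-free. By Lee's theory of \emph{pseudo-Einstein} structures, the obstruction to the existence of a pseudo-Einstein contact form within $(\ci{H},J)$ is precisely $c_1(\ci{H})$, and two pseudo-Einstein contact forms differ by a CR-pluriharmonic factor; so under the hypothesis $c_1(\ci{H})=0$ one can compare $(J,\theta)$ with a pseudo-Einstein representative and conclude that the trace-free Webster Ricci tensor of $(J,\theta)$ itself vanishes, i.e. $(M,J,\theta)$ is pseudo-Einstein. (In dimension three the pseudo-Einstein condition is the scalar one and follows at once from $R$ constant and $A_{\alpha\beta}\equiv0$.)
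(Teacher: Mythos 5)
Your first two steps coincide with the paper's argument: the conformal variation reduces to the CR Yamabe functional and forces $W_{J,\theta}$ to be constant, and the variation in $J$ (with $\theta$ frozen) leaves, after integration by parts, only the pairing of $E$ against the torsion, so that a suitable choice of $E$ kills $A_{\alpha\beta}$. One small point there: you should check that the specific deformation you need (in the paper, $E_{\alpha\beta}=A_{\alpha\beta}$) actually satisfies the linearized integrability constraint $E^{\con{\gamma}}_{\alpha,\beta}=E^{\con{\gamma}}_{\beta,\alpha}$; in the paper this is exactly where the Bianchi identity $A_{\alpha\beta,\gamma}=A_{\alpha\gamma,\beta}$ enters. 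Saying that ``$E$ runs over admissible symmetric deformations and $A$ is symmetric'' is not by itself enough.

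The genuine gap is in the last step. From $A\equiv 0$ (transverse symmetry) and $c_1(\mathcal{H})=0$, Lee's Theorem E gives a pseudo-Einstein contact form $e^{2u}\theta$ in the same conformal class, but your proposed ``comparison'' does not close the argument: the statement that two pseudo-Einstein contact forms differ by a CR-pluriharmonic factor presupposes that \emph{both} are pseudo-Einstein, which is precisely what is to be proved for $\theta$. Knowing that $\theta$ has constant $W$, zero torsion, and divergence-free trace-free Ricci does not, by any quoted result, force $u$ to be admissible in the sense needed, and this is exactly the nontrivial point the paper addresses by a separate argument (Wang's appendix). The mechanism used there is different and worth noting: set $\phi=\rho_\theta-\frac{W}{n}\,d\theta$, where $\rho_\theta$ is the first Chern form. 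With $A=0$ and $W$ constant, the Bianchi identities give $R_{\alpha\con{\nu},\alpha}=0$, and a computation of $d^{\ast}$ on $(1,1)$-forms with respect to the adapted metric shows $d^{\ast}\phi=0$; since $c_1(\mathcal{H})=0$ makes $\rho_\theta$, hence $\phi$, exact, one gets $\left\Vert \phi\right\Vert^{2}=\left\langle d^{\ast}\phi,\widetilde{\chi}\right\rangle=0$, so the trace-free Ricci vanishes and $\theta$ itself is pseudo-Einstein (the three-dimensional case being trivial, as you say). Without this (or an equivalent) argument, your final paragraph asserts the conclusion rather than proving it.
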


 \begin{rem}
 	Recall that a pseudohermitian structure is called \emph{pseudo-Einstein }
 	when the Ricci tensor is a constant multiple of the Levi form, see 
 	\cite{Lee2} and Section \ref{s:prel}, which is trivial when $n = 1$. We notice that by Proposition D in \cite{Lee2}, the requirement that $c_1(\mathcal{H}) = 0$ 
 	is necessary for the existence of a pseudo-Einstein structure. 
 \end{rem}
 
 The constancy of the Webster curvature and the vanishing of the torsion 
 follow from the first variation formula for $\tilde{{\ci{W}}}$. By Theorem E in \cite{Lee2}, 
 there exists a conformal choice of contact form which is pseudo-Einstein. 
 The fact that this occurs for the critical structure itself is a consequence 
 of a divergence formula by Xiaodong Wang, displayed in the appendix of the paper (see also \cite{Wang} 
 for a related result).

 We next specialize to the case of the spheres $S^{2n+1}$ endowed with the standard CR structure 
 inherited from $\C^{n+1}$, i.e.  
 $\mathcal{H}(S^{2n+1}) = T^{1,0}\C^{n+1}\cap T^{\C}_p S^{2n+1}$ and the complex 
 rotation $J_0$ is the restriction of the ambient complex one to the holomorphic 
 tangent space. A standard choice of contact form is given by 
 $$\theta_0 = \frac{i}{2}\sum_{k=1}^2 \left( z^kdz^{\con{k}}-z^{\con{k}}dz^k \right),$$
 in which case the volume element induced by  $\theta_0$ 
 compares to the Euclidean one as 
 \begin{equation*}\label{eq:vol-sphere}
 	   \theta_0 \wedge (d \theta_0)^n = 2^n n! \, dv_{\textrm{Eucl.}}. 
 \end{equation*}
This can be easily seen for example by evaluating the two volume forms at the point $(1, 0, \dots, 0) \in \R^{2n+2}$ 
and by using the homogeneity of the spherical structure.

 For $u \in C^\infty(S^{2n+1})$, we also define the {\em sub-gradient} of $u$ as 
 $$
   \nabla_b u = \Pi \nabla_{g_{S^{2n+1}}} u
 $$
where $\Pi$ denotes the orthogonal projection onto $H(S^{2n+1})$.

 In this paper we are going to exhibit another relationship between embeddability and geometric properties of CR structures.
 Starting from the three-dimensional case, we define next a variation of $J$ as 
 \begin{equation}\label{var3d}
 \dot{J}=2E = 2{E_{1}}^{\con{1}}\theta^{1}\otimes Z_{\con{1}} + \textrm{conj.},  	
 \end{equation}
 where, on $S^3$, we consider the standard generator of $\mathcal{H}$ and its dual form  
 \begin{equation}\label{eq:Z1th1}
 	Z_1=z^{\con{2}}\frac{\de}{\de z^1}-z^{\con{1}}\frac{\de}{\de z^2}, \qquad 
 	\theta^1=z^2dz^1 -z^1dz^2.
 \end{equation}



For $z_1, z_2$ the complex coordinates of $\C^2 \supseteq S^3$, we define then the subspace 
\begin{equation}\label{eq:Gamma-m}
	\Gamma_m := \left\{ f \in C^\infty(S^{3};\C) \; | \; f(e^{i \theta} z_1, e^{i \theta} z_2) = 
	e^{i m \theta} f(z_1,z_2) \right\}.
\end{equation}
For products of powers in the $z$- and $\con{z}$-coordinates, the index $m$ counts the number 
of holomorphic factors minus the anti-holomorphic ones. 

%
 
 \begin{thm}\label{t:1}
 	Consider the standard pseudohermitian structure $(S^3, J_0, \theta_0)$. 
 	\begin{itemize}
 		\item[(i)] Let  $\eta \in C^\infty(S^3)$ be such that $\int_{S^3} \eta \, \theta_0 \wedge d \theta_0 = 0$. Then 
 		$$d^2_\theta \tilde{\ci{W}}(J_0, \theta_0)[\eta \theta_0, \eta \theta_0] = c_3 
 		\int_{S^3} (|\nabla_b \eta|^2 - 3 \eta^2) \theta_0 \wedge d \theta_0 \geq 0 $$ 
 		for some positive constant $c_3$, with equality holding 
 		if and only is $\eta$ is the restriction to $S^3$ of some linear function on $\C^2$. 
 		\item[(ii)] The second derivative of $\tilde{\ci{W}}(J_0, \theta_0)$ diagonalizes 
 		with respect to the splitting in $\Gamma_m$: precisely, setting $E_1^{\con{1}} = \sum_{m \in \mathbf{Z}} E^{(m)}$, one has    
 		\begin{equation}\label{eq:dd3}
 			 		 d^2_J \tilde{\ci{W}}(J_0, \theta_0)[E, E] =  \sum_m (m+4) \int_{S^3} |E^{(m)}|^2  \theta_0 \wedge d \theta_0 
 		\end{equation}
 	\end{itemize}
 \end{thm}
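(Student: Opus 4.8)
The strategy is to reduce the computation of $d^2\tilde{\ci W}$ at the round sphere to a pair of explicit spectral problems on $S^3$: one for the conformal (contact-form) direction, and one for the CR-structure direction. For part (i), I would start from the general first-variation formula for $\tilde{\ci W}$ mentioned before Theorem \ref{t:0}, differentiate once more along the pure conformal family $\theta_t = e^{2t\eta}\theta_0$ (or $\theta_0 + t\eta\theta_0$ up to reparametrisation), and use that $(S^3,J_0,\theta_0)$ is a critical point with constant Webster curvature $W_0$ and vanishing torsion, so that all boundary terms and first-order cross terms drop. What remains is a quadratic form in $\eta$ built from the linearisation of the Webster curvature under a conformal change; in dimension three the conformal change of scalar curvature is governed by the CR Yamabe operator $L_b = -4\Delta_b + W$ (with $\Delta_b$ the sublaplacian, so $\nabla_b$ as defined above enters through $\int|\nabla_b\eta|^2$), and this is exactly where the factor $|\nabla_b\eta|^2 - 3\eta^2$ and the constant $c_3>0$ will come from after normalising $W_0$ for the unit sphere. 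The volume constraint $\int_{S^3}\eta\,\theta_0\wedge d\theta_0 = 0$ kills the lowest (constant) mode, and then the claimed nonnegativity, together with the equality case, follows from the known spectrum of $\Delta_b$ on $S^3$: the first nonzero eigenvalue equals $2$ on the span of the restrictions of linear functions and strictly exceeds the relevant threshold on all higher modes. So the analytic heart of (i) is identifying the right sublaplacian eigenvalue bound and checking that linear functions, and only they, achieve it.

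For part (ii), I would take the variation $\dot J = 2E$ as in \eqref{var3d}, holding $\theta = \theta_0$ fixed, and again expand $\tilde{\ci W}$ to second order around the critical point. Since the first variation vanishes, $d^2_J\tilde{\ci W}(J_0,\theta_0)[E,E]$ is a genuine quadratic form in the single complex function $E_1^{\con 1}\in C^\infty(S^3;\C)$; because deforming $J$ does not change the volume form (the contact form is fixed), there is no volume-normalisation correction and one is left with $\int_{S^3}(\text{second variation of } W)$. The key computational input is the second variation of the Webster curvature under a deformation of the CR structure with fixed contact form — this is a standard but lengthy pseudohermitian calculation involving the linearised Tanaka–Webster connection, the way $E$ enters the structure equations $d\theta^1 = \theta^1\wedge\omega_1^{\ 1} + A^1_{\ \con 1}\theta\wedge\theta^{\con 1}$ and its perturbation, and integration by parts against the background volume. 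After simplification this produces a self-adjoint operator acting on sections of $\theta^1\otimes Z_{\con 1}$, and I would then diagonalise it against the $S^1$-action $(z_1,z_2)\mapsto(e^{i\vartheta}z_1,e^{i\vartheta}z_2)$: the spaces $\Gamma_m$ are exactly the joint eigenspaces, and on $E^{(m)}\in\Gamma_{m}$ (shifted appropriately by the weight of $\theta^1\otimes Z_{\con 1}$) the operator acts as multiplication by $m+4$. Establishing that eigenvalue formula — reducing the differential operator to the $S^1$-weight counting — is where I would spend the effort; homogeneity of the spherical structure means one can compute it mode by mode on monomials $z^\alpha\con z^\beta$, matching powers.

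The main obstacle, and the place the paper's originality shows, is part (ii): carrying the second-variation-of-Webster-curvature calculation under a pure CR deformation through to a clean operator, and then correctly bookkeeping the $S^1$-weights so that the eigenvalue comes out as the affine function $m+4$ rather than something only piecewise linear. One has to be careful that $E = E_1^{\ \con 1}\theta^1\otimes Z_{\con 1}$ carries weight shifts from both $\theta^1$ and $Z_{\con 1}$, so ``$E_1^{\con 1}\in\Gamma_m$'' is the bookkeeping convention to pin down, and that the relevant operator involves both $\Delta_b$-type terms and the purely ``holomorphic'' transport along $Z_1$ coming from the torsion/integrability structure — it is the asymmetry of the latter under $m\mapsto -m$ that yields a linear, rather than even, dependence on $m$, and this is the structural reason embeddability (equivalently, the sign of $m+4$, which is negative only for finitely many low $m$) enters. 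By contrast, assembling (i) from the conformal linearisation of $W$ is essentially the CR analogue of the classical Riemannian $S_1$ computation recalled in the introduction, and I expect it to go through with only the CR Yamabe operator and the sublaplacian spectrum of $S^3$ as inputs.
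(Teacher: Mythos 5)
Your plan follows the same route as the paper: part (i) is obtained exactly as you describe, by viewing $\tilde{\ci{W}}$ restricted to conformal changes as the CR Yamabe quotient, computing the second variation at the critical point $u\equiv 1$ (the paper's formula \eqref{eq:2nd-var-CR}, with $b_1=4$ and $W_{J_0,\theta_0}$ constant), and then using the zero-average constraint, the spectral gap of $\Delta_b$ on $S^3$ and the fact that the borderline eigenfunctions are the restrictions of linear functions; part (ii) fixes $\theta_0$, notes that the volume is unchanged, and diagonalizes the resulting quadratic form on the Fourier spaces $\Gamma_m$ with the weight shift coming from $\theta^1\otimes Z_{\overline{1}}$, which is precisely the bookkeeping you flag.

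There is, however, a genuine gap in part (ii): the decisive computation is not carried out, and your guess for what it yields is not what actually happens. The paper first derives the full second variation of $W$ under $\dot J=2E$ at fixed $\theta$ (formula \eqref{eq:ddotRR}), and the heart of the proof is that, upon integration over $S^3$, \emph{all} the terms quadratic in $E$ involving covariant derivatives cancel pairwise by parts, thanks to the symmetries $E_{\alpha\beta}=E_{\beta\alpha}$ and $E^{\overline{\gamma}}_{\alpha,\beta}=E^{\overline{\gamma}}_{\beta,\alpha}$ forced by integrability of $J_{(t)}$ (Lemma \ref{l:E-symm}), while the terms in $\dot E,\dot F$ drop by criticality. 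What survives is only the contribution of the torsion variation $\dot A^{\alpha}_{\overline{\gamma}}=-iE^{\alpha}_{\overline{\gamma},0}$, giving $d^2_J\tilde{\ci{W}}=-i\,n\int E_{\alpha}{}^{\overline{\gamma}}{}_{,0}E_{\overline{\gamma}}{}^{\alpha}+\mathrm{conj.}$ (Lemma \ref{l:0-der}). So the operator to diagonalize is not ``$\Delta_b$-type terms plus transport along $Z_1$'' as you anticipate: it is purely first order, namely $\nabla_T$ along the Reeb field acting on the section $E=E_1{}^{\overline{1}}\theta^1\otimes Z_{\overline{1}}$, and the eigenvalue $m+4$ comes from $T E^{(m)}=i\tfrac m2 E^{(m)}$ together with $\nabla_T\theta^1=i\theta^1$, $\nabla_T Z_{\overline{1}}=iZ_{\overline{1}}$, after adding the conjugate. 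Without the variational formulas for connection, torsion and curvature and the cancellation argument, the eigenvalue in \eqref{eq:dd3} is asserted rather than derived. A secondary point for (i): your quoted spectral value (first nonzero eigenvalue $2$ on linear functions) does not match the paper's normalization, in which linear functions satisfy $-\Delta_b\eta=\tfrac n2\,\eta$; the constants must be rechecked against the chosen contact form and the definition of $|\nabla_b\eta|^2$, although the structure of your argument there coincides with the paper's.
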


Some comments on this result are in order. First, the behaviour of the functional in the conformal 
directions is completely analogous to the Riemannian case. However, a difference appears in the 
sign of second variation in the complementary directions, which we show to 
be tightly related to the embeddability properties of the (infinitesimally) perturbed CR structures. In fact, it was proved in \cite{Bland} 
via a normal form that the perturbed structures that are embeddable are precisely characterized by having vanishing 
Fourier components $E^{(m)}$ for $m \leq - 4$. Notice the difference in notation by a conjugation 
of the coefficient from (13.7) in \cite{Bland} and  \eqref{var3d}, so the present condition on $m$ 
changes by a sign compared to Theorem 15.1 in \cite{Bland}. 
We then observe a situation similar to the Riemannian 
one as long as the CR structure stays embeddable, with a reversed sign if we stay infinitesimally 
orthogonal to these and to the conformal deformations. This fact is coherent with  
the results obtained in 
\cite{CMY1} and \cite{CMY2}, where estimates on the Sobolev quotient were derived for 
a class of embeddable three-manifolds and for the (non-embeddable) Rossi spheres. 
 
 \
 
 We consider next the higher-dimensional case, which turns out to be always in analogy with the 
 Riemannian one: recall from the previous discussion that in dimensions greater or equal to five the CR structures are always 
 embeddable. 
 
 \begin{thm}\label{t:2}
 	Consider the standard pseudohermitian structure $(J_0, \theta_0)$ on $S^{2n+1}$ with $n > 1$.  
 	\begin{itemize}
 		\item[(i)] Let  $\eta \in C^\infty(S^{2n+1})$ be such that $\int_{S^{2n+1}} \eta \, \theta_0 \wedge d \theta_0 = 0$. Then 
 		$$d^2_\theta \tilde{\ci{W}}(J_0, \theta_0)[\eta \theta_0, \eta \theta_0] = 
 		c_n \int_{S^{2n+1}} (|\nabla_b \eta|^2 - n \eta^2) \theta_0 \wedge d \theta_0 \geq 0$$
 		for some positive constant $c_n$, with equality holding 
 		if and only is $\eta$ is the restriction to $S^{2n+1}$ of some linear function on $\C^{n+1}$. 
 		\item[(ii)] For the variation in $J$ we have instead 
 		$$
 		d^2_J \tilde{\ci{W}}(J_0, \theta_0)[E, E] > 0 \qquad \hbox{for any } E \not\equiv 0. 
 		$$
 	\end{itemize}
 \end{thm}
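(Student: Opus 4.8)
The plan is to handle the conformal directions (part (i)) and the directions deforming $J$ (part (ii)) separately. For part (i), I would use that the restriction of $\tilde{\ci{W}}$ to the conformal class of $\theta_0$ is the CR Yamabe functional: writing a conformal variation as $\theta_t = u_t^{2/n}\theta_0$ with $u_0\equiv 1$, one has $\ci{W}(\theta_t) = \int_{S^{2n+1}} u_t\, L_0 u_t\;\theta_0\wedge(d\theta_0)^n$ and $\mathrm{Vol}(\theta_t) = \int_{S^{2n+1}} u_t^{2^*}\,\theta_0\wedge(d\theta_0)^n$, where $L_0 = -(2+\tfrac2n)\Delta_b + W_0$ is the CR Yamabe operator of the standard (torsion-free, constant-curvature) structure and $2^* = \tfrac{2Q}{Q-2}$. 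Taking $\dot u_0 = \tfrac n2\,\eta$, so that $\dot\theta_0 = \eta\,\theta_0$, and using $\int_{S^{2n+1}}\eta\,\theta_0\wedge d\theta_0 = 0$ --- which says the volume is preserved to first order and, $W_0$ being constant, already forces the first variation to vanish --- one expands numerator and denominator to second order in $t$; the routine computation leaves a positive multiple of $\int_{S^{2n+1}}\bigl((n+1)|\nabla_b\eta|^2 - W_0\,\eta^2\bigr)\theta_0\wedge(d\theta_0)^n$, which, with the value $W_0 = n(n+1)$ of the Webster curvature of the standard sphere, becomes $c_n\int_{S^{2n+1}}(|\nabla_b\eta|^2 - n\,\eta^2)\,\theta_0\wedge d\theta_0$. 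The nonnegativity and the equality case then reduce to the sharp lower bound $\lambda_1(-\Delta_b) = n$ for the first positive eigenvalue of the sublaplacian on $(S^{2n+1},\theta_0)$ together with the identification of the corresponding eigenspace with restrictions of linear functions on $\C^{n+1}$; this is the CR Obata theorem of Jerison--Lee, and can also be read off the bigraded spherical-harmonic decomposition on $S^{2n+1}$, the linear functions being exactly the harmonics of bidegree $(1,0)$ and $(0,1)$.

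For part (ii) I would first observe that $d\theta_0$, and hence the volume form $\theta_0\wedge(d\theta_0)^n$, does not depend on $J$ (only the splitting $H\otimes\C = \ci{H}\oplus\overline{\ci{H}}$ does), so that $d^2_J\tilde{\ci{W}}(J_0,\theta_0)[E,E] = V_0^{-\frac{Q-2}{Q}}\int_{S^{2n+1}}\ddot W\,\theta_0\wedge(d\theta_0)^n$, where $\ddot W$ is the second variation of the Webster curvature along a family $J_t$ with $\dot J_0 = 2E$ and $\theta\equiv\theta_0$ fixed. The core of the matter is the derivation of the variation formulas for the Tanaka--Webster connection and its curvature under a deformation of $J$: for each $t$ one picks an admissible coframe adapted to $\ci{H}_t$ and expands connection and curvature forms in $t$, using heavily that the standard sphere has vanishing torsion, constant Webster curvature, and (trivially, since $H^2(S^{2n+1}) = 0$) $c_1(\ci{H}) = 0$, so it is pseudo-Einstein and, consistently with Theorem \ref{t:0}, a critical point of $\tilde{\ci{W}}$; in particular the first variation of $\ci{W}$ in $J$ vanishes, and $d^2_J\ci{W}$ is a well-defined quadratic form in $E$.

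After integration by parts on the closed manifold this quadratic form takes the shape $\int_{S^{2n+1}}\langle E,\mathcal{L}E\rangle\,\theta_0\wedge(d\theta_0)^n$ for a self-adjoint operator $\mathcal{L}$ acting on the deformation tensors, built from the sublaplacian, the operators $\bar\partial_b$ and $\bar\partial_b^{*}$, and algebraic contractions with the Levi form and the (constant) curvature. One then computes the spectrum of $\mathcal{L}$: both the bundle of deformation tensors and $\mathcal{L}$ are invariant under the CR automorphism group $U(n+1)$, so the space of sections decomposes into $U(n+1)$-irreducibles --- equivalently, refining by the $S^1$-weight, into the pieces indexed as in \eqref{eq:Gamma-m} --- and $\mathcal{L}$ acts by an explicitly computable scalar on each of them. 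The theorem amounts to the claim that, in contrast with the three-dimensional eigenvalue $m+4$ of \eqref{eq:dd3}, which changes sign, every such scalar is strictly positive as soon as $n>1$, so the whole quadratic form is positive definite.

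I expect two difficulties. The first is technical: the second-variation computation for $W$ under a $J$-deformation is considerably heavier than in the conformal case, since it requires the full variation of the adapted coframe, the connection and the curvature, and a careful integration by parts. The second, and conceptually decisive, is the positivity of every eigenvalue of $\mathcal{L}$, which is where the dimension --- equivalently, the automatic embeddability of CR structures in dimension $\ge 5$ --- enters: for $n\ge 2$ the symmetry of the deformation tensor together with the (now non-trivial) linearized integrability condition exclude precisely the analog of the low Fourier modes $m\le -4$ that are responsible for non-embeddability, and for the sign change, when $n=1$, so that the relevant estimate --- schematically, ``(eigenvalue) $=$ (a nonnegative spectral quantity) $+$ (a positive multiple of $n-1$)'' --- holds for all of them.
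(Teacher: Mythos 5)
Part (i) of your proposal is fine and is essentially the paper's argument: expand the CR Yamabe quotient at the critical point to second order, obtaining \eqref{eq:2nd-var-CR}, insert the constants \eqref{eq:const-spheres}, and conclude by the spectral gap of $-\Delta_b$ on the standard sphere with the eigenspace of linear functions giving the equality case.

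For part (ii), however, there is a genuine gap, and it sits exactly where the theorem lives. You posit that after integration by parts the second variation is $\int\langle E,\mathcal{L}E\rangle$ for some self-adjoint $\mathcal{L}$ built from $\Delta_b$, $\con{\partial}_b$, $\con{\partial}_b^{*}$ and curvature contractions, and then assert that its eigenvalues are positive for $n>1$, justified either by ``symmetry plus the linearized integrability condition exclude the analog of the modes $m\le-4$'' or by the schematic ``eigenvalue $=$ nonnegative quantity $+$ positive multiple of $(n-1)$''. Neither claim is established, and the second one misdescribes the actual mechanism: in the paper, the cancellations coming from criticality, zero torsion and the symmetries of Lemma \ref{l:E-symm} collapse the whole quadratic form to the pure Reeb-derivative expression \eqref{eq:ddot-intR}, so that on the Fourier space $\Gamma_m$ the coefficient is $n(m+4)$ --- exactly the same sign pattern as the three-dimensional formula \eqref{eq:dd3}, with no $(n-1)$-improvement of the eigenvalues whatsoever. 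The entire difference between $n=1$ and $n>1$ lies in which tensors $E$ are admissible: the decisive input is the Bland--Duchamp normal form (Theorem 4.1 in \cite{BlandDuchamp1}, together with the fact that $h_\sigma$ preserves the Fourier decomposition, see \cite{BlandDuchamp2}, and that $\con{\partial}_b$ and the musical isomorphism commute with the $T$-flow), which shows that for $n>1$ every infinitesimal deformation of the CR structure of the sphere has only non-negative Fourier modes, whence $n(m+4)\ge 4n>0$ on every admissible mode. That exclusion of negative modes is a substantial theorem about the deformation complex of the sphere; it is not a routine consequence of the relations $E_{\alpha\beta}=E_{\beta\alpha}$ and $E^{\con{\gamma}}_{\alpha,\beta}=E^{\con{\gamma}}_{\beta,\alpha}$, so as written your argument assumes precisely the point to be proven. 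A secondary omission: to read off the Fourier modes of a tensor $E$ and compute $E_{,0}=i\bigl(\tfrac m2+2\bigr)E^{(m)}$ one needs a $T$-invariant trivialization, which the paper obtains from Geller's ambient (non-independent) frame $Z_{jk},\theta_{jk}$ of \cite{Geller} and Lemma \ref{l:cov-der-Zij}; your generic ``$\mathcal{L}$ acts by a computable scalar on each irreducible'' leaves both this computation and the resulting scalar unspecified.
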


Even for $n > 1$ a formula similar to \eqref{eq:dd3} holds true, but since we would 
need to introduce extra notation, we chose to postpone it to the 
final section of the paper and state in the above theorem only its consequence. 

While in three dimensions it is a single complex function to determine a variation $E$ of the CR structure $J$, 
in higher dimensions we need to work on a vector bundle over $S^{2n+1}$. 
To carry over the calculation of first- and second-order variations we use a frame approach as in e.g. \cite{Lee2}, 
leading to formula \eqref{eq:ddot-intR} for the second variation of the integral of the  Webster curvature. However, to 
understand its sign it will be more practical for us to use the formalism in \cite{Geller}, that employs a basis of vector 
fields constructed from the ambient coordinates. Although this family  does not form a linearly 
independent system, it has the advantage of leading to constant-coefficient quantities, that 
are more suitable to be analyzed via Fourier modes. We then rely crucially on the results in 
\cite{BlandDuchamp1} and \cite{BlandDuchamp2}, where a parametrization of deformations of the 
CR structure of the sphere is performed via a suitable Banach manifold and via Fourier analysis.

\

The plan on the paper is as follows. In Section \ref{s:prel} we review some preliminary material  
that includes basic notions in pseudohermitian geometry: we derive in particular some 
first properties of deformations of the pseudohermitian structure. In Section \ref{s:def} we 
then derive useful formulas for the variation of interesting geometric quantities, and 
we derive in particular an expression for the second variation of the Webster curvature. 
In Section \ref{s:proofs} we prove our main theorems, with the completion of 
Theorem \ref{t:0} performed in the Appendix.

 \

 \begin{center}
 	{\bf Acknowledgments} 
 \end{center}

 	J.-H.C. (P.Y., resp.) is grateful to
 	Scuola Normale Superiore and Princeton University (Academia Sinica in Taiwan, resp.) for the
 	kind hospitality.  A.M. would like to thank
 	Academia Sinica in Taiwan and Princeton University for arranging some collaboration visits. 
 	J.-H.C. is supported by the project MOST 111-2115-M-001-005 of Ministry of Science and Technology and NCTS of Taiwan.
 	A.M. is supported by the project {\em Geometric  Problems with Loss of Compactness} from Scuola Normale Superiore.  He is also a member of GNAMPA as part of INdAM.  P. Y. acknowledges support from the NSF for the grant DMS 1509505.

 \section{Some preliminary facts} \label{s:prel}
 
 In this section we recall some useful basic material on pseudohermitian manifolds, 
 as well as some calculation concerning the variation of the contact form or of the CR structure.

 \
 
 About the forthcoming review material, we refer the reader to \cite{DragomirTomassini} or \cite{Lee2}. 
 We recall that 
 a \emph{CR manifold} is a real smooth manifold $M$ endowed with a complex sub-bundle $\ci{H}=T_{1,0}M$ of the complexified tangent bundle of $M$, $T^{\C}M$, such that $\ci{H}\cap\overline{\ci{H}} = \{0\}$
 and $[\ci{H},\ci{H}]\subseteq\ci{H}$. We will assume $M$ to be of hypersurface
 type, that is  $\dim M=2n+1$ and  $\dim_{\C}\ci{H}=n$.
 Let $H(M)$ denote the space $\g{Re}(\ci{H}\oplus\overline{\ci{H}})$. Then there exists a natural complex structure on $H(M)$ given by
 $$J(Z+\overline{Z})= i (Z-\overline{Z}).$$
 The CR structure is uniquely determined by $H(M)$ and $J$. For $H(M)$ and $J$  to generate a CR structure it is necessary that $\ci{H}$ is closed under Lie bracket operation. In three dimensions $\ci{H}$ is one-dimensional, so the condition $[\ci{H},\ci{H}]\subseteq\ci{H}$ is automatically 
 satisfied. 
 
 There exists a non-zero real differential form $\theta$ whose kernel at every point coincides with $H(M)$;
 it is unique up to scalar multiplication by a non-zero function.
 A triple $(M,J,\theta)$ as above is called a \emph{pseudohermitian structure}.
 On a pseudohermitian manifold, the Levi form is defined as 
 $$
 L_{\theta}(V,\con{W})=-id\theta(V,\overline{W})=i\theta([V,\overline{W}]).
 $$
 A CR manifold is said to be strictly \emph{pseudoconvex} (respectively, \emph{non-degenerate}) if it admits a positive definite (respectively, non-degenerate) Levi form. 
 Non-degeneracy is equivalent to the fact that $\theta$ is a contact form (see  Proposition 1.9  and formula (1.66) in \cite{DragomirTomassini}).
 In this case, there exists a unique vector field $T$ such that $i_Td\theta=0$ and $\theta(T)=1$. For example, if 
 $z_1, \dots, z_{n+1}$ are standard complex coordinates on $\C^{n+1}$, then on the unit sphere 
 $S^{2n+1}$ standard structures are given by 
 $$T= T_0 =\frac{i}{2}\sum_{\alpha = 1}^{n+1} \left( z_{\alfa}\frac{\de}{\de z_{\alfa}}-\con{z}_{\alfa}\frac{\de}{\de \con{z}_{\alfa}}\right);$$
 $$\theta= \theta_0 = \frac{i}{2}\sum_{\alpha = 1}^{n+1} \left( z_\alpha d\con{z}_\alpha-\con{z}_\alpha dz_\alpha \right).$$
 
 As mentioned in the introduction, classical examples of pseudohermitian manifolds are the Heisenberg 
 group or boundaries of pseudoconvex domains in complex spaces. 
 
 \

 On a nondegenerate pseudohermitian manifold one can introduce a connection, called the \emph{Tanaka-Webster connection}. To define it, we 
 recall some useful facts, mostly from \cite{Lee2}. If $\{T, Z_{\alpha}, Z_{\con{\alpha}}\}$ is a 
frame dual to $\{\theta, \theta^\alpha, \theta^{\con{\alpha}}\}$, we express the Levi form as 
$$
  L_\theta(f^\alpha Z_\alpha, g^{\con{\beta}} Z_{\con{\beta}}) = h_{\alpha \con{\beta}} f^\alpha g^{\con{\beta}}. 
$$
 The matrix $h_{\alpha \con{\beta}} = \delta_{\alpha}^\beta$  will be used in a standard way to raise and lower indices. The Webster 
 connection forms $\omega^\beta_\alpha$ and the torsion forms $\tau_\beta = 
 A_{\beta \alpha} \theta^\alpha$ are defined by the equations 
 \begin{equation}\label{eq:circ}
 	d \theta^\beta = \theta^\alpha \wedge \omega^\beta_\alpha + \theta \wedge \tau^\beta; 
 	\qquad \omega_{\alpha \con{\beta}} + \omega_{\con{\beta} \alpha} = d h_{\alpha \con{\beta}}; 
 	\qquad A_{\alpha \beta} = A_{\beta \alpha}. 
 \end{equation}
 Also, the \emph{curvature forms }
 $$
   \Pi^\beta_\alpha = d \omega^\beta_\alpha - \omega^\gamma_\alpha \wedge \omega^\beta_\gamma
 $$
 satisfy the \emph{structure equations }
 $$
   \Pi^\beta_\alpha = R^{\beta}_{\alpha \rho \con{\sigma}} \theta^\rho \wedge \theta^{\con{\sigma}} 
   + W^\beta_{\alpha \gamma} \theta^\gamma \wedge \theta - 
   W^\beta_{\alpha \con{\gamma}} \theta^{\con{\gamma}} \wedge \theta + i \theta_\alpha \wedge \tau^\beta 
   - i \tau_\alpha \wedge \theta^\beta. 
 $$
 The \emph{Ricci tensor} and the pseudohermitian scalar (or \emph{Webster}) curvature are defined by the 
 contractions 
 $$
   R_{\rho \con{\sigma}} = R^\alpha_{\alpha \rho \con{\sigma}}; \qquad \quad W = R^\alpha_\alpha. 
 $$
Recall that $(M,J,\theta)$ is {\em pseudo-Einstein} when the Ricci tensor is a 
scalar multiple of the Levi metric. A consequence of the Bianchi identities from \cite{Lee2} is 
the constancy of the Webster curvature for pseudo-Einstein structures with vanishing torsion.

\

The covariant differentiation is characterized by the formulas 
\begin{equation}\label{eq:der-cov}
	 \nabla Z_\alpha = \omega^\beta_\alpha \otimes Z_\beta; \qquad 
	\nabla Z_{\con{\alpha}} = \omega^{\con{\beta}}_{\con{\alpha}} \otimes Z_{\con{\beta}}; \qquad 
	\nabla \, T = 0. 
\end{equation}
For a tensor $S$ with components $S^\cdot_\cdot$ we will use the notation
$$
  S_{\cdot, \alpha}^{\cdot} := (\nabla_{Z_{\alpha}} S)_{\cdot}^{\cdot}; \qquad 
     S_{\cdot, \con{\alpha}}^{\cdot} := (\nabla_{Z_{\con{\alpha}}} S)_{\cdot}^{\cdot}. 
$$
We also have the following commutation relations for second-order covariant derivatives 
of functions $u$ and $(1,0)$-forms $\sigma = \sigma_\alpha \theta^\alpha$, see e.g. Lemma 2.3 in \cite{Lee2}: 
$$
u_{,\alpha \con{\beta}} - u_{,\con{\beta} \alpha} = i h_{\alpha \con{\beta}} u_{,0}; \qquad 
u_{,\alpha \beta} = u_{,\beta \alpha}; \qquad u_{,0 \alpha} - u_{,\alpha 0} = A_{\alpha \beta} u^{,\beta}; 
$$
$$
 \sigma_{\alpha, \beta \gamma} - \sigma_{\alpha,\gamma \beta} = i A_{\alpha \gamma} \sigma_\beta 
 - i A_{\alpha \beta} \sigma_\gamma; 
 $$
 $$ 
 \sigma_{\alpha, \con{\beta} \con{\gamma}} - \sigma_{\alpha,\con{\gamma} \con{\beta}} = i
 h_{\alpha \con{\beta}} A_{\con{\gamma} \con{\rho}} \sigma^{\con{\rho}} 
 - i h_{\alpha \con{\gamma}} A_{\con{\beta} \con{\rho}} \sigma^{\con{\rho}}; 
$$
$$
\sigma_{\alpha,\beta \con{\gamma}} - \sigma_{\alpha, \con{\gamma} \beta} = i h_{\beta \con{\gamma}} 
\sigma_{\alpha,0} + R^\rho_{\alpha\beta \con{\gamma}} \sigma_\rho;  
$$
$$
 \sigma_{\alpha,0 {\beta}} - \sigma_{\alpha, \beta 0} = \sigma_{\alpha ,}^{{\gamma}} A_{\gamma \beta} - 
 \sigma_\gamma A_{\alpha \beta,}^\gamma; \qquad 
 \sigma_{\alpha, 0 \beta} - \sigma_{\alpha, \beta 0} = \sigma_{\alpha, \gamma} A^\gamma_\beta 
 + \sigma_\gamma A^\gamma_{\beta, \alpha}.  
$$
As a consequence of Bianchi's identities, see e.g. Lemma 2.2 in \cite{Lee2}, we have in 
particular that 
\begin{equation}\label{eq:Bianchi}
	A_{\alpha \beta, \gamma} = A_{\alpha \gamma, \beta} \quad \hbox{ for all indices } 
 \alpha, \beta, \gamma. 
\end{equation}

\

The sub-Laplacian of a scalar function $u \in C^\infty(M)$ is then defined as (see e.g. formula (4.10) in \cite{Lee1})
$$
  \Delta_b u = u^\alpha_{, \alpha} + u_{, \con{\alpha}}^{\con{\alpha}}. 
$$
The following result will be useful to derive tensorial identities because of the 
vanishing of some terms in their expressions.  

\begin{lem} (\cite{Lee2}) \label{l:omegazero}
	For any point $p \in M$, there exists a neighborhood $U_p$ and an admissible 
	co-frame $(\theta^\alpha)_\alpha$ such that $\omega^\beta_\alpha = 0$ at $p$. 
	\end{lem}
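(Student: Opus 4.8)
The plan is to start from an arbitrary admissible co-frame near $p$ and then perform a pointwise-unitary change of co-frame that annihilates all the connection forms at the single point $p$; this is the pseudohermitian analogue of passing to a geodesic (normal) frame. Concretely, I would fix local coordinates $(x^1,\dots,x^{2n+1})$ centered at $p$, together with some admissible co-frame $(\theta^\alpha)$ on a neighbourhood of $p$, dual frame $\{T, Z_\alpha, Z_{\con\alpha}\}$, and connection forms $\omega^\beta_\alpha$ as in \eqref{eq:circ}. Since $\theta$ is held fixed, the Tanaka--Webster connection $\nabla$ is intrinsic (it is uniquely determined by $\theta$), so under a change of admissible co-frame $\tilde\theta^\alpha = u^\alpha_\beta\theta^\beta$, equivalently $\tilde Z_\alpha = (u^{-1})^\beta_\alpha Z_\beta$, the forms $\omega^\beta_\alpha$ transform like the matrix of a linear connection; with $\omega = (\omega^\beta_\alpha)$ and $u = (u^\alpha_\beta)$ regarded as matrices of one-forms, resp.\ functions, this reads $\tilde\omega = u\,\omega\,u^{-1} - (du)\,u^{-1}$. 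Hence it suffices to exhibit a $u$ with $u(p) = \mathrm{Id}$ and $(du)(p) = \omega(p)$, subject to the requirement that $u$ be valued in the group $U(n)$ preserving the normalization $h_{\alpha\con\beta} = \delta_{\alpha\beta}$ of the Levi form, for this is exactly what keeps the new co-frame admissible.

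The crux is that this unitarity constraint is compatible with the prescription of $(du)(p)$. Since $h_{\alpha\con\beta}$ is constant, the second identity in \eqref{eq:circ} gives $\omega_{\alpha\con\beta} + \omega_{\con\beta\alpha} = 0$; lowering indices with $h_{\alpha\con\beta} = \delta_{\alpha\beta}$ and using that $\nabla$ is real, so $\omega^{\con\alpha}_{\con\beta} = \overline{\omega^\alpha_\beta}$, this says precisely that the matrix $\omega$ is skew-Hermitian, i.e.\ $\omega(p) \in \g{u}(n) \otimes T_p^*M$. Writing $\omega(p) = \sum_\mu M^{(\mu)}\,dx^\mu$ with $M^{(\mu)} \in \g{u}(n)$, I would simply take $u(x) := \exp\!\big(\textstyle\sum_\mu M^{(\mu)}\,x^\mu\big)$, which is defined and $U(n)$-valued on a neighbourhood $U_p$ of $p$, has $u(p) = \mathrm{Id}$, and satisfies $(du)(p) = \sum_\mu M^{(\mu)}\,dx^\mu = \omega(p)$.

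It then remains to confirm that $\tilde\theta^\alpha := u^\alpha_\beta\theta^\beta$ is a genuine admissible co-frame on $U_p$: because $u$ takes values in $U(n)$ at every point, the dual frame $\tilde Z_\alpha$ is Levi-orthonormal, hence admissible for the given $\theta$; and, the Tanaka--Webster connection being determined by $\theta$ alone, relative to $\tilde\theta^\alpha$ it again satisfies \eqref{eq:circ}, with torsion still of the form $\tilde\tau^\beta = \tilde A_{\beta\alpha}\tilde\theta^\alpha$. Evaluating the transformation law at $p$ yields $\tilde\omega^\beta_\alpha(p) = \omega^\beta_\alpha(p) - \omega^\beta_\alpha(p) = 0$, which is the claim. (An equivalent route, bypassing the transformation law, is to radially parallel-transport via $\nabla$ a Levi-orthonormal frame at $p$ along the $\nabla$-geodesics issuing from $p$; since $\nabla$ restricts to a metric connection for the Levi form on $\ci{H}$, the transported frame remains admissible, and in this radial gauge the $\omega^\beta_\alpha$ all vanish at $p$.)

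I do not anticipate a genuine obstacle here. The only matters requiring care are the exact index bookkeeping in the connection transformation law and the observation --- which costs nothing --- that the skew-Hermitian constraint forced by $dh_{\alpha\con\beta} = 0$ is precisely what allows $u$ to be chosen in $U(n)$ with the prescribed first-order behaviour at $p$.
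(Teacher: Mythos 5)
Your proposal is correct: the unitary-gauge (normal frame) argument — using that constancy of $h_{\alpha\con{\beta}}$ forces the connection matrix $(\omega^\beta_\alpha)$ to be skew-Hermitian, so a $U(n)$-valued $u$ with $u(p)=\mathrm{Id}$ and $(du)(p)=\omega(p)$ exists and the transformed frame stays admissible — is exactly the standard proof behind this statement, which the paper itself does not prove but quotes from Lee's paper. No gaps; the only points needing care (the transformation law $\tilde\omega = u\,\omega\,u^{-1}-(du)\,u^{-1}$ and the compatibility of the unitarity constraint with prescribing $(du)(p)$) are both handled correctly, and your alternative via radial parallel transport is equally valid.
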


 \

We  recall next the transformation law of the Webster curvature under conformal 
changes of contact form, see e.g. \cite{JerisonLee}. If one writes $Q = 2n+2$ 
for the \emph{homogeneous dimension} of $(M,J,\theta)$, and $\tilde{\theta} = u^{\frac{4}{Q-2}} \theta$, 
then the Webster curvature $W = W_\theta$ transforms as 
\begin{equation}\label{eq:conf}
	 - \left(2 + \frac 2n \right) \Delta_b u + W_\theta u = W_{\tilde{\theta}} 
	u^{\frac{Q+2}{Q-2}}. 
\end{equation}

 It will then be  useful to recall some spectral properties of the sub-Laplace operator 
 on spheres, see e.g. \cite{Folland} or \cite{Geller}.

 \begin{pro}
  Let $H_{p,q,n}$ denote the restriction to $S^{2n+1}$ of the homogeneous 
  complex harmonic polynomials of degree $p+q$, where $p$ is the 
  holomorphic homogeneity and $q$ the antiholomorphic one. Then one has
  $$
    - \Delta_b \phi = \lambda_{p,q,n} \phi \; \; \hbox{ for all } \phi \in H_{p,q,n}; 
    \qquad \lambda_{p,q,n} = pq + \frac{n}{2} (p+q). 
  $$ 
  Moreover, let 
  $$
  \Gamma_m = \left\{ u \in C^\infty(S^{2n+1}) \; | \; u(e^{i \theta} z_1, \dots e^{i \theta} z_n)  = e^{i m \theta} u(z_1, \dots, z_n) \right\}.
  $$
  Then one has 
  $$
  T \phi = i \frac{m}{2}  \phi \quad \hbox{ for all } \quad \phi \in \Gamma_m. 
  $$ 
 \end{pro}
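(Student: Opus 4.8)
The plan is to deduce both assertions from two classical inputs: the bigraded spherical-harmonic decomposition
$$ L^2(S^{2n+1}) = \bigoplus_{p,q\ge 0} H_{p,q,n}, $$
orthogonal, with each $H_{p,q,n}$ finite dimensional and $U(n+1)$-irreducible (see \cite{Folland} or \cite{Geller}), together with the identification of the sub-Laplacian $\Delta_b$ in terms of the round Laplacian $\Delta_{S^{2n+1}}$ and the Reeb field $T_0$. Both $\Delta_b$ and $T_0$ commute with the $U(n+1)$-action that preserves $(J_0,\theta_0)$, so by Schur's lemma each acts on every $H_{p,q,n}$ by a scalar; it therefore suffices to compute those scalars.

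The Reeb scalar is immediate. The vector field $T_0 = \frac{i}{2}\sum_\alpha(z_\alpha\de_{z_\alpha} - \con z_\alpha\de_{\con z_\alpha})$ is one half of the infinitesimal generator of the circle action $z\mapsto e^{i\vartheta}z$ on $S^{2n+1}$; hence for $u\in\Gamma_m$, differentiating the identity $u(e^{i\vartheta}z) = e^{im\vartheta}u(z)$ at $\vartheta = 0$ gives $2T_0 u = i m u$, i.e. $T_0 u = \tfrac{im}{2}u$. This proves the second part of the Proposition, and since a monomial $z^\alpha\con z^\beta$ with $|\alpha| = p$ and $|\beta| = q$ belongs to $\Gamma_{p-q}$, we get $H_{p,q,n}\subseteq\Gamma_{p-q}$ and therefore $T_0^2 = -\tfrac14(p-q)^2\,\mathrm{Id}$ on $H_{p,q,n}$.

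For $\Delta_b$ I would use the operator identity on $S^{2n+1}$
$$ \Delta_{S^{2n+1}} = 4\,\Delta_b + 4\,T_0^2, $$
equivalently the fact that the round metric $g_{S^{2n+1}}$ equals $\theta_0\otimes\theta_0 + \tfrac12\,d\theta_0(\cdot,J_0\cdot)$, i.e. is adapted to $(J_0,\theta_0)$ up to the normalization built into $\theta_0$. This can be checked by evaluating both metrics at $(1,0,\dots,0)$ and invoking homogeneity of the standard structure, exactly as in the evaluation of the volume forms recalled above, after which the Laplacian splits along the Killing direction $T_0$ into a multiple of $T_0^2$ plus a multiple of $\Delta_b$; the precise constant is then pinned down by a short explicit computation of $\Delta_b$ on the linear coordinate functions $z_1,\dots,z_{n+1}$ (which lie in $\Gamma_1$ and satisfy $-\Delta_{S^{2n+1}}z_j = (2n+1)z_j$). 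Granting the identity, and recalling that $H_{p,q,n}$ consists of restrictions of harmonic polynomials homogeneous of total degree $p+q$, the classical spectrum of the Laplacian of $S^{N-1}\subset\R^{N}$ with $N = 2n+2$ gives $-\Delta_{S^{2n+1}}\phi = (p+q)(p+q+2n)\phi$ for $\phi\in H_{p,q,n}$, whence
$$ -\Delta_b\phi = -\tfrac14\Delta_{S^{2n+1}}\phi + T_0^2\phi = \tfrac14\big[(p+q)(p+q+2n) - (p-q)^2\big]\phi = \big(pq + \tfrac n2(p+q)\big)\phi, $$
which is the claimed value $\lambda_{p,q,n}$.

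The only genuinely delicate point is this last constant: the factors of $\tfrac12$ in $\theta_0$ propagate through the Levi form $h_{\alpha\con\beta} = \delta_{\alpha\beta}$ and through the index raising implicit in $\Delta_b u = u^\alpha_{,\alpha} + u_{,\con\alpha}^{\con\alpha}$, so one must keep careful track of the normalization and verify that it is $g_{S^{2n+1}}$, and not a fixed multiple of it, that is adapted to $\theta_0$. A route that bypasses this bookkeeping altogether is to compute $\Delta_b$ directly on the monomials $z^\alpha\con z^\beta$, using the globally defined (though linearly dependent) frame $Z_{jk} = \con z_j\de_{z_k} - \con z_k\de_{z_j}$ of $T_{1,0}S^{2n+1}$ and its conjugate, with Lemma \ref{l:omegazero} used to kill the connection terms at the evaluation point, in the spirit of \cite{Geller} and \cite{Folland}; these first-order operators shift the holomorphic and antiholomorphic degrees in a controlled way, and a short combinatorial count reproduces $\lambda_{p,q,n}$. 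In either approach, once the operator identity is in hand the Proposition is immediate.
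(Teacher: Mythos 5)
The paper does not prove this Proposition at all: it is recalled as background with a citation to \cite{Folland} and \cite{Geller}, so your argument is a reconstruction rather than a parallel of anything in the text. Its skeleton is the right one: $\Delta_b$ and $T$ commute with the $U(n+1)$-action, so by Schur's lemma they act by scalars on each $H_{p,q,n}$; the computation $2T_0u=imu$ for $u\in\Gamma_m$ is complete and correct, as is the inclusion $H_{p,q,n}\subseteq\Gamma_{p-q}$, the round eigenvalue $(p+q)(p+q+2n)$, and the algebra $\frac14\bigl[(p+q)(p+q+2n)-(p-q)^2\bigr]=pq+\frac n2(p+q)$.

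The gap is exactly at the point you yourself flag as delicate, and it is not a formality: the identity $\Delta_{S^{2n+1}}=4\Delta_b+4T_0^2$ is neither proved nor correctly pinned down by the justification you offer, and the "short explicit computation on linear functions" is left undone. If you carry it out with the paper's displayed data, it contradicts your identity. Indeed, with $\theta_0=\frac i2\sum_\alpha(z_\alpha d\con{z}_\alpha-\con{z}_\alpha dz_\alpha)$ one has $\theta_0(T_0)=\frac12$, so the displayed $T_0$ is actually the Reeb field of $2\theta_0$; and taking $L_{\theta_0}=-i\,d\theta_0$ and $h_{\alpha\con{\beta}}=\delta_{\alpha\beta}$ literally, the fields $Z_{1k}=\con{z}_1\de_{z_k}-\con{z}_k\de_{z_1}$, $k=2,\dots,n+1$, form an $L_{\theta_0}$-unitary frame at $(1,0,\dots,0)$ with vanishing connection terms there (Lemma \ref{l:cov-der-Zij}), and a one-line computation gives $\Delta_b z_1=\sum_k \con{Z}_{1k}Z_{1k}z_1=-n\,z_1$ at that point, hence everywhere by equivariance. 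Since $-4T_0^2z_1=z_1$ and $-\Delta_{S^{2n+1}}z_1=(2n+1)z_1$, the splitting your (correct) metric identity $g_{S^{2n+1}}=\theta_0\otimes\theta_0+\frac12 d\theta_0(\cdot,J_0\cdot)$ actually produces is $\Delta_{S^{2n+1}}=2\Delta_b+4T_0^2$, not $4\Delta_b+4T_0^2$, and your argument run consistently would yield $\lambda_{p,q,n}=2pq+n(p+q)$, twice the stated value. The stated constants are those of the Folland--Geller normalization, i.e.\ the Levi form of $2\theta_0$ (equivalently, the normalization for which the displayed $T_0$ really is the Reeb field); so to prove the Proposition as stated you must first fix that normalization explicitly and then do the linear-function (or Geller-frame monomial) computation in it — "granting the identity" is precisely where the whole factor-of-two content of the statement sits.
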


 \

 We next consider a deformation of the CR structure, keeping the 
 contact form fixed: we introduce some notation for this purpose and derive some preliminary properties. 
%
%
%
%

 Given the contact bundle $\xi$, consider a smooth family $t \mapsto J_{(t)}$ of 
 CR structures on $\xi$. Then, for all values of $t$, 
 $J_{(t)}:\xi\to\xi$ satisfies $J_{(t)}^2=-Id$. Take a basis of eigenvectors $(Z_{\alpha(t)})_\alpha$ such that $J_{(t)}Z_{\alpha(t)}=iZ_{\alpha(t)}$, which 
 implies for the conjugate vector fields  $J_{(t)}\con{Z}_{\alpha(t)}=-i\con{Z}_{\alpha(t)}$. In this way $J_{(t)}$ writes as 
 $$
 J_{(t)} = i\theta^{\alpha}_{(t)}\otimes Z_{\alpha(t)}-i\theta^{\con{\alpha}}_{(t)}\otimes Z_{\con{\alpha}(t)}.
 $$ 
 We begin with the following simple result. 
 
 \begin{lem} \label{l:def-E}
 	 Setting $\dot{J} := \dot{J}_{(t)} = \frac{d}{dt} J_{(t)}$,  one has the simplified relation 
 	$$
 	\dot{J}=2E = 2{E_{\alpha}}^{\con{\beta}}\theta^{\alpha}\otimes Z_{\con{\beta}} + 
 	\textrm{\emph{conj.}}
 	$$ 
 \end{lem}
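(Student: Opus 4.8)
The plan is to differentiate the defining relation $J_{(t)}^2 = -Id$ and to use the eigenbundle splitting $\xi\otimes\C = \ci{H}\oplus\con{\ci{H}}$. Since every $J_{(t)}$ is a smooth endomorphism of the \emph{fixed} real bundle $\xi = H(M)$, the derivative $\dot J$ is a well-defined section of $\textrm{End}(\xi)$, independent of any choice of moving frame; we may expand it in the co-frame $\{\theta^\alpha,\theta^{\con{\alpha}}\}$ dual to $\{Z_\alpha, Z_{\con{\alpha}}\}$ at the value of $t$ under consideration. A priori this expansion has four blocks, of types $\theta^\alpha\otimes Z_\beta$, $\theta^\alpha\otimes Z_{\con{\beta}}$, $\theta^{\con{\alpha}}\otimes Z_\beta$ and $\theta^{\con{\alpha}}\otimes Z_{\con{\beta}}$; the content of the lemma is that the first and last vanish, and that the middle two are complex conjugates of one another.

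First I would differentiate $J_{(t)}\circ J_{(t)} = -Id$ in $t$, obtaining the anticommutation identity $\dot J\circ J + J\circ\dot J = 0$. Evaluating it on a frame vector $Z_\alpha$, for which $J Z_\alpha = i Z_\alpha$, yields $J(\dot J Z_\alpha) = -i\,\dot J Z_\alpha$; hence $\dot J Z_\alpha$ belongs to the $(-i)$-eigenbundle $\con{\ci{H}} = \textrm{span}\{Z_{\con{\beta}}\}$, so $\dot J Z_\alpha = 2\,{E_\alpha}^{\con{\beta}} Z_{\con{\beta}}$ for suitable coefficients ${E_\alpha}^{\con{\beta}}$, the factor $2$ being a normalization chosen to agree with the formulas used later. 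Applying the same identity to $Z_{\con{\alpha}}$ shows that $\dot J Z_{\con{\alpha}}$ lies in $\ci{H}$, i.e. is a combination of the $Z_\beta$ only. This already eliminates the $\theta^\alpha\otimes Z_\beta$ and $\theta^{\con{\alpha}}\otimes Z_{\con{\beta}}$ blocks and shows that $\dot J$ interchanges $\ci{H}$ and $\con{\ci{H}}$.

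To obtain the ``conj.'' clause I would use that each $J_{(t)}$ is real: it is the complexification of an endomorphism of the real bundle $H(M)$, so it commutes with the conjugation of $\xi\otimes\C$, and therefore so does $\dot J$. Consequently $\dot J Z_{\con{\alpha}} = \dot J\,\con{Z_\alpha} = \con{\dot J Z_\alpha} = 2\,\con{{E_\alpha}^{\con{\beta}}}\,Z_\beta$, which identifies the $\theta^{\con{\alpha}}\otimes Z_\beta$ block with the conjugate of the first one. Collecting terms gives $\dot J = 2E = 2{E_\alpha}^{\con{\beta}}\theta^\alpha\otimes Z_{\con{\beta}} + \textrm{conj.}$, as claimed.

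The argument is purely formal, so I do not anticipate a genuine obstacle; the one point deserving care is that the eigenframe $Z_{\alpha(t)}$ itself moves with $t$. For this reason I would compute with $\dot J$ intrinsically as the velocity of a curve in $\textrm{End}(\xi)$ — where the anticommutation relation is immediate — rather than differentiating the frame expansion $J_{(t)} = i\theta^\alpha_{(t)}\otimes Z_{\alpha(t)} - i\theta^{\con{\alpha}}_{(t)}\otimes Z_{\con{\alpha}(t)}$ term by term; the latter route produces additional $\dot\theta^\alpha\otimes Z_\alpha$- and $\theta^\alpha\otimes\dot Z_\alpha$-type contributions that reorganize into the stated shape only after invoking $J_{(t)}Z_{\alpha(t)} = i Z_{\alpha(t)}$, so it is cleaner to avoid it.
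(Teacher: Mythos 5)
Your proposal is correct and follows essentially the same route as the paper: differentiate $J_{(t)}^2=-Id$ to get $\dot J\circ J+J\circ\dot J=0$, and conclude from the $\pm i$ eigenbundle decomposition that the blocks ${E_\alpha}^\beta$ and ${E_{\con\alpha}}^{\con\beta}$ vanish (the paper phrases this as a $2\times 2$ block-matrix identity, you phrase it as an eigenspace statement, which is the same computation). Your explicit justification of the ``conj.'' clause via the reality of $J$ is a point the paper leaves implicit, but it is not a different method.
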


 \begin{proof} By the above formula we get 
 	$\dot{J}(Z_{\alpha})= 2{E_{\alpha}}^{\con{\beta}}Z_{\con{\beta}}+2{E_{\alpha}}^{\beta}Z_{\beta}$.
 	Differentiate the relation $J_{(t)}^2=-Id$ with respect to $t$ (at $t=0$) to obtain:
 	$$\dot{J}\circ J + J\circ\dot{J}=0.$$
Expressing this relation with respect to the basis $(Z_\alpha)_\alpha$ and $({Z}_{\con{\alpha}})_\alpha$, 
we obtain 
 	$$ \left(
 	\begin{array}{cc}
 		{E_{\alpha}}^{\beta} & {E_{\alpha}}^{\con{\beta}} \\
 		{E_{\con{\alpha}}}^{\beta} & {E_{\con{\alpha}}}^{\con{\beta}}
 	\end{array} \right)
 	\left(
 	\begin{array}{cc}
 		i & 0 \\
 		0 & -i
 	\end{array} \right)
 	+
 	\left(
 	\begin{array}{cc}
 		i & 0 \\
 		0 & -i
 	\end{array} \right)
 	\left(
 	\begin{array}{cc}
 		{E_{\alpha}}^{\beta} & {E_{\alpha}}^{\con{\beta}} \\
 		{E_{\con{\alpha}}}^{\beta} & {E_{\con{\alpha}}}^{\con{\beta}}
 	\end{array} \right)
 	=0, $$
 	which implies ${E_{\alpha}}^{\beta}={E_{\con{\alpha}}}^{\con{\beta}}=0$, as claimed. 
 \end{proof}
 
We consider  next the variation of the basis vector fields with respect to $t$. 

\begin{lem} \label{l:zdot}
	Let us write the derivative of the unitary frame $(Z_\alpha)_\alpha$ and its dual forms $(\theta^\alpha)_\alpha$ as  
	$$\dot{Z}_{\alpha}= {F_{\alpha}}^{\beta}Z_{\beta} + {G_{\alpha}}^{\con{\beta}}Z_{\con{\beta}}; \qquad \quad 
	\dot{\theta}^{\con{\gamma}} = i  E^{\con{\gamma}}_l \theta^l - F^{\con{\gamma}}_{\con{l}} \theta^{\con{l}},
	$$ 
	Then we can assume that ${F_{\alpha}}^{\beta}\in\R$, 
	and that there holds 
	\begin{equation} \label{eq:star}
		{G_{\alpha}}^{\con{\beta}}=-i {E_{\alpha}}^{\con{\beta}}; \qquad 
		{F_{\alpha}}^{\beta}+{F_{\beta}}^{\alpha}=0.
	\end{equation}
	Moreover, at $t = 0$ we can take ${F_\beta}^\alpha = 0$. 
\end{lem}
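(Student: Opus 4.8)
\emph{Overview.} All three assertions come from differentiating in $t$, at a fixed value of the parameter, the relations that define the adapted unitary coframe $(\theta,\theta^{\alpha}_{(t)},\theta^{\con\alpha}_{(t)})$ dual to $(T,Z_{\alpha(t)},Z_{\con\alpha(t)})$, and then exploiting the residual gauge freedom in the choice of that coframe. (For $t$ in a small interval the deformed structures stay strictly pseudoconvex, so adapted unitary frames exist.) \emph{Step 1 (the identity $G_{\alpha}{}^{\con\beta}=-iE_{\alpha}{}^{\con\beta}$).} Differentiate $J_{(t)}Z_{\alpha(t)}=iZ_{\alpha(t)}$ to get $\dot J\,Z_\alpha+J\,\dot Z_\alpha=i\,\dot Z_\alpha$. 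Insert $\dot J=2E$ from Lemma~\ref{l:def-E}, which gives $\dot J\,Z_\alpha=2E_\alpha{}^{\con\beta}Z_{\con\beta}$, together with $\dot Z_\alpha=F_\alpha{}^\beta Z_\beta+G_\alpha{}^{\con\beta}Z_{\con\beta}$ and $JZ_\beta=iZ_\beta$, $JZ_{\con\beta}=-iZ_{\con\beta}$. The $Z_\beta$-components cancel identically, while the $Z_{\con\beta}$-components give $2E_\alpha{}^{\con\beta}-iG_\alpha{}^{\con\beta}=iG_\alpha{}^{\con\beta}$, i.e. $G_\alpha{}^{\con\beta}=-iE_\alpha{}^{\con\beta}$.

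\emph{Step 2 ($F$ is skew-Hermitian).} Each frame $(Z_{\alpha(t)})$ is chosen unitary for the Levi form of the \emph{fixed} contact form, $L_\theta(Z_{\alpha(t)},\con Z_{\beta(t)})=h_{\alpha\con\beta}=\delta_\alpha^\beta$. Since $J_{(t)}$ is integrable, $[\con{\ci H}_{(t)},\con{\ci H}_{(t)}]\subseteq\con{\ci H}_{(t)}\subseteq\ker\theta$, so $d\theta$ has no $(0,2)$- or $(2,0)$-part for $J_{(t)}$ and hence $L_\theta(Z_{\con\gamma(t)},\con Z_{\beta(t)})=-i\,d\theta(\con Z_\gamma,\con Z_\beta)=0$, and likewise $L_\theta(Z_{\gamma(t)},Z_{\beta(t)})=0$. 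Differentiating the unitarity relation and substituting $\dot Z_\alpha=F_\alpha{}^\gamma Z_\gamma+G_\alpha{}^{\con\gamma}Z_{\con\gamma}$ (using these vanishings to drop the $G$-terms) yields $F_{\alpha\con\beta}+\con{F_{\beta\con\alpha}}=0$; that is, the matrix $F_\alpha{}^\beta$ is skew-Hermitian.

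\emph{Step 3 (normalizing $F$: the crux).} The adapted unitary frame is determined only up to a smooth $U(n)$-valued function $t\mapsto U(t)$: replacing $Z_{\alpha(t)}$ by $\tilde Z_{\alpha(t)}=U_\alpha{}^\beta(t)Z_{\beta(t)}$ again gives an adapted unitary frame, and a short computation shows $F$ transforms like the connection form of a $\g{u}(n)$-connection along the $t$-line, $\tilde F=\dot U\,U^{-1}+U F U^{-1}$. Solving the linear matrix ODE $\dot U(t)=-U(t)F(t)$ with $U(0)=\mathrm{Id}$ produces a solution that stays unitary precisely because $F+F^{*}=0$ (differentiate $UU^{*}$), and for the corresponding frame $\tilde F\equiv 0$; in particular one may assume $F_\alpha{}^\beta\in\R$, whence by Step~2 also $F_\alpha{}^\beta+F_\beta{}^\alpha=0$, and at $t=0$ one may further take $F_\beta{}^\alpha=0$ (if a less rigid frame is preferred one removes only the Hermitian part of $F$ globally and then kills $F(0)$ by an $O(n)$-adjustment with $\dot U(0)=-F(0)\in\g{so}(n)$, which does not spoil the realness of $F$). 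This gauge-theoretic step is the only non-mechanical point; everything else is bookkeeping of conjugations and of index raising/lowering with $h_{\alpha\con\beta}=\delta_\alpha^\beta$.

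\emph{Step 4 (the formula for $\dot\theta^{\con\gamma}$).} Since $\theta$ is fixed, so is $T$ (determined by $i_Td\theta=0$, $\theta(T)=1$), hence differentiating $\theta^{\con\gamma}_{(t)}(T)=0$ gives that $\dot\theta^{\con\gamma}$ has no $\theta$-component. Differentiating $\theta^{\con\gamma}_{(t)}(Z_{\beta(t)})=0$ and using $\dot Z_\beta=F_\beta{}^\delta Z_\delta+G_\beta{}^{\con\delta}Z_{\con\delta}$ with $G_\beta{}^{\con\gamma}=-iE_\beta{}^{\con\gamma}$ gives $\dot\theta^{\con\gamma}(Z_\beta)=-G_\beta{}^{\con\gamma}=iE_\beta{}^{\con\gamma}$; differentiating $\theta^{\con\gamma}_{(t)}(Z_{\con\beta(t)})=\delta^\gamma_\beta$ gives $\dot\theta^{\con\gamma}(Z_{\con\beta})=-\con{F_\beta{}^\gamma}=-F^{\con\gamma}_{\con\beta}$. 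Reassembling in the coframe, $\dot\theta^{\con\gamma}=iE^{\con\gamma}_{l}\theta^{l}-F^{\con\gamma}_{\con l}\theta^{\con l}$, as claimed.
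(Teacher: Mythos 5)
Your proposal is correct, and Steps 1, 2 and 4 mirror the paper's argument: the identity ${G_{\alpha}}^{\con{\beta}}=-i{E_{\alpha}}^{\con{\beta}}$ comes from differentiating $J_{(t)}Z_{\alpha(t)}=iZ_{\alpha(t)}$, the skew-Hermitian property of $F$ from differentiating the Levi-form normalization (the paper phrases this via $d\theta=i\sum\theta^{\alpha}_{(t)}\wedge\theta^{\con{\alpha}}_{(t)}$, you via $L_\theta$ plus integrability — same substance), and the formula for $\dot{\theta}^{\con{\gamma}}$ from differentiating the duality relations, which the paper carries out in the proof of the subsequent lemma. The only genuine divergence is Step 3: the paper makes $F$ real by an infinitesimal unitary rotation (adding $\con{F}$, which is skew-Hermitian) and then kills $F$ at $t=0$ with the frame $(e^{-t\mathfrak{F}})^\beta_\alpha Z_\beta$, whereas you integrate the full gauge ODE $\dot{U}=-UF$ in $U(n)$ (legitimate, since $F$ skew-Hermitian keeps $U$ unitary) and obtain $F\equiv 0$ for all $t$; this is a stronger normalization that subsumes both of the paper's steps, and your exponential $O(n)$-adjustment in the aside is exactly the paper's $e^{-t\mathfrak{F}}$ device. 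One minor slip, confined to that aside: "removing the Hermitian part of $F$" should be removing its imaginary part (equivalently adding $\con{F}$, as the paper does), since $F$ is already skew-Hermitian; this does not affect the argument.
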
 
 
 \begin{proof}
 	For notational simplicity we will often write $F_\beta^\alpha$ instead of ${F_\beta}^\alpha$, 
 	$E_{\alpha}^{\con{\beta}}$ instead of ${E_{\alpha}}^{\con{\beta}}$, etc.. 
 	
 	We  have 
 	$d\theta= i\sum\theta^{\alpha}_{(t)}\wedge \theta^{\con{\alpha}}_{(t)}$ (from $h_{\alpha\con{\beta}}= \delta_{\alpha}^{\beta}$), 
 	compare also to Lemma 2.1 in \cite{ChengLee}, so
 	$$-2id\theta(Z_{\alpha(t)}\wedge Z_{\con{\beta}(t)})
 	= \delta_{\alpha}^{\beta}.$$
 	Differentiating this relation in $t$, we get 
 	\begin{align} \label{eq:circ}
 		0 & =-2i d\theta(\dot{Z}_{\alpha}\wedge Z_{\con{\beta}})-2id\theta(Z_{\alpha}\wedge \dot{Z}_{\con{\beta}}) \nonumber \\ 
 		& =-2i d\theta(({F_{\alpha}}^{\gamma}Z_{\gamma} + {G_{\alpha}}^{\con{\gamma}}Z_{\con{\gamma}})\wedge Z_{\con{\beta}})-2id\theta(Z_{\alpha}\wedge (\con{{F_{\beta}}^{\gamma}}Z_{\con{\gamma}} + \con{{G_{\beta}}^{\con{\gamma}}}Z_{\gamma}))  \nonumber
 		\\ & = -2i {F_{\alpha}}^{\beta} -2i \con{{F_{\beta}}^{\alpha}}. 
 	\end{align}

 	On the other hand, we can always compose the frame $(Z_\alpha)_\alpha$ with an element of 
 	$SU(n)$, which infinitesimally means adding to $F_\alpha^\beta$ a matrix $B_\alpha^\beta$ such that $B_\alpha^\beta = - \con{B}_\beta^\alpha$. 
 	We can choose for example to add $\con{F}_\alpha^\beta$, which satisfies this property by the above relation \eqref{eq:circ}: 
 	this means that we can take $F$ to be real, and implies the second relation in \eqref{eq:star}.

 	To get the first one, differentiate $J_{(t)}Z_{\alpha(t)}=iZ_{\alpha(t)}$ with respect to $t$, to find 
 \begin{align*}
 	 0 & = \dot{J}Z_{\alpha} +J\dot{Z}_{\alpha} -i\dot{Z}_{\alpha}  \\ 
 	 & = 2{E_{\alpha}}^{\con{\beta}}Z_{\con{\beta}} + {F_{\alpha}}^{\beta}iZ_{\beta} + {G_{\alpha}}^{\con{\beta}}(-i)Z_{\con{\beta}}-i({F_{\alpha}}^{\beta}Z_{\beta} + {G_{\alpha}}^{\con{\beta}}Z_{\con{\beta}})
 	 \\ & = 2{E_{\alpha}}^{\con{\beta}}Z_{\con{\beta}} -2i{G_{\alpha}}^{\con{\beta}}Z_{\con{\beta}}, 
 \end{align*}
 	so ${G_{\alpha}}^{\con{\beta}}=-i{E_{\alpha}}^{\con{\beta}}$, as desired. 
 	
 	To prove that we can take $F = 0$ at $t = 0$, let $\mathfrak{F} = F^\beta_\alpha$ at $t = 0$ and 
 	consider the new frame 
 	$$
 	  \tilde{Z}_{\alpha(t)} = (e^{-t\mathfrak{F}})^\beta_\alpha Z_\beta.  
 	$$
 	Then, by cancellation 
 	$$
 	\tilde{Z}_{\alpha(0)} = Z_{\alpha(0)} \quad \hbox{ and } \quad \frac{d}{dt}_{t = 0}   \tilde{Z}_{\alpha(t)}  = 
 	- i E^{\con{\beta}}_{\alpha(0)} \tilde{Z}_{\con{\beta}(0)}, 
 	$$
 	concluding the proof. 
 \end{proof}
 
 We next derive some consequences of the integrability conditions 
 \begin{equation}\label{eq:sq}
 	  \theta([Z_\alpha, Z_\beta]) = 0; \qquad \theta^{\con{\gamma}}([Z_\alpha,Z_\beta]) = 0, 
 \end{equation}
 which hold along all the deformation, i.e. for all $t$. We have the following result. 
 
\begin{lem} \label{l:E-symm}
	For all indices $\alpha, \beta, \gamma$ we have that 
	$$
	E_{\alpha \beta} = E_{\beta \alpha}; \qquad 
	E^{\con{\gamma}}_{\alpha, \beta} = E^{\con{\gamma}}_{\beta, \alpha}. 
	$$
\end{lem}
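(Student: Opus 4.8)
The plan is to differentiate the two integrability conditions in \eqref{eq:sq} --- which hold for every value of $t$ --- with respect to $t$, evaluate at $t=0$, and substitute the expressions for $\dot{J}$, $\dot{Z}_{\alpha}$ and $\dot{\theta}^{\con{\gamma}}$ provided by Lemmas \ref{l:def-E} and \ref{l:zdot}. Recall in particular that, at $t=0$, one has $\dot{Z}_{\alpha} = {F_{\alpha}}^{\mu}Z_{\mu} - i{E_{\alpha}}^{\con{\delta}}Z_{\con{\delta}}$ with ${F_{\alpha}}^{\mu}$ real and antisymmetric, and $\dot{\theta}^{\con{\gamma}} = iE^{\con{\gamma}}_{l}\theta^{l} - F^{\con{\gamma}}_{\con{l}}\theta^{\con{l}}$.

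For the first identity I would differentiate $\theta([Z_{\alpha}, Z_{\beta}]) = 0$; as $\theta$ is held fixed this gives $\theta([\dot{Z}_{\alpha}, Z_{\beta}]) + \theta([Z_{\alpha}, \dot{Z}_{\beta}]) = 0$ at $t=0$. Expanding the Lie bracket via $[fX,Y] = f[X,Y] - (Yf)X$ and using that $\theta$ annihilates $\ci{H}\oplus\con{\ci{H}}$, together with the relation $\theta([Z_{\mu}, Z_{\beta}]) = 0$ from \eqref{eq:sq}, the only surviving term is $-i{E_{\alpha}}^{\con{\delta}}\,\theta([Z_{\con{\delta}}, Z_{\beta}])$; since $i\theta([Z_{\beta}, Z_{\con{\delta}}]) = h_{\beta\con{\delta}} = \delta_{\beta}^{\delta}$ by the definition of the Levi form, this equals ${E_{\alpha}}^{\con{\beta}}$. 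By the antisymmetry of the bracket $\theta([Z_{\alpha}, \dot{Z}_{\beta}]) = -{E_{\beta}}^{\con{\alpha}}$, hence ${E_{\alpha}}^{\con{\beta}} = {E_{\beta}}^{\con{\alpha}}$, that is $E_{\alpha\beta} = E_{\beta\alpha}$ once the index is lowered with $h$.

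For the second identity I would fix a point $p$ and, using Lemma \ref{l:omegazero}, choose an admissible co-frame for the $t=0$ structure with $\omega^{\beta}_{\alpha}(p) = 0$, extended to a $J_{(t)}$-unitary family for $t$ near $0$. Since $E^{\con{\gamma}}_{\alpha,\beta}$ are the components of the tensor $\nabla E$, the vanishing of its skew part in $\alpha$ and $\beta$ is a frame-independent statement, so it suffices to verify the equality at $p$. Differentiating $\theta^{\con{\gamma}}([Z_{\alpha}, Z_{\beta}]) = 0$ in $t$ at $t=0$ produces $\dot{\theta}^{\con{\gamma}}([Z_{\alpha}, Z_{\beta}]) + \theta^{\con{\gamma}}([\dot{Z}_{\alpha}, Z_{\beta}]) + \theta^{\con{\gamma}}([Z_{\alpha}, \dot{Z}_{\beta}]) = 0$. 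In each summand, the contributions carrying the coefficients ${F_{\alpha}}^{\mu}$, as well as the term $-F^{\con{\gamma}}_{\con{l}}\theta^{\con{l}}([Z_{\alpha}, Z_{\beta}])$ coming from $\dot{\theta}^{\con{\gamma}}$, vanish outright because of \eqref{eq:sq}. The remaining part of the first summand is $iE^{\con{\gamma}}_{l}\,\theta^{l}([Z_{\alpha}, Z_{\beta}])$, and the structure equation $d\theta^{l} = \theta^{\rho}\wedge\omega^{l}_{\rho} + \theta\wedge\tau^{l}$ gives $\theta^{l}([Z_{\alpha}, Z_{\beta}]) = -\bigl(\omega^{l}_{\alpha}(Z_{\beta}) - \omega^{l}_{\beta}(Z_{\alpha})\bigr) = 0$ at $p$. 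In the second summand, expanding $[\dot{Z}_{\alpha}, Z_{\beta}]$ and applying $\theta^{\con{\gamma}}$, the piece $-i{E_{\alpha}}^{\con{\delta}}\,\theta^{\con{\gamma}}([Z_{\con{\delta}}, Z_{\beta}])$ vanishes at $p$ by the structure equation for $d\theta^{\con{\gamma}}$ together with $\omega(p) = 0$, so only $i\,Z_{\beta}({E_{\alpha}}^{\con{\gamma}})$ remains, which equals $i\,E^{\con{\gamma}}_{\alpha,\beta}$ at $p$ since there the Tanaka--Webster covariant derivative reduces to the ordinary one. The third summand produces $-i\,E^{\con{\gamma}}_{\beta,\alpha}$ by the same computation with $\alpha$ and $\beta$ interchanged. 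Collecting, $E^{\con{\gamma}}_{\alpha,\beta} = E^{\con{\gamma}}_{\beta,\alpha}$ at $p$, hence everywhere.

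The computation is otherwise mechanical, and the only point that needs real care is the systematic check that every ``extra'' term --- those carrying the frame-rotation coefficients ${F_{\alpha}}^{\mu}$, the contribution of $\dot{\theta}^{\con{\gamma}}$, and the connection forms $\omega^{\beta}_{\alpha}$ --- genuinely disappears. This is exactly what obliges us to invoke both identities of \eqref{eq:sq} (and, implicitly, their $t$-derivatives, through the differentiation step) and to carry out the second computation at a point where the special co-frame of Lemma \ref{l:omegazero} is available, whereas the first computation can be performed in any unitary frame. I do not anticipate any genuine difficulty beyond this bookkeeping.
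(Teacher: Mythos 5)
Your proposal is correct and takes essentially the same route as the paper: both identities are obtained by differentiating the integrability conditions \eqref{eq:sq} in $t$ and substituting the variation formulas of Lemmas \ref{l:def-E} and \ref{l:zdot}, with the Levi form producing the first symmetry and the $\theta^{\con{\gamma}}$-component producing the second. The only cosmetic difference is that for the second identity the paper keeps the connection-form terms and assembles them directly into the covariant derivatives $E^{\con{\gamma}}_{\alpha,\beta}$, whereas you eliminate them by working at a point with $\omega^\beta_\alpha(p)=0$ (Lemma \ref{l:omegazero}) and appealing to tensoriality; both versions are fine.
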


 \begin{proof}
 	We differentiate in $t$ the first relation in \eqref{eq:sq}, obtaining  
 	\begin{align*}
 		 	 \frac{d}{dt} [Z_\alpha, Z_\beta] & = [\dot{Z}_\alpha, Z_\beta] 
 		 	 + [Z_\alpha, \dot{Z}_\beta] \\ & = [F^\gamma_\alpha Z_\gamma - i E^{\con{\gamma}}_\alpha Z_{\con{\gamma}}, Z_\beta] 
 		 	 + [Z_\alpha, F^\gamma_\beta Z_\gamma - i E^{\con{\gamma}}_\beta Z_{\con{\gamma}}], 
 	\end{align*}
 which by integrability yields 
 $$
  0 =  \theta \left( \frac{d}{dt} [Z_\alpha, Z_\beta]  \right) = - i E^{\con{\gamma}}_\alpha \theta ([Z_{\con{\gamma}}, Z_\beta] )
  - i E^{\con{\gamma}}_\beta \theta ([Z_\alpha,   Z_{\con{\gamma}}]). 
 $$
 Notice that 
 $$
   [Z_{\con{\gamma}}, Z_\beta] = i h_{\beta \con{\gamma}} T + \omega^l_\beta(Z_{\con{\gamma}}) Z_l - 
   \omega^{\con{l}}_{\con{\gamma}}(Z_\beta) Z_{\con{l}},
 $$
 which in turn implies 
 $$
   0 = - i E^{\con{\gamma}}_\alpha i h_{\beta \con{\gamma}} + i E^{\con\gamma}_\beta i h_{\alpha \con{\gamma}}. 
 $$
 Since $h_{\beta \con{\gamma}} = \delta_{\beta}^{\gamma}$, we deduce  the first assertion of the lemma.
 
 \

 We next differentiate in $t$ the second relation in \eqref{eq:sq} to get 
 $$
  0 = \dot{\theta}^{\con{\gamma}}([Z_\alpha, Z_\beta]) + \theta^{\con{\gamma}} \left( \frac{d}{dt} [Z_\alpha, Z_\beta]  \right). 
 $$
 Since by Lemma \ref{l:zdot}
 $$
 \dot{\theta}^{\con{\gamma}} = i  E^{\con{\gamma}}_l \theta^l - F^{\con{\gamma}}_{\con{l}} \theta^{\con{l}},
 $$
 and 
 $$
  [Z_\alpha, Z_\beta] = \omega^l_\beta(Z_\alpha) Z_l - \omega^l_\alpha(Z_\beta) Z_l,
 $$
 we obtain 
 \begin{align*}
 	0 & =  i  E^{\con{\gamma}}_l \theta^l (\omega^l_\beta(Z_\alpha) - \omega^l_\alpha(Z_\beta)) 
 	\\ & +  \theta^{\con{\gamma}} \left( [Z_\alpha, F^\gamma_\beta Z_\gamma - i E^{\con{\gamma}}_\beta Z_{\con{\gamma}}] 
 	+ [Z_\alpha, F^\gamma_\beta Z_\gamma - i E^{\con{\gamma}}_\beta Z_{\con{\gamma}}] \right) \\ &  =
 	- i \omega^l_\alpha(Z_\beta) E^{\con{\gamma}}_l + i \omega^l_\beta(Z_\alpha) E^{\con{\gamma}}_l 
 	+ i E^{\con{l}}_\alpha \omega^{\con{\gamma}}_{\con{l}}(Z_\beta) \\ & + i Z_\beta(E^{\con{\gamma}})_\alpha
 	- i E^{\con{l}}_\beta \omega^{\con{\gamma}}_{\con{l}}(Z_\alpha) - i Z_\alpha(E^{\con{l}}_\beta).
 \end{align*}
This implies
$$
i E^{\con{\gamma}}_{\alpha, \beta} - i E^{\con{\gamma}}_{\beta, \alpha} = 0, 
$$
which is the second assertion. 
 \end{proof}

\section{Variation of geometric quantities} \label{s:def}

In this section we compute the variation of several relevant geometric quantities along the 
deformation of $J$. Our final goal is to derive a second variation formula for the 
 normalized integral of the  Webster curvature at its critical points.

We begin with the variation of connection and torsion. 

\begin{lem}
	For all $t$, the variation of the torsion is given by 
	\begin{equation} \label{eq:d-tor}
		\dot{A}^\alpha_{\con{\gamma}} = - i E^\alpha_{\con{\gamma},0} + A^\alpha_{\con{l}} F^{\con{l}}_{\con{\gamma}} 
		- F^\alpha_l A^l_{\con{\gamma}}, 
	\end{equation}
	while for the variation of the connection we have 
\begin{align} \label{eq:d-conn}
	\dot{\omega}^\alpha_\beta & = \left[ i (A^\alpha_{\con{\gamma}} E^{\con{\gamma}}_\beta + E^\alpha_{\con{\gamma}} A^{\con{\gamma}}_\beta)
	+ F^\alpha_{\beta,0} \right] 
	\theta \\ & + (- i E^{\con{\beta}}_{\gamma,\con{\alpha}} 
	- F^{\con{\beta}}_{\con{\alpha},\gamma})  \theta^\gamma + (- i E^\alpha_{\con{\gamma},\beta} + F^\alpha_{\beta,\con{\gamma}}) 
	\theta^{\con{\gamma}}. \nonumber
\end{align}
\end{lem}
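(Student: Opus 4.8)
The plan is to differentiate in $t$ the three relations characterizing the Tanaka--Webster connection --- the first structure equation $d\theta^\beta=\theta^\alpha\wedge\omega^\beta_\alpha+\theta\wedge\tau^\beta$ with $\tau^\beta=A^\beta_{\con\gamma}\theta^{\con\gamma}$, the metric compatibility $\omega_{\alpha\con\beta}+\omega_{\con\beta\alpha}=dh_{\alpha\con\beta}$, and the torsion symmetry $A_{\alpha\beta}=A_{\beta\alpha}$ --- and then to extract $\dot\omega^\beta_\alpha$ and $\dot A^\beta_{\con\gamma}$ by decomposing the resulting identity into types. Since the contact form $\theta$ is kept fixed --- hence also the Reeb field $T$ and the two-form $d\theta=i\,\theta^\alpha\wedge\theta^{\con\alpha}$ --- and since $h_{\alpha\con\beta}=\delta_\alpha^\beta$ is constant, differentiation of the structure equation gives
$$ d\dot\theta^\beta=\dot\theta^\alpha\wedge\omega^\beta_\alpha+\theta^\alpha\wedge\dot\omega^\beta_\alpha+\theta\wedge\big(\dot A^\beta_{\con\gamma}\theta^{\con\gamma}+A^\beta_{\con\gamma}\dot\theta^{\con\gamma}\big), $$
together with $\dot\omega^\beta_\alpha+\con{\dot\omega^\alpha_\beta}=0$ and $\dot A_{\alpha\beta}=\dot A_{\beta\alpha}$. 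Here one must keep the term $A^\beta_{\con\gamma}\dot\theta^{\con\gamma}$, which arises because the coframe itself varies.

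Next I would substitute the expression for $\dot\theta^{\con\gamma}$ from Lemma~\ref{l:zdot} and its conjugate $\dot\theta^\beta=-F^\beta_l\theta^l-iE^\beta_{\con l}\theta^{\con l}$, and evaluate the identity at an arbitrary point $p$ in an admissible coframe for which $\omega^\beta_\alpha(p)=0$, as provided by Lemma~\ref{l:omegazero}. This makes $\dot\theta^\alpha\wedge\omega^\beta_\alpha$ disappear, lets us replace frame derivatives of the components of $E$ and $F$ by Tanaka--Webster covariant derivatives (so that, e.g., $dE^\beta_{\con l}\big|_p=E^\beta_{\con l,m}\theta^m+E^\beta_{\con l,\con m}\theta^{\con m}+E^\beta_{\con l,0}\theta$), and reduces $d\theta^\gamma$ and $d\theta^{\con\gamma}$ at $p$ to their torsion parts $\theta\wedge\tau^\gamma$ and $\theta\wedge\tau^{\con\gamma}$. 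Expanding $\dot\omega^\beta_\alpha=P^\beta_\alpha\theta+Q^\beta_{\alpha\gamma}\theta^\gamma+R^\beta_{\alpha\con\gamma}\theta^{\con\gamma}$ and matching both sides against the basis $\{\theta^\gamma\wedge\theta^\delta,\ \theta^{\con\gamma}\wedge\theta^{\con\delta},\ \theta^\gamma\wedge\theta^{\con\delta},\ \theta\wedge\theta^\gamma,\ \theta\wedge\theta^{\con\gamma}\}$ of two-forms then turns the problem into linear algebra.

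Concretely, the coefficient of $\theta\wedge\theta^{\con\gamma}$ is, on the right-hand side, precisely $\dot A^\beta_{\con\gamma}$, while on the left-hand side it collects $-iE^\beta_{\con\gamma,0}$ (from differentiating the $\theta^{\con l}$-coefficient of $\dot\theta^\beta$), $-F^\beta_l A^l_{\con\gamma}$ (from $d\theta^l\big|_p=\theta\wedge\tau^l$), and $A^\beta_{\con l}F^{\con l}_{\con\gamma}$ (from $A^\beta_{\con\gamma}\dot\theta^{\con\gamma}$); this is \eqref{eq:d-tor}. In the same way the coefficient of $\theta\wedge\theta^\gamma$ yields the Reeb part $P^\beta_\alpha=F^\beta_{\alpha,0}+i\big(A^\beta_{\con\gamma}E^{\con\gamma}_\alpha+E^\beta_{\con\gamma}A^{\con\gamma}_\alpha\big)$, i.e. the $\theta$-term of \eqref{eq:d-conn}. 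For the ``spatial'' components of $\dot\omega$ the structure equation alone determines only the antisymmetric parts of $Q^\beta_{\alpha\gamma}$ and $R^\beta_{\alpha\con\gamma}$; one recovers the rest from the $\theta^\gamma\wedge\theta^{\con\delta}$-component, which gives $R^\beta_{\alpha\con\gamma}=F^\beta_{\alpha,\con\gamma}-iE^\beta_{\con\gamma,\alpha}$, and then from the differentiated metric compatibility $\dot\omega^\beta_\alpha=-\con{\dot\omega^\alpha_\beta}$, which forces $Q^\beta_{\alpha\gamma}=-\con{R^\alpha_{\beta\con\gamma}}$; conjugating produces the $\theta^\gamma$- and $\theta^{\con\gamma}$-terms of \eqref{eq:d-conn}. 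The symmetries of Lemma~\ref{l:E-symm} (together with $E_\alpha^\beta=0$ from Lemma~\ref{l:def-E}) ensure that the various coefficient matchings are mutually consistent, and since $F$ is never normalized, the formulas hold for every $t$.

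The main obstacle is exactly this underdetermination: as in the uniqueness proof of the Tanaka--Webster connection, the differentiated first structure equation does not by itself separate $\dot\omega$ from the torsion, so one must feed in the metric compatibility and carefully track which index symmetrizations each of the five types of two-forms controls. A secondary pitfall is the extra term $A^\beta_{\con\gamma}\dot\theta^{\con\gamma}$ generated by differentiating $\tau^\beta=A^\beta_{\con\gamma}\theta^{\con\gamma}$, which is responsible for the $A^\alpha_{\con l}F^{\con l}_{\con\gamma}$ summand in \eqref{eq:d-tor}. Beyond these points the computation is a somewhat lengthy but routine exercise in conjugation bookkeeping; alternatively, one may differentiate the closed-form expressions of $\omega^\beta_\alpha$ and $A_{\alpha\beta}$ in terms of the structure functions of the frame, with comparable effort.
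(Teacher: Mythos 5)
Your proposal is correct and follows essentially the same route as the paper: differentiate the first structure equation in $t$, evaluate in an admissible coframe with $\omega^\beta_\alpha(p)=0$ so that frame derivatives of $E,F$ become covariant derivatives, read off the torsion variation and the $\theta$-component of $\dot\omega$ from the $\theta\wedge\theta^{\con\gamma}$ and $\theta\wedge\theta^{\gamma}$ coefficients, get the $\theta^{\con\gamma}$-component from the mixed $\theta^\gamma\wedge\theta^{\con\delta}$ terms, and recover the $\theta^\gamma$-component from the differentiated compatibility $\dot\omega^\beta_\alpha+\dot\omega^{\con\alpha}_{\con\beta}=0$. Your bookkeeping of the extra terms $A^\alpha_{\con l}F^{\con l}_{\con\gamma}$ and $-F^\alpha_l A^l_{\con\gamma}$ matches the paper's computation, so no gap remains.
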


  \begin{proof}

 We start by differentiating in $t$ the relations  
 $$
 \theta^\alpha_{(t)}(Z_{\beta(t)}) = \delta^\alpha_\beta; \qquad \theta^\alpha_{(t)}(Z_{\con{\beta}(t)}) = \theta^\alpha_{(t)}(T) = 0,  
 $$
 to get 
 $$
 \dot{\theta}^\alpha(Z_\beta) + \theta^\alpha(\dot{Z}_\beta) = 0; \qquad 
 \dot{\theta}^\alpha(Z_{\con{\beta}}) + \theta^\alpha(\dot{Z}_{\con{\beta}}) = 0; \qquad 
 \dot{\theta}^\alpha(T) = 0. 
 $$
 Recall that by Lemma \ref{l:zdot} one has 
 \begin{equation}\label{eq:0}
 	 \dot{\theta}^\alpha = - i E^\alpha_{\con{\beta}} \theta^{\con{\beta}} - F^\alpha_\beta \theta^{\beta}. 
 \end{equation}
%
%
%
%
%
%
%
Differentiate now in $t$  the structure equation 
$$
d \theta^\alpha = \theta^\beta_{(t)} \wedge \omega^\alpha_{\beta(t)} + A^\alpha_{\con{\gamma}(t)} 
\theta \wedge \theta^{\con{\gamma}}_{(t)}
$$ to get 
\begin{align} \nonumber
 d \dot{\theta}^\alpha & =  \dot{\theta}^\beta \wedge \omega^\alpha_\beta + \theta^\beta \wedge \dot{\omega}^\alpha_\beta
+ \dot{A}^\alpha_{\con{\gamma}} \theta \wedge \theta^{\con{\gamma}} + A^\alpha_{\con{\gamma}} \theta \wedge
\dot{\theta}^{\con{\gamma}} \\  \label{eq:cheng-1}
 & =  - i d E^\alpha_{\con{\beta}} \wedge \theta^{\con{\beta}} - i E^\alpha_{\con{\beta}} d \theta^{\con{\beta}} - d F^\alpha_\beta \wedge 
 \theta^\beta - F^\alpha_\beta d \theta^\beta. 
\end{align}
At a given point $p$ we may assume that $\omega^\alpha_\beta = 0$, by Lemma \ref{l:omegazero}. Therefore we obtain at $p$ 
$$
  - i d E^\alpha_{\con{\beta}} \wedge \theta^{\con{\beta}} = - i E^\alpha_{\con{\beta},0} \theta \wedge \theta^{\con{\beta}} 
  	- i E^\alpha_{\con{\beta},\gamma} \theta^\gamma \wedge \theta^{\con{\beta}} 
  	- i E^\alpha_{\con{\beta},\con{\gamma}} \theta^{\con{\gamma}} \wedge \theta^{\con{\beta}}; 
$$
$$
 d \theta^{\con{\beta}} = \theta^{\con{\gamma}} \wedge \omega^{\con{\beta}}_{\con{\gamma}} 
 + A^{\con{\beta}}_\gamma \theta \wedge \theta^\gamma = A^{\con{\beta}}_\gamma \theta \wedge \theta^\gamma; 
$$
$$
  - d F^\alpha_\beta \wedge 
  \theta^\beta - F^\alpha_\beta d \theta^\beta = - F^\alpha_{\beta,0} \theta \wedge \theta^\beta 
  - F^\alpha_{\beta,\gamma} \theta^\gamma \wedge \theta^\beta 
  - F^\alpha_{\beta,\con{\gamma}} \theta^{\con{\gamma}} \wedge \theta^\beta
  - F^\alpha_\beta A^\beta_{\con{\gamma}} \theta \wedge \theta^{\con{\gamma}}.   
$$
Comparing the coefficients of $\theta \wedge \theta^{\con{\gamma}}$ in \eqref{eq:cheng-1}, we deduce the 
first assertion.

 Write next 
 \begin{equation} \label{eq:do}
 	\dot{\omega}^\alpha_\beta = x^\alpha_\beta \theta + y^\alpha_{\beta \gamma} \theta^\gamma + 
 	y^\alpha_{\beta \con{\gamma}} \theta^{\con{\gamma}}. 
 \end{equation}
 From the relation $\omega^\beta_\alpha + \omega^{\con{\alpha}}_{\con{\beta}}  = 0$, which 
 implies $\dot{\omega}^\beta_\alpha + \dot{\omega}^{\con{\alpha}}_{\con{\beta}}  = 0$,  we get the system
 $$
 \begin{cases}
 	x^\alpha_\beta + x^{\con{\beta}}_{\con{\alpha}} = 0; \\ 
 	y^\alpha_{\beta \gamma} = - \overline{y^\beta_{\alpha \con{\gamma}}} := - y^{\con{\beta}}_{\con{\alpha}\gamma}. 
 \end{cases}
 $$
  Substituting \eqref{eq:0}, \eqref{eq:d-tor} and \eqref{eq:do}  into \eqref{eq:cheng-1} we obtain  
  \begin{equation}\label{eq:xy}
  	\begin{cases}
  		x^\alpha_\beta = i E^\alpha_{\con{\gamma}} A^{\con{\gamma}}_\beta + i A^\alpha_{\con{\gamma}} E^{\con{\gamma}}_\beta
  		+ F^\alpha_{\beta,0}; \\ 
  		y^\alpha_{\beta \con{\gamma}} = - i E^\alpha_{\con{\gamma},\beta} + F^\alpha_{\beta,\con{\gamma}},
  	\end{cases}
  \end{equation}
 which implies in particular  $y^\alpha_{\beta {\gamma}} = - i E^{\con{\beta}}_{\gamma,\con{\alpha}} 
 - F^{\con{\beta}}_{\con{\alpha},\gamma}$. We also get the relations 
 $$
   (y^\alpha_{\beta \gamma} - F^\alpha_{\beta, \gamma}) \theta^\beta \wedge \theta^\gamma = 0; \qquad 
   - i E^\alpha_{\con{\beta}, \con{\gamma}} \theta^{\con{\gamma}} \wedge \theta^{\con{\beta}} = 0,
 $$
giving the following constraints on the deformation tensors 
   $$
 \begin{cases}
 	y^\alpha_{\beta \gamma} - F^\alpha_{\beta, \gamma} = y^\alpha_{\gamma \beta} - F^\alpha_{\gamma, \beta}; \\ 
 	E^\alpha_{\con{\beta}, \con{\gamma}} = E^\alpha_{\con{\gamma}, \con{\beta}}. 
 \end{cases}
 $$
 In this way, we obtain  the second assertion as well. 
 \end{proof}
 
 We can now compute the derivative of the curvature tensor with respect to $t$, together with its contractions.

 \begin{pro} \label{p:var-curv}
 	For the curvature tensor, the Ricci tensor and the Webster curvature we have the following 
 	variation formulas 
 	\begin{align} \label{eq:var-curv}
 		\dot{R}^\alpha_{\beta \rho \con{\sigma}} & =  y^\alpha_{\beta \con{\sigma} \rho} - y^\alpha_{\beta \rho \con{\sigma}} 
 		+ i x^\alpha_\beta \delta_{\rho \con{\sigma}} + R^\alpha_{\beta l \con{\sigma}} F^l_\rho + R^\alpha_{\beta \rho \con{l}} F^{\con{l}}_{\con{\sigma}}
 		\\   & + A_{\con{\alpha} \con{\sigma}} E^{\con{\beta}}_\rho + A_{\beta \rho} E^\alpha_{\con{\sigma}} - 
 		A_{\con{\alpha} \con{\gamma}} E^{\con{\gamma}}_\rho \delta_{\beta \con{\sigma}} 
 		- A_{\beta \gamma} E^\gamma_{\con{\sigma}} \delta_{\alpha \rho}; \nonumber
 	\end{align}
 	\begin{align} \label{eq:var-Ricci}
 		\dot{R}_{\rho \con{\sigma}(t)} = i E^{\con{\gamma}}_{\rho, \con{\gamma} \con{\sigma}} - i E^\gamma_{\con{\sigma}, \gamma \rho} 
 		- (A^{\con{\gamma}}_l E^l_{\con{\gamma}} + A^l_{\con{\gamma}} E^{\con{\gamma}}_l) \delta_{\rho \con{\sigma}} 
 		+ R_{l \con{\sigma}} F^l_\rho + R_{\rho \con{l}} E^{\con{l}}_{\con{\sigma}}, 
 	\end{align}
and 
 	\begin{equation}\label{eq:first-var-R}
 		\dot{W} = \dot{R}_{\alpha \con{\alpha}} = i E^{\con{\gamma}}_{l, \con{\gamma} \con{l}} - i E^\gamma_{\con{l}, \gamma l} 
 		- (A^{\con{\gamma}}_l E^l_{\con{\gamma}} + A^l_{\con{\gamma}} E^{\con{\gamma}}_l) n
 		+ R_{l \con{\gamma}} F^l_\gamma + R_{r \con{\gamma}} F^{\con{\gamma}}_{\con{r}}. 
 	\end{equation}
 \end{pro}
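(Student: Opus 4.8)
The plan is to differentiate the two structure equations for the curvature form $\Pi^\alpha_\beta$ with respect to $t$ and match coefficients. First I fix an arbitrary point $p\in M$ and, by Lemma~\ref{l:omegazero}, choose the admissible coframe $(\theta^\alpha)_\alpha$ so that $\omega^\beta_\alpha(p)=0$; since $p$ is arbitrary, verifying the three identities at $p$ suffices. Differentiating $\Pi^\alpha_\beta = d\omega^\alpha_\beta - \omega^\gamma_\beta\wedge\omega^\alpha_\gamma$ in $t$ and using $\omega^\cdot_\cdot(p)=0$, the quadratic terms drop out, so $\dot\Pi^\alpha_\beta = d\dot\omega^\alpha_\beta$ at $p$.

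Next I compute $d\dot\omega^\alpha_\beta$ at $p$ starting from the expansion \eqref{eq:do}, $\dot\omega^\alpha_\beta = x^\alpha_\beta\theta + y^\alpha_{\beta\gamma}\theta^\gamma + y^\alpha_{\beta\con{\gamma}}\theta^{\con{\gamma}}$: applying $d$ and inserting the structure equations valid at $p$, namely $d\theta = i h_{\rho\con{\sigma}}\theta^\rho\wedge\theta^{\con{\sigma}}$, $d\theta^\gamma = \theta\wedge\tau^\gamma$ and $d\theta^{\con{\gamma}} = \theta\wedge\tau^{\con{\gamma}}$, together with the fact that covariant derivatives reduce to ordinary ones at $p$, one reads off the $\theta^\rho\wedge\theta^{\con{\sigma}}$ component of $d\dot\omega^\alpha_\beta$ as $y^\alpha_{\beta\con{\sigma}\rho} - y^\alpha_{\beta\rho\con{\sigma}} + i x^\alpha_\beta \delta_{\rho\con{\sigma}}$ (only this component is needed). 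On the other hand I differentiate in $t$ the structure equation
$$\Pi^\alpha_\beta = R^\alpha_{\beta\rho\con{\sigma}}\theta^\rho\wedge\theta^{\con{\sigma}} + W^\alpha_{\beta\gamma}\theta^\gamma\wedge\theta - W^\alpha_{\beta\con{\gamma}}\theta^{\con{\gamma}}\wedge\theta + i\theta_\beta\wedge\tau^\alpha - i\tau_\beta\wedge\theta^\alpha,$$
recalling that $\theta$ itself is held fixed, and substituting $\dot\theta^\rho$, $\dot\theta^{\con{\sigma}}$, $\dot\theta_\beta$ from Lemma~\ref{l:zdot} and $\dot\tau^\alpha$ obtained from \eqref{eq:d-tor}; extracting the $\theta^\rho\wedge\theta^{\con{\sigma}}$ component of $\dot\Pi^\alpha_\beta$ then produces $\dot R^\alpha_{\beta\rho\con{\sigma}}$ together with the terms $R^\alpha_{\beta l\con{\sigma}}F^l_\rho$ and $R^\alpha_{\beta\rho\con{l}}F^{\con{l}}_{\con{\sigma}}$ (from $\dot\theta^\rho$ and $\dot\theta^{\con{\sigma}}$) and the four $A\cdot E$ terms (from the $t$-derivative of $i\theta_\beta\wedge\tau^\alpha - i\tau_\beta\wedge\theta^\alpha$, using $\theta_\beta = h_{\beta\con\gamma}\theta^{\con\gamma}$ and $\tau^\alpha = A^\alpha_{\con\gamma}\theta^{\con\gamma}$). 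Equating the two expressions for $\dot\Pi^\alpha_\beta$ gives \eqref{eq:var-curv}.

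Finally, for the Ricci and Webster formulas I trace \eqref{eq:var-curv}: setting $\beta=\alpha$ and summing gives $\dot R_{\rho\con{\sigma}} = \dot R^\alpha_{\alpha\rho\con{\sigma}}$, and I substitute the explicit values of $x^\alpha_\alpha$, $y^\alpha_{\alpha\gamma}$, $y^\alpha_{\alpha\con{\gamma}}$ from \eqref{eq:xy}, using that $F$ is real and skew-symmetric (hence $F^\alpha_\alpha = 0$), the symmetries $E_{\alpha\beta}=E_{\beta\alpha}$ and $E^{\con{\gamma}}_{\alpha,\beta}=E^{\con{\gamma}}_{\beta,\alpha}$ from Lemma~\ref{l:E-symm}, the Bianchi identity \eqref{eq:Bianchi}, and the commutation rules for second-order covariant derivatives recalled in Section~\ref{s:prel}. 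The key point is that the direct $A\cdot E$ terms cancel in pairs by virtue of the symmetry $A_{\alpha\beta}=A_{\beta\alpha}$, which leaves exactly \eqref{eq:var-Ricci}; a further trace ($\rho=\sigma$, summed) replaces $\delta_{\rho\con{\sigma}}$ by $n$ and yields \eqref{eq:first-var-R}. I expect the main obstacle to be purely computational: keeping track of index raising and lowering with $h_{\alpha\con{\beta}}=\delta$, the signs arising when picking out the coefficient of $\theta^\rho\wedge\theta^{\con{\sigma}}$ rather than $\theta^{\con{\sigma}}\wedge\theta^\rho$, and the correct reordering of iterated covariant derivatives such as $E^{\con{\gamma}}_{\rho,\con{\gamma}\con{\sigma}}$ through the commutation relations — rather than any conceptual difficulty.
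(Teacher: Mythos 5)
Your proposal is correct and follows essentially the same route as the paper: differentiating the structure equation for $\Pi^\alpha_\beta$ in $t$ at a point where $\omega^\alpha_\beta(p)=0$ (Lemma \ref{l:omegazero}), extracting the $\theta^\rho\wedge\theta^{\con{\sigma}}$ component using $\dot{\theta}^\alpha$ from Lemma \ref{l:zdot} and the expansion \eqref{eq:do} with \eqref{eq:xy}, and then tracing twice with $F^\alpha_\alpha=0$, the symmetry $A_{\alpha\beta}=A_{\beta\alpha}$ (which indeed cancels the direct $A\cdot E$ terms in pairs, the surviving $A\cdot E$ term coming from $i x^\alpha_\alpha\delta_{\rho\con{\sigma}}$), and the symmetries of $E$.
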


 \begin{proof}

 Differentiate  in $t$ the structure equation (see Section 2)
 \begin{align*}
 	d \omega^\alpha_{\beta(t)} - \omega^\gamma_{\beta(t)} \wedge \omega^\alpha_{\gamma(t)} & = 
 	R^\alpha_{\beta \rho \con{\sigma}(t)} \theta^\rho_{(t)} \wedge \theta^{\con{\sigma}}_{(t)} 
 	+ W^\alpha_{\beta \rho(t)} \theta^{\rho}_{(t)} \wedge \theta - W^\alpha_{\beta \con{\rho}} 
 	\theta^{\con{\rho}}_{(t)} \wedge \theta \\ & + i \theta_{\beta (t)} \wedge \tau^\alpha_{(t)} 
 	- i \tau_{\beta (t)} \wedge \theta^\alpha_{(t)} 
 \end{align*}
  where, we recall,  
  $$
    \tau_\beta = A_{\beta \gamma} \theta^\gamma; \qquad 
    \tau^\alpha = A^\alpha_{\con{\gamma}} \theta^{\con{\gamma}}, 
  $$
 and $A^\alpha_{\con{\gamma}} = A_{\con{\alpha} \con{\gamma}}$ since $h_{\alpha \con{\beta}} = \delta_{\alpha}^{\beta}$. 
 We then deduce 
 \begin{align} \label{eq:dot-str} \nonumber
 	d \dot{\omega}^\alpha_{\beta} - \dot{\omega}^\gamma_{\beta} \wedge \omega^\alpha_{\gamma} 
 	- \omega^\gamma_\beta \wedge \dot{\omega}^\alpha_\gamma & =  
 	\dot{R}^\alpha_{\beta \rho \con{\sigma}} \theta^\rho \wedge \theta^{\con{\sigma}} 
 	+ R^\alpha_{\beta \rho \con{\sigma}} (\dot{\theta}^\rho \wedge \theta^{\con{\sigma}} + \theta^\rho 
 	\wedge \dot{\theta}^{\bar{\sigma}}) \\ & +  i \dot{\theta}^{\con{\beta}} \wedge A_{\con{\alpha} \con{\gamma}} 
 	\theta^{\con{\gamma}} + i \dot{A}_{\con{\alpha} \con{\gamma}} \theta^{\con{\beta}} 
 	\wedge \theta^{\con{\gamma}} + i A_{\con{\alpha} \con{\gamma}} \theta^{\con{\beta}} \wedge 
 	\dot{\theta}^{\con{\gamma}} \\ & -  i A_{\beta \gamma} \dot{\theta}^\gamma \wedge \theta^\alpha 
 	- i \dot{A}_{\beta \gamma} \theta^\gamma \wedge \theta^\alpha 
 	- i A_{\beta \gamma} \theta^\gamma \wedge \dot{\theta}^\alpha \nonumber \\ 
 	& \hbox{mod. } \quad \theta^\alpha \wedge \theta  \quad \hbox{ and } \quad \theta^{\con{\alpha}} \wedge \theta. \nonumber
 \end{align}
 Writing 
 $$
  d \dot{\omega}^\alpha_\beta = x^\alpha_\beta d \theta + y^\alpha_{\beta \gamma \con{l}} \theta^{\con{l}} 
  \wedge \theta^\gamma + y^\alpha_{\beta \gamma \con{l}} \theta^{\con{l}} \wedge \theta^\gamma 
  + y^\alpha_{\beta \con{\gamma} l} \theta^l \wedge \theta^{\con{\gamma}}, 
 $$
  keeping only terms of the type $\theta^\rho \wedge \theta^{\con{\sigma}}$ and 
 using $\omega^\alpha_\gamma(p) = 0$, we get 
 \begin{align} \label{eq:rr}
 	\dot{R}^\alpha_{\beta \rho \con{\sigma}} & =  y^\alpha_{\beta \con{\sigma} \rho} - y^\alpha_{\beta \rho \con{\sigma}} 
 	+ i x^\alpha_\beta \delta_{\rho \con{\sigma}} + R^\alpha_{\beta l \con{\sigma}} F^l_\rho + R^\alpha_{\beta \rho \con{l}} F^{\con{l}}_{\con{\sigma}}
 \\   & + A_{\con{\alpha} \con{\sigma}} E^{\con{\beta}}_\rho + A_{\beta \rho} E^\alpha_{\con{\sigma}} - 
 A_{\con{\alpha} \con{\gamma}} E^{\con{\gamma}}_\rho \delta_{\beta \con{\sigma}} 
 - A_{\beta \gamma} E^\gamma_{\con{\sigma}} \delta_{\alpha \rho}.  \nonumber
 \end{align}
  Recall that, from \eqref{eq:xy}, 
  $$
  y^\alpha_{\beta \con{\sigma}} = - i E^\alpha_{\con{\sigma},\beta} + F^\alpha_{\beta,\con{\sigma}} 
  \quad \Rightarrow \quad y^\alpha_{\beta \con{\sigma} \rho} = - i E^\alpha_{\con{\sigma}, \beta \rho} 
  + F^\alpha_{\beta, \con{\sigma} \rho}; 
  $$
  $$
  y^\alpha_{\con{\beta} {\gamma}} = - i E^{\con{\beta}}_{\rho, \con{\alpha}} - F^{\con{\beta}}_{\con{\alpha}, \rho} 
  \quad \Rightarrow \quad y^\alpha_{\beta \rho \con{\sigma}} = - i E^{\con{\beta}}_{\rho, \con{\alpha}\con{\sigma}} 
  - F^{\con{\beta}}_{\con{\alpha}, \rho \con{\sigma}}
  $$
  and that $x^\alpha_\beta = i(A^{\con{\gamma}}_\beta E^\alpha_{\con{\gamma}} + A^\alpha_{\con{\gamma}} + 
  A^\alpha_{\con{\gamma}} E^{\con{\gamma}}_\beta) + F^\alpha_{\beta,0}$. These last formulas, together with  
  \eqref{eq:rr}, yield \eqref{eq:var-curv}.

 Contracting then \eqref{eq:var-curv}, for the Ricci tensor $R_{\rho \con{\sigma}(t)} := R^\gamma_{\gamma \rho \con{\sigma}(t)}$ we  obtain, after some cancellation that uses $A_{\alpha \beta} = A_{\beta \alpha}$  
 \begin{align}
 	\dot{R}_{\rho \con{\sigma}(t)} = i E^{\con{\gamma}}_{\rho, \con{\gamma} \con{\sigma}} - i E^\gamma_{\con{\sigma}, \gamma \rho} 
 	- (A^{\con{\gamma}}_l E^l_{\con{\gamma}} + A^l_{\con{\gamma}} E^{\con{\gamma}}_l) \delta_{\rho \con{\sigma}} 
 	+ R_{l \con{\sigma}} F^l_\rho + R_{\rho \con{l}} E^{\con{l}}_{\con{\sigma}}. 
 \end{align}
 We then obtain for the Webster curvature $W_{(t)} = R_{\alpha \con{\alpha}}$ with a further contraction (recall that $h_{\alpha \con{\beta}} = \delta_{\alpha \beta}$) 
 $$
 \dot{W} = \dot{R}_{\alpha \con{\alpha}} = i E^{\con{\gamma}}_{l, \con{\gamma} \con{l}} - i E^\gamma_{\con{l}, \gamma l} 
 - (A^{\con{\gamma}}_l E^l_{\con{\gamma}} + A^l_{\con{\gamma}} E^{\con{\gamma}}_l) n
 + R_{l \con{\gamma}} F^l_r + R_{{\gamma} \con{l}} F^{\con{l}}_{\con{\gamma}},
 $$
 where we used $F^{\con{l}}_{\con{r}} = \overline{F^l_\gamma} = F^l_\gamma$.  
  \end{proof}

 \

 We can now pass to the calculation of the second derivative of the Webster curvature with respect to $t$.

 \begin{pro}
For the second variation of $W = W_{(t)}$ along the deformation $J_{(t)}$,  we have the following formula at $t = 0$ 
 \begin{align} \label{eq:ddotRR}
 	\ddot{W} & =  i \dot{E}^{\con{\gamma}}_{l, \con{\gamma} \con{l}} 
 	- A^{\con{\gamma}}_l \dot{E}^l_{\con{\gamma}}  n
 	+ R_{l \con{\gamma}} \dot{F}^l_\gamma  - n \dot{A}^{\con{\gamma}}_l E^l_{\con{\gamma}}   \\ \nonumber
 	& - E^l_{\con{\rho}} E^{\con{\gamma}}_{\rho, \con{\gamma}l} 
 	- E^l_{\con{\gamma}} E^{\con{\gamma}}_{\rho, l \con{\rho}} 
 	- E^{\con{\gamma}}_l E^l_{\con{\gamma},\rho \con{\rho}} - E^{\con{l}}_\rho 
 	E^l_{\con{\gamma},\gamma \con{\rho}} - E^l_{\con{\gamma}, \con{\rho}} E^{\con{\gamma}}_{\rho, l} \\ \nonumber
 	& - E^{\con{\gamma}}_{l,\con{\rho}} E^l_{\con{\gamma},\rho} - E^{\con{l}}_{\rho, \con{\rho}} E^l_{\con{\gamma},\gamma} 
 	- E^l_{\con{\rho},\rho} E^{\con{\gamma}}_{l,\con{\gamma}} + \text{\emph{conj.}}. 
 \end{align}
 \end{pro}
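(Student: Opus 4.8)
Along any frame field $Z_{(t)}$ realizing the deformation, formula \eqref{eq:first-var-R} holds for all $t$, so the plan is to differentiate it once more in $t$ and evaluate at $t=0$. A first observation that halves the work is that, using $h_{\alpha\con\beta}=\delta_{\alpha\beta}$, the Hermitian symmetry of the Ricci tensor, the symmetry $A_{\alpha\beta}=A_{\beta\alpha}$ and the reality of $F$, the right-hand side of \eqref{eq:first-var-R} equals $\dot W = P + \con P$ with
$$
P \;=\; i\, E^{\con\gamma}_{l,\con\gamma\con l} \;-\; n\, A^{\con\gamma}_l E^l_{\con\gamma} \;+\; R_{l\con\gamma} F^l_\gamma ,
$$
the conjugate $\con P$ reproducing the three remaining summands. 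Hence $\ddot W|_{t=0} = \dot P|_{t=0} + \con{\dot P|_{t=0}}$, so it is enough to compute $\dot P$ at $t=0$; the final conjugate term of \eqref{eq:ddotRR} is then $\con{\dot P}$. Throughout we use the frame provided by Lemma~\ref{l:zdot}, for which $F_{(0)}\equiv 0$ near the point under consideration, so that its covariant derivatives also vanish at $t=0$; then \eqref{eq:d-tor} and \eqref{eq:d-conn} specialize there to
$$
\dot A^\alpha_{\con\gamma} = -\,i\, E^\alpha_{\con\gamma,0}, \qquad
\dot\omega^\alpha_\beta = i\bigl(A^\alpha_{\con\gamma}E^{\con\gamma}_\beta + E^\alpha_{\con\gamma}A^{\con\gamma}_\beta\bigr)\,\theta - i\, E^{\con\beta}_{\gamma,\con\alpha}\,\theta^\gamma - i\, E^\alpha_{\con\gamma,\beta}\,\theta^{\con\gamma},
$$
while by Lemmas~\ref{l:def-E} and~\ref{l:zdot} the frame moves as $\dot Z_\alpha = -\,i\, E^{\con\beta}_\alpha Z_{\con\beta}$ at $t=0$, hence $\dot Z_{\con\alpha}= i\, E^\beta_{\con\alpha} Z_\beta$ by conjugation.

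Now differentiate the three terms of $P$ at $t=0$. The Leibniz rule applied to the scalar $-\,n A^{\con\gamma}_l E^l_{\con\gamma}$ gives $-\,n\dot A^{\con\gamma}_l E^l_{\con\gamma} - n A^{\con\gamma}_l \dot E^l_{\con\gamma}$; since $F_{(0)}=0$, differentiating $R_{l\con\gamma}F^l_\gamma$ leaves only $R_{l\con\gamma}\dot F^l_\gamma$; and from the first term of $P$ one obtains, among others, $i\,\dot E^{\con\gamma}_{l,\con\gamma\con l}$ --- together these four pieces are the first line of \eqref{eq:ddotRR}. All the remaining work lies in $\frac{d}{dt}\bigl(i\, E^{\con\gamma}_{l,\con\gamma\con l}\bigr)$. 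Here one views $E^{\con\gamma}_{l,\con\gamma\con l}$ as the value, on the tensor $E_{(t)}$, of a fixed second-order differential operator built from the connection $\omega_{(t)}$ and the frame $Z_{(t)}$, so that its $t$-derivative is $i\,\dot E^{\con\gamma}_{l,\con\gamma\con l}$ plus a remainder obtained by differentiating the operator. Computing at a point where $\omega_{(0)}=0$ (Lemma~\ref{l:omegazero}), this remainder collects the variations $\dot\omega$ and $\nabla\dot\omega$ of the connection inside the two covariant derivatives together with the variations $\dot Z$ of the two barred differentiation directions, which turn barred derivatives into unbarred ones contracted against $E$. Substituting the formulas above, the remainder becomes a finite sum of terms of the four schematic shapes $E\cdot\nabla\nabla E$, $\nabla E\cdot\nabla E$, $A\cdot E\cdot E$ and $\nabla A\cdot E\cdot E$.

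The main obstacle is the bookkeeping of this last step: one must bring the iterated covariant derivatives into the orderings displayed on the second and third lines of \eqref{eq:ddotRR} --- possibly invoking the commutation relations of Section~\ref{s:prel} --- and verify that every term carrying a torsion or a curvature factor cancels, leaving only the eight bilinear monomials $-\,E^l_{\con\rho}E^{\con\gamma}_{\rho,\con\gamma l},\ -\,E^l_{\con\gamma}E^{\con\gamma}_{\rho,l\con\rho},\ \dots,\ -\,E^l_{\con\rho,\rho}E^{\con\gamma}_{l,\con\gamma}$. The torsion terms should cancel by the symmetry $A_{\alpha\beta}=A_{\beta\alpha}$ and the Bianchi identity \eqref{eq:Bianchi}; the rearrangement of the $E$-factors into the stated contractions uses $E_{\alpha\beta}=E_{\beta\alpha}$ and $E^{\con\gamma}_{\alpha,\beta}=E^{\con\gamma}_{\beta,\alpha}$ from Lemma~\ref{l:E-symm} and their conjugates; and the constancy of $h_{\alpha\con\beta}=\delta_{\alpha\beta}$ lets raising, lowering and contraction commute with $\frac{d}{dt}$. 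Adding $\con{\dot P}$ then gives \eqref{eq:ddotRR}. I expect the delicate point to be exactly this reorganization --- tracking the order of the two covariant derivatives in each monomial, and confirming that the curvature terms generated when they are commuted, together with the torsion terms arising from the $A$-proportional part of $\dot\omega$, really do cancel rather than survive.
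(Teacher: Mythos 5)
Your plan is the same as the paper's: split $\dot W=P+\con P$ using \eqref{eq:first-var-R}, differentiate once more at $t=0$ in a frame with $F\equiv 0$, so that $\dot A^\alpha_{\con\gamma}=-iE^\alpha_{\con\gamma,0}$, $\dot Z_\alpha=-iE^{\con\beta}_\alpha Z_{\con\beta}$ and $\dot\omega^\alpha_\beta$ takes the reduced form you quote, and the terms linear in $\dot E,\dot F,\dot A$ reproduce the first line of \eqref{eq:ddotRR}. The genuine gap is that the heart of the proposition --- showing that the remaining part of $\tfrac{d}{dt}\bigl(iE^{\con\gamma}_{l,\con\gamma\con l}\bigr)$ equals exactly the eight quadratic monomials in the second and third lines of \eqref{eq:ddotRR} --- is only described schematically (``a finite sum of terms of the four shapes\dots'') and you explicitly leave open whether the torsion and curvature terms cancel. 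Since the statement to be proved \emph{is} this formula, a proposal that ends with ``I expect the cancellations to work out'' does not establish it: as written you have proved the first line plus an unidentified quadratic remainder, not \eqref{eq:ddotRR}.

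For comparison, the paper carries out precisely the computation you defer, and in a way that makes your feared bookkeeping largely disappear: one first computes $(E^{\con\gamma}_{\alpha,\beta})^{\cdot}$ and $(E^{\con\gamma}_{\alpha,\con\beta})^{\cdot}$ at $t=0$ (with $\dot E=0$, $F\equiv0$, $\omega(p)=0$), then differentiates the explicit expansion $E^{\con\gamma}_{\alpha,\con\beta\con\rho}=Z_{\con\rho}(E^{\con\gamma}_{\alpha,\con\beta})-\omega^l_\alpha(Z_{\con\rho})E^{\con\gamma}_{l,\con\beta}-\omega^{\con l}_{\con\beta}(Z_{\con\rho})E^{\con\gamma}_{\alpha,\con l}+\omega^{\con\gamma}_{\con l}(Z_{\con\rho})E^{\con l}_{\alpha,\con\beta}$ term by term and takes the trace. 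Because every $\dot\omega$ is evaluated only on horizontal vectors, its $\theta$-component (the $A$-proportional part) never enters, and since the derivatives are kept in a fixed order no commutation --- hence no curvature or torsion term --- is ever generated; the eight monomials drop out directly, and only then is the conjugate added. If you redo your bookkeeping along these lines (rather than reordering derivatives and hoping the resulting curvature/torsion terms cancel), your argument becomes the paper's proof; without that computation it remains an outline.
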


 \begin{proof}
 	From formula \eqref{eq:first-var-R}, we see that the contribution in the second variation 
 	from $\dot{E}$ and $\dot{F}$ is given by 
 	$$
 	 - (A^{\con{\gamma}}_l\dot{ E}^l_{\con{\gamma}} + A^l_{\con{\gamma}} \dot{E}^{\con{\gamma}}_l) n
 	 + R_{l \con{\gamma}} \dot{F}^l_\gamma + R_{r \con{\gamma}} \dot{F}^{\con{\gamma}}_{\con{r}}, 
 	$$
 	giving the second and third term in the right-hand side of \eqref{eq:ddotRR}, plus their conjugates. 
 	To compute the remaining terms, we can therefore assume that $\dot{ E} = 0$ and $\dot{F} = 0$. 
 	 
 	We will need first some preliminary calculation: recall that 
 	$$
 	 E^{\con{\gamma}}_{\alpha,\beta} = Z_\beta(E^{\con{\gamma}}_\alpha) 
 	 - \omega^l_\alpha(Z_\beta) E^{\con{\gamma}}_l  + \omega^{\con{\gamma}}_{\con{l}}(Z_\beta) E^{\con{l}}_\alpha. 
 	$$
 	Taking the $t$-derivative and using that $\dot{E}  = 0$ at $t = 0$ we get
 	$$
 	 0 = \dot{Z}_\beta(E^{\con{\gamma}}_\alpha) - \left[ \dot{\omega}^l_\alpha(Z_\beta) + \omega^l_\alpha(\dot{Z}_\beta) 
 	 \right] E^{\con{\gamma}}_l + \left[ \dot{\omega}^{\con{\gamma}}_{\con{l}}(Z_\beta) + \omega^{\con{\gamma}}_{\con{l}}
 	 (\dot{Z}_\beta) \right] E^{\con{l}}_\alpha. 
 	$$
 	Recalling that at $t = 0$ we can take $F = 0$, we have  
 	$$
 	  \dot{\omega}^l_\alpha = i (A^l_{\con{\gamma}} E^{\con{\gamma}}_\alpha + E^l_{\con{\gamma}} A^{\con{\gamma}}_\alpha) \theta 
 	  - i E^l_{\con{\gamma}, \alpha} \theta^{\con{\gamma}} - i E^{\con{\alpha}}_{\gamma, \con{l}} \theta^\gamma, 
 	$$
 	which after some calculation implies 
	$$
 	(E^{\con{\gamma}}_{\alpha, \beta})^{\cdot} = 
 	 - i E^{\con{l}}_\alpha E^{\con{\gamma}}_{\beta, \con{l}}
	+ i E^{\con{\beta}}_{\alpha, \con{l}} E^{\con{\gamma}}_l + i E^{\con{l}}_\beta E^{\con{\gamma}}_{\alpha, \con{l}}.
 	$$
 	In a similar manner, from the formula 
 	$$
 	 E^{\con{\gamma}}_{\alpha, \con{\beta}} = Z_{\con{\beta}}(E^{\con{\gamma}}_\alpha) - \omega^l_\alpha(Z_{\con{\beta}}) 
 	 E^{\con{\gamma}}_l + \omega^{\con{\gamma}}_{\con{l}}(Z_{\con{\beta}}) E^{\con{l}}_\alpha
 	$$
 	one finds, for $t = 0$
 	$$
    (E^{\con{\gamma}}_{\alpha, \con{\beta}})^{\cdot} = i E^\rho_{\con{\beta}} E^{\con{\gamma}}_{\alpha, \rho} 
    + i E^{\con{\gamma}}_l E^l_{\con{\beta}, \alpha} + i E^{\con{l}}_\alpha E^l_{\con{\beta}, \gamma}.
 	$$

 	We analyze the  terms with second-order covariant derivatives. Notice that 
 	$$
 	E^{\con{\gamma}}_{\alpha, \con{\beta} \con{\rho}} = Z_{\con{\rho}}(E^{\con{\gamma}}_{\alpha, \con{\beta}}) 
 	- \omega^l_\alpha(Z_{\con{\rho}}) E^{\con{\gamma}}_{l, \con{\beta}} - \omega^{\con{l}}_{\con{\beta}} (Z_{\con{\rho}}) 
 	E^{\con{\gamma}}_{\alpha, \con{l}} + \omega^{\con{\gamma}}_{\con{l}} (Z_{\con{\rho}}) E^{\con{l}}_{\alpha, \con{\beta}}. 
 	$$
 	Since $\omega = 0$ at $t = 0$ and at a given point $p$, this implies 
 	\begin{align*}
 			(E^{\con{\gamma}}_{\alpha, \con{\beta} \con{\rho}})^{\cdot} & = \dot{Z}_{\con{\rho}}(E^{\con{\gamma}}_{\alpha, \con{\beta}}) 
 			+ Z_{\con{\rho}}( \dot{E}^{\con{\gamma}}_{\alpha, \con{\beta}}) 
 		\\ & - \dot{\omega}^l_\alpha(Z_{\con{\rho}}) E^{\con{\gamma}}_{l, \con{\beta}} 
 		- \dot{\omega}^{\con{l}}_{\con{\beta}} (Z_{\con{\rho}}) 
 		E^{\con{\gamma}}_{\alpha, \con{l}} 
 		+ \dot{\omega}^{\con{\gamma}}_{\con{l}} (Z_{\con{\rho}}) E^{\con{l}}_{\alpha, \con{\beta}}.  
 	\end{align*}
 	After some straightforward calculation, one then finds  
 	\begin{align*}
 		& (E^{\con{\gamma}}_{\alpha, \con{\beta} \con{\rho}})^{\cdot} = i E^l_{\con{\rho}} E^{\con{\gamma}}_{\alpha, \con{\beta}l} 
 		+ i E^l_{\con{\rho}, \alpha} E^{\con{\gamma}}_{l, \con{\beta}} + i E^l_{\con{\rho}, \gamma} E^{\con{l}}_{\alpha, \con{\beta}} 
 		- i E^\beta_{\con{\rho},l} E^{\con{\gamma}}_{\alpha, \con{l}} 
 		\\ & + i \left( E^l_{\con{\beta},\con{\rho}} E^{\con{\gamma}}_{\alpha, l} + E^l_{\con{\beta}} E^{\con{\gamma}}_{\alpha, l \con{\rho}}
 		+ E^{\con{\gamma}}_{l,\con{\rho}} E^l_{\con{\beta}, \alpha} + E^{\con{\gamma}}_l E^l_{\con{\beta}, \alpha \con{\rho}} 
 		+ E^{\con{l}}_{\alpha, \con{\rho}} E^l_{\con{\beta}, \gamma} + E^{\con{l}}_\alpha 
 		E^l_{\con{\beta}, \gamma \con{\rho}} \right).  \nonumber
 	\end{align*}
 	In particular, taking the trace we obtain after some cancellation 
 	\begin{align*}
 	 (E^{\con{\gamma}}_{\rho, \con{\beta} \con{\rho}})^{\cdot} & =  i \left( E^l_{\con{\rho}} E^{\con{\gamma}}_{\rho, \con{\gamma} l} 
 	 + E^l_{\con{\gamma}} E^{\con{\gamma}}_{\rho, l \con{\rho}} + E^{\con{\gamma}}_l E^l_{\con{\gamma}, \rho \con{\rho}} 
 	 + E^{\con{l}}_\rho E^l_{\con{\gamma}, \gamma \con{\rho}} \right) 
 	\\ & + i \left( E^l_{\con{\gamma}, \con{\rho}} E^{\con{\gamma}}_{\rho, l} + E^{\con{\gamma}}_{l,\con{\rho}} 
 	E^l_{\con{\gamma},\rho} + E^{\con{l}}_{\rho,\con{\rho}} E^l_{\gamma, \con{\gamma}} 
 	+E^l_{\con{\rho}, \rho} E^{\con{\gamma}}_{l,\con{\gamma}} \right). 
 	\end{align*}

 We have that 
  \begin{align*}
 	\ddot{W} & =  
 i (E^{\con{\gamma}}_{l,\con{\gamma} \con{l}})^{\dot{}} 
- i (E^\gamma_{\con{l},\gamma l})^{\dot{}}
 	 - (\dot{A}^{\con{\gamma}}_l E^l_{\con{\gamma}} + \dot{A}^l_{\con{\gamma}} E^{\con{\gamma}}_l) n 
\\ 	 & +  (\dot{R}_{l \con{\gamma}} F^l_\gamma + R_{l\con{\gamma}} \dot{F}^l_\gamma) + 
(\dot{R}_{\gamma \con{l}} F^{\con{l}}_{\con{\gamma}} + R_{\gamma \con{l}} \dot{F}^{\con{l}}_{\con{\gamma}}). 
  \end{align*}
The second line indeed vanishes since $F^l_\gamma = 0$ at $t = 0$, and since $F^l_l = 0$ implies 
$$
 (R_{l\con{\gamma}} + R_{\gamma \con{l}}) \dot{F}^l_\gamma = \frac{1}{n} (\delta_{l \gamma} + \delta_{\gamma l}) 
 \dot{F}^l_\gamma = \frac{2}{n} W \dot{F}^l_l = 0. 
$$ 
This concludes the proof. 
\end{proof}

 \section{Proof of the  theorems} \label{s:proofs}

 In this section we prove our main results, starting from conformal variations and 
 then passing to variations of the CR structure. 
 
 \

\begin{proof} [Proof of Theorem \ref{t:0}.]
	The constancy of  Webster's curvature is classical and can be obtained as in \cite{JerisonLee}. In fact, 
	from formula \eqref{eq:conf} and an integration by parts we have that 
	\begin{align}
		\tilde{\ci{W}}(J,u^{\frac{4}{{Q-2}}} \theta) & = \frac{\int_M u \left[- \left(2 + \frac 2n \right) \Delta_b u + W_{J,\theta} u \right]  \theta \wedge 
			(d \theta)^n}{\left( \int_{M} u^{\frac{2Q}{Q-2}} \theta \wedge 	(d \theta)^n \right)^{\frac{Q-2}{Q}}} 
		\\ & = \frac{\int_M  \left[ \left(2 + \frac 2n \right) |\nabla_b u|^2 + W_{J,\theta} u^2 \right]  \theta \wedge 
			(d \theta)^n}{\left( \int_{M} u^{\frac{2Q}{Q-2}} \theta \wedge 	(d \theta)^n \right)^{\frac{Q-2}{Q}}}. 
	\end{align}
Given the scaling-invariant character of $\tilde{\ci{W}}$,  when taking conformal variations of the type $(u + tv)^{\frac{2}{n}} \theta$ at $u \equiv 1$ 
we can assume that $\int_M v \, \theta \wedge 
(d \theta)^n = 0$, so we obtain  
$$
  \int_M W_{J,\theta} v \, \theta \wedge 
  (d \theta)^n = 0 \qquad \hbox{ for all } v \hbox{ such that } \int_M v \,  \theta \wedge 
  (d \theta)^n = 0,
$$
implying that $W_{J,\theta}$ is constant. 

	Let us now consider variations in $J$ (leaving then $\theta$ fixed) of $\tilde{\ci{W}}$. We see that 
	the first two terms in the right-hand side of \eqref{eq:first-var-R} vanish after integration, and 
	that the last two terms also vanish pointwise since we can take $F = 0$. Therefore, choosing $E_{\alpha \beta} 
	= A_{\alpha \beta}$, we deduce 
 	the vanishing of the torsion by integration of the formula over $M$. Notice that such variations 
 	are admissible since by the third equation in \eqref{eq:circ} and \eqref{eq:Bianchi} we 
 	have the constraints  given in Lemma \ref{l:E-symm}. 
 	
 	\
 	
 	Let us now check the pseudo-Einstein condition: vanishing of the torsion implies that 
 	the Reeb vector field $T$  corresponding to $\theta$ generates an infinitesimal transverse symmetry, 
 	see e.g. (2.12) and Proposition 2.2 in \cite{Webster}.   By Theorem E in \cite{Lee2}, if we also assume 
 	that $c_1(T_{1,0}(M)) = 0$, then  there exists $u \in C^\infty(M)$ such that $(M,J,e^{2u} \theta)$ is 
 	pseudo-Einstein. In the Appendix it is indeed shown that $u$ can be taken identically zero, 
 	see Proposition \ref{p:p-app}. 
\end{proof}

 Before proving the next theorems, we compute  the second variation of $\tilde{\ci{W}}$ in the 
 conformal directions. The first conformal variation is given by 
 \begin{align}
 	& \frac{d}{dt}|_{t=0} \tilde{\ci{W}}(J,(u+tv)^{\frac{4}{{Q-2}}} \theta)  = 
   2 \frac{\int_{M} \left( b_n \nabla_b u \cdot \nabla_b v + W_{J,\theta} u v\right) \theta \wedge (d \theta)^n}{
   \left( \int_M u^{\frac{2Q}{Q-2}}\theta \wedge (d \theta)^n\right)^{\frac{Q-2}{Q}}}  \\ & - 
2  \frac{\int_M \left( |\nabla_b u|^2 + W_{J,\theta} u^2 \right) \theta \wedge (d \theta)^n 
	}{\left( \int_M u^{\frac{2Q}{Q-2}}\theta \wedge (d \theta)^n\right)^{2\frac{Q-2}{Q}}}
\int_M u^{\frac{Q+2}{Q-2}} v \, \theta \wedge (d \theta)^n. 
  \end{align}
 In this way, if $W_{J,\theta}$ is constant, one sees that criticality occurs when 
\begin{equation}\label{eq:crit-u}
	 Vol_\theta(M)^{\frac{2}{Q}} u^{\frac{4}{Q-2}}  = 1. 
\end{equation}
The second variation at a stationary point is the following  
 \begin{align*} \nonumber
 	& \frac{d^2}{dt^2}|_{t=0} \tilde{\ci{W}}(J,(1+tv)^{\frac{4}{{Q-2}}} \theta)  = 
 	2 \frac{\int_{M} \left( b_n |\nabla_b v|^2 + W_{J,\theta} v^2 \right) \theta \wedge (d \theta)^n}{
 		Vol_{\theta,u}(M)^{\frac{Q-2}{Q}}}  \\ & - 
 	2 \frac{Q+2}{Q-2} \frac{\ W_{J,\theta} Vol_\theta(M) 
 	}{Vol_{\theta,u}(M)^{2\frac{Q-2}{Q}}}
 	\int_M  v^2 \, \theta \wedge (d \theta)^n, 
 \end{align*}
 where $Vol_{\theta,u}(M) = \int_M u^{\frac{2Q}{Q-2}}\theta \wedge (d \theta)^n$. Inserting 
 \eqref{eq:crit-u} into the latter formula we see that the second variation becomes 
 \begin{equation}\label{eq:2nd-var-CR}
 	 \frac{2}{Vol_{\theta,u}(M)^{\frac{Q-2}{Q}}} \int_M \left( b_n |\nabla_b v|^2 
 	- \frac{4}{Q-2} W_{J,\theta} v^2 \right) \theta \wedge (d \theta)^n. 
 \end{equation}
 
 For the standard spheres $(S^{2n+1}, J_0,\theta_0)$, recalling from \cite{JerisonLee} and \cite{Webster} that 
 \begin{equation}\label{eq:const-spheres}
 	   b_n = 2 + \frac{2}{n}; \qquad W_{J_0,\theta_0} = n(n+1), 
 \end{equation}
 we  get the first statements 
 in Theorem \ref{t:1} and Theorem \ref{t:2} (see also \cite{MalchiodiUguzzoni}). 
 
 \begin{rem}
 	More in general, if we are on a pseudo-Einstein manifold with zero torsion other than the 
 	standard sphere, using Theorem 1.1 in \cite{Chiu} and Theorem 3 in \cite{LiWang} for $n = 1$ and 
 	$n > 1$ respectively, 
 	by formula \eqref{eq:2nd-var-CR} the second conformal variation is strictly 
 	positive-definite. 
 \end{rem}
 
 We consider next the second variation of $\tilde{{\ci{W}}}$ on  standard spheres 
 with respect to the deformation of the CR structure.

 \begin{lem} \label{l:0-der}
 	For the standard structure $(S^{2n+1},J_0,\theta_0)$ we have that 
 	\begin{equation}\label{eq:ddot-intR}
 		 	\frac{d^2}{dt^2}|_{t=0} {\tilde{\ci{W}}}(J_{(t)},\theta_0) = -i \, n\int_{S^{2n+1}}{{E_{\alfa}}^{\con{\gamma}}}_{,0}{E_{\con{\gamma}}}^{\alfa} \,
 		 	\theta_0 \wedge (d \theta_0)^n + \textrm{\emph{conj.}},
 	\end{equation}
 where $E = 2 \frac{d}{dt}|_{t=0} J_{(t)}$.  
 \end{lem}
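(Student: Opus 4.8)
The plan is to exploit that the contact form is held fixed, reducing the statement to the second variation of $\int W$, and then to insert the formula \eqref{eq:ddotRR}. Since $\theta=\theta_0$ is kept fixed along the deformation, the volume form $\theta_0\wedge(d\theta_0)^n$ and hence $V_0:=\int_{S^{2n+1}}\theta_0\wedge(d\theta_0)^n$ do not depend on $t$, so $\tilde{\ci{W}}(J_{(t)},\theta_0)=V_0^{-\frac{Q-2}{Q}}\int_{S^{2n+1}}W_{(t)}\,\theta_0\wedge(d\theta_0)^n$; differentiating twice under the integral sign is legitimate, so (up to the fixed positive constant $V_0^{-\frac{Q-2}{Q}}$, which I suppress) the statement reduces to computing $\int_{S^{2n+1}}\ddot{W}\,\theta_0\wedge(d\theta_0)^n$ with $\ddot{W}$ given by \eqref{eq:ddotRR} at $t=0$.

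First I would dispose of all terms of \eqref{eq:ddotRR} except the eight quadratic second-order ones. The term $i\dot{E}^{\con\gamma}_{l,\con\gamma\con l}$ is an iterated Tanaka--Webster covariant divergence, hence integrates to zero against the fixed volume form. On $(S^{2n+1},J_0,\theta_0)$ one has $A\equiv0$, which kills $A^{\con\gamma}_l\dot{E}^l_{\con\gamma}n$; the standard structure is pseudo-Einstein with $W=n(n+1)$, so $R_{\rho\con\sigma}=(n+1)h_{\rho\con\sigma}$ and $R_{l\con\gamma}\dot{F}^l_\gamma=(n+1)\dot{F}^l_l=0$, the trace $F^l_l$ vanishing identically by the antisymmetry $F^\alpha_\beta+F^\beta_\alpha=0$ of Lemma \ref{l:zdot}. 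There remains $-n\dot{A}^{\con\gamma}_l E^l_{\con\gamma}$: by the torsion variation formula \eqref{eq:d-tor}, which on the sphere ($A=0$) reads $\dot{A}^\alpha_{\con\gamma}=-iE^\alpha_{\con\gamma,0}$, the conjugate index pattern gives $\dot{A}^{\con\gamma}_l=\con{\dot{A}^\gamma_{\con l}}=iE^{\con\gamma}_{l,0}$, so this term contributes exactly $-inE^{\con\gamma}_{l,0}E^l_{\con\gamma}$, which after relabeling $l\to\alpha$ is the right-hand side of \eqref{eq:ddot-intR}.

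It then remains to check that the eight quadratic terms $-E^l_{\con\rho}E^{\con\gamma}_{\rho,\con\gamma l}-\dots-E^l_{\con\rho,\rho}E^{\con\gamma}_{l,\con\gamma}$ of \eqref{eq:ddotRR}, together with their conjugates, integrate to zero over $S^{2n+1}$; this is the heart of the matter. The cleanest route I see is to recall how \eqref{eq:ddotRR} was obtained: it is the $t$-derivative at $0$ of the first variation \eqref{eq:first-var-R}, in which the two leading terms $iE^{\con\gamma}_{l,\con\gamma\con l}-iE^\gamma_{\con l,\gamma l}$ are, for every $t$, covariant divergences for the Tanaka--Webster connection of $J_{(t)}$; hence $\int_{S^{2n+1}}\big(iE^{\con\gamma}_{l,\con\gamma\con l}-iE^\gamma_{\con l,\gamma l}\big)_{(t)}\,\theta_0\wedge(d\theta_0)^n$ is identically zero in $t$. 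Differentiating this identity at $t=0$, the $t$-dependence of $E$ contributes $i\dot{E}^{\con\gamma}_{l,\con\gamma\con l}+\textrm{conj.}$ (again of zero integral, being a divergence), while the $t$-dependence of the connection contributes precisely the eight quadratic terms of \eqref{eq:ddotRR} with the displayed signs, together with their conjugates; therefore $\int_{S^{2n+1}}\big[(\text{the eight terms})+\textrm{conj.}\big]\,\theta_0\wedge(d\theta_0)^n=0$. Collecting everything yields \eqref{eq:ddot-intR}. Alternatively, one can verify this vanishing directly by repeated integration by parts, commuting the second covariant derivatives and using the curvature of the sphere, the identity $R_{\rho\con\sigma}=(n+1)h_{\rho\con\sigma}$, and the symmetries $E_{\alpha\beta}=E_{\beta\alpha}$, $E^{\con\gamma}_{\alpha,\beta}=E^{\con\gamma}_{\beta,\alpha}$ of Lemma \ref{l:E-symm}; this is longer but reaches the same conclusion.

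The computation itself is short once \eqref{eq:first-var-R} and \eqref{eq:ddotRR} are in hand; the points requiring care are the conjugation bookkeeping relating $\dot{A}^\alpha_{\con\gamma}$ to $\dot{A}^{\con\gamma}_l$ (the sign of the factor $i$ and the placement of the Reeb derivative $(\cdot)_{,0}$), and the justification that the two leading terms of \eqref{eq:first-var-R} are genuine covariant divergences so that the divergence theorem applies for each $t$ against the fixed volume form. I do not expect any serious obstacle.
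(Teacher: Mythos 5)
Your proof is correct, and its skeleton coincides with the paper's: since $\theta_0$ is fixed you integrate $\ddot W$ from \eqref{eq:ddotRR} against $\theta_0\wedge(d\theta_0)^n$, discard the $\dot E$-, $\dot F$- and torsion-terms (on the sphere $A\equiv 0$, $R_{\rho\con{\sigma}}=(n+1)\delta_{\rho\con{\sigma}}$, and $\dot F^l_l=0$ by the antisymmetry of $F$), and convert $-n\dot A^{\con{\gamma}}_l E^l_{\con{\gamma}}$ via \eqref{eq:d-tor} (with $A=0$, $F=0$ at $t=0$) into $-i\,nE^{\con{\gamma}}_{l,0}E^l_{\con{\gamma}}$, exactly as the paper does; the paper dismisses the $\dot E,\dot F$ block by remarking that it is the first variation in the direction $\dot E$ at a critical point, which is equivalent to your pointwise/divergence argument. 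Where you genuinely differ is the treatment of the eight quadratic terms: the paper integrates by parts and uses the symmetries of Lemma \ref{l:E-symm} to cancel them pairwise (first with seventh, second with fifth, third with sixth, fourth with eighth), whereas you note that this block is precisely the connection-variation part of the divergence terms $iE^{\con{\gamma}}_{l,\con{\gamma}\con{l}}-iE^{\gamma}_{\con{l},\gamma l}$ in \eqref{eq:first-var-R}, whose integral against the fixed volume form vanishes identically in $t$, so differentiating that identity and subtracting the $\dot E$-divergence kills the whole block at once. This is a valid and arguably cleaner argument; it requires only that \eqref{eq:first-var-R} hold for every $t$ with covariant derivatives taken in the $t$-dependent Tanaka--Webster connection, plus the standard pseudohermitian divergence lemma, which is the same ingredient underlying the paper's integrations by parts -- and your fallback via explicit integration by parts is exactly the paper's route. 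One cosmetic point: like the paper's own proof, you suppress the positive factor $\bigl(\int_{S^{2n+1}}\theta_0\wedge(d\theta_0)^n\bigr)^{-\frac{Q-2}{Q}}$ relating $\tilde{\ci{W}}$ to $\ci{W}$; you at least flag this, and it is immaterial for the sign analysis that follows.
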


\begin{proof}
	Since $\theta_0$ remains fixed, we just need to integrate $\ddot{W}$ with respect to the 
	volume form $\theta_0 \wedge (d \theta_0)^n$. 
	
	Recalling \eqref{eq:ddotRR}, we first notice that the terms involving $\dot{E}$ and $\dot{F}$ vanish since they correspond to 
	 the first variation of $\tilde{\ci{W}}$ in the direction $\dot{E}$, but we are at a stationary point. 
	 
	 Concerning the quadratic terms in $E$, we observe that after integrating and using Lemma \ref{l:E-symm}, 
	 we obtain cancellation  in \eqref{eq:ddotRR} of the first with the seventh, of the second with the fifth, of the third with 
	 the sixth and of the fourth with the eighth. We are then left with 
	 $$
	   \ddot{\ci{W}}= - n \int_{S^{2n+1}}  (\dot{A}^{\con{\gamma}}_l E^l_{\con{\gamma}} + \dot{A}^l_{\con{\gamma}} E^{\con{\gamma}}_l) 
	   \, \theta_0 \wedge (d \theta_0)^n. 
	 $$
	 Recalling formula \eqref{eq:d-tor} and the fact that we can take $F^\alpha_\beta = 0$ at $t = 0$, 
	 we obtain the desired conclusion. 
\end{proof}

 \
 
 To understand the second variation formula in \eqref{eq:ddot-intR} on the 
 sphere $S^{2n+1}$, instead of employing the above 
 moving frame approach, we use instead a basis of the complexified tangent space that is 
 induced from the ambient space $\C^{n+1}$, introduced in \cite{Geller}. This basis 
 does not consist of linearly independent vectors, but it has the advantage of leading to 
 computable quantities, with  coefficients that are either constant or  that are powers 
 of the $z$- and $\con{z}$-coordinates.

 Let
 \begin{equation}\label{eq:Zjk}
 	Z_{jk}=\con{z}_j\frac{\de}{\de z_k} - \con{z}_k\frac{\de}{\de z_j}, \quad \theta_{jk}= z_jdz_k-z_kdz_j, 
 	\qquad j \neq k. 
 \end{equation}
 We have that
\begin{align}\label{eq:th-zh} \nonumber
	\theta_{\ell m}(Z_{jk}) & = (z_{\ell}dz_m-z_mdz_{\ell})\left(\con{z}_j\frac{\de}{\de z_k} - \con{z}_k\frac{\de}{\de z_j}\right) \\ 
	& = z_{\ell}\con{z}_j\delta_{km}-z_{\ell}\con{z}_k\delta_{jm} -z_m\con{z}_j\delta_{k\ell}+ z_m\con{z}_k\delta_{j\ell}.
\end{align}
 As proven in \cite{Geller}, every form of type $(0,1)$ can be written as
 $$\eta= \sum_{0\le j<k\le n}\eta(\con{Z}_{jk})\con{\theta}_{jk}.$$
 We warn that the coefficients $\eta(\con{Z}_{jk})$ are some functions which may not 
 coincide with $\eta$ applied to $\con{Z}_{jk}$.

 Similarly, any form of type $(1,0)$ can be written as
 $$\eta= \sum_{0\le j<k\le n}\eta(Z_{jk})\theta_{jk},$$
 and any vector field of type $(0,1)$  as
 $$X = \sum_{0\le j<k\le n}\con{\theta}_{jk}(X)\con{Z}_{jk}.$$
  Starting with tensor products of objects of the above form, 
  by linearity a tensor $S$ of type $((0,1);(1,0))$ can be written as
 \begin{equation}\label{eq:dec-S}
 	S=\sum_{j<k,\ell<m}S(\con{\theta}_{jk},Z_{\ell m})\con{Z}_{jk}\otimes\theta_{\ell m}. 
 \end{equation}

 Define the musical flat operator $\sharp^{-1} : \con{\mathcal{H}} \to \Omega^{1,0}(M)$ by 
 $$
 \sharp^{-1}(\con{Z}_{jk}) = i d \theta (\cdot, \con{Z}_{jk}). 
 $$


 \begin{lem}\label{l:cov-der-Zij}
 	We have the following relations
 	$$
 	 \nabla_T Z_{jk} = -iZ_{jk}, \qquad \nabla_{Z_{jk}} Z_{pq} = 0 \quad \hbox{ for all } j, k, p, q; 
 	$$
 	$$
 	\nabla_{Z_{jk}} \con{Z}_{lm} 
 	= (\delta_{kl} \con{z}_j - \delta_{jl} \con{z}_k) (\con{\partial}_m - z_m \sigma^\sharp) 
 	+  (\delta_{jm} \con{z}_k - \delta_{lm} \con{z}_j) (\con{\partial}_l - z_l \sigma^\sharp), 
 	$$
 	 where $\sigma^\sharp = \sum_{\nu=1}^{n+1} \con{z}_\nu \con{\partial}_\nu$. 
 	 Moreover, if $\sharp^{-1}$ is as above, one has that 
 	 $$
 	 \sharp^{-1}(\con{Z}_{jk}) = \theta_{jk} \qquad \hbox{ for all } j,k. 
 	 $$
 \end{lem}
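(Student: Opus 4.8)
The plan is to verify the four identities by direct computation, using the characterisation of the Tanaka--Webster connection $\nabla$: it preserves $\mathcal{H}$ (hence also $\bar{\mathcal{H}}$, being real), it is compatible with the Levi metric $L_{\theta_0}$, it satisfies $\nabla T_0=0$ (see \eqref{eq:der-cov}), and its torsion satisfies $\operatorname{Tor}(X,Y)=0$ for $X,Y\in\Gamma(T^{1,0})$ and $\operatorname{Tor}(X,\bar Y)$ a multiple of $L_{\theta_0}(X,\bar Y)T_0$; moreover --- this being special to the model sphere --- the pseudohermitian torsion vanishes, so $\operatorname{Tor}(T_0,\cdot)=0$ (see \cite{Webster}). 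The only extra data needed are the flat connection $D$ of $\mathbf{C}^{n+1}$, the expression $d\theta_0=i\sum_\alpha dz_\alpha\wedge d\bar z_\alpha$, and the Levi form of the spanning fields, $L_{\theta_0}(Z_{jk},\bar Z_{pq})=\bar z_jz_p\delta_{kq}-\bar z_jz_q\delta_{kp}-\bar z_kz_p\delta_{jq}+\bar z_kz_q\delta_{jp}$, read off from \eqref{eq:Zjk}. (Alternatively, one may reduce each identity to a computation at the point $(1,0,\dots,0)$, using that $U(n+1)$ acts transitively on $S^{2n+1}$ by pseudohermitian automorphisms that permute the $Z_{jk}$ linearly with constant coefficients.)

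The first identity is immediate: a one-line bracket computation from the definition of $T_0$ and from \eqref{eq:Zjk} gives $[T_0,Z_{jk}]=-iZ_{jk}$. Since $\nabla_{Z_{jk}}T_0=0$ by $\nabla T_0=0$ and $\operatorname{Tor}(T_0,Z_{jk})=0$, the definition of the torsion gives $\nabla_{T_0}Z_{jk}=\nabla_{Z_{jk}}T_0+[T_0,Z_{jk}]=-iZ_{jk}$ (and $\nabla_{T_0}\bar Z_{jk}=i\bar Z_{jk}$ by conjugation).

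For the horizontal covariant derivatives I would first record the ambient derivatives, which are immediate because $Z_{jk}$ annihilates anti-holomorphic coefficients and the fields $\partial_{\bar z}$ are $D$-parallel:
$$D_{Z_{jk}}Z_{pq}=0,\qquad D_{Z_{jk}}\bar Z_{lm}=(\bar z_j\delta_{kl}-\bar z_k\delta_{jl})\partial_{\bar z_m}-(\bar z_j\delta_{km}-\bar z_k\delta_{jm})\partial_{\bar z_l}.$$
One then checks that $\bar\partial_p-z_p\sigma^\sharp$, with $\sigma^\sharp=\sum_\nu\bar z_\nu\bar\partial_\nu$, is precisely the $\bar{\mathcal{H}}$-component of $\partial_{\bar z_p}$ along $S^{2n+1}$: it is of type $(0,1)$, tangent to the sphere, and annihilated by $\theta_0$, with $L_{\theta_0}(Z_{cd},\bar\partial_p-z_p\sigma^\sharp)=\bar z_c\delta_{dp}-\bar z_d\delta_{cp}$. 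The assertion is then that $\nabla_{Z_{jk}}Z_{pq}$ and $\nabla_{Z_{jk}}\bar Z_{lm}$ are the $\mathcal{H}$- and $\bar{\mathcal{H}}$-components of the above ambient derivatives, which yields $\nabla_{Z_{jk}}Z_{pq}=0$ and the stated formula for $\nabla_{Z_{jk}}\bar Z_{lm}$. To prove this, I would verify that the linear connection determined by $\nabla_{Z_{jk}}Z_{pq}=0$, $\nabla_{Z_{jk}}\bar Z_{lm}$ as stated, $\nabla_{T_0}Z_{jk}=-iZ_{jk}$, $\nabla T_0=0$ and the conjugate relations is compatible with $L_{\theta_0}$ and has the torsion normalisations above; by uniqueness of the Tanaka--Webster connection it must then equal $\nabla$. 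Compatibility with $L_{\theta_0}$ reduces, via the displayed Levi-form formula and $\nabla_{Z_{jk}}Z_{cd}=0$, to the identity $Z_{jk}\big(L_{\theta_0}(Z_{pq},\bar Z_{cd})\big)=L_{\theta_0}\big(Z_{pq},\pi_{\bar{\mathcal{H}}}(D_{Z_{jk}}\bar Z_{cd})\big)$, checked by collecting the index contractions, and the torsion normalisations reduce to computing the brackets $[Z_{jk},Z_{pq}]=0$ and $[Z_{jk},\bar Z_{lm}]$ directly. This last bookkeeping, especially behind the formula for $\nabla_{Z_{jk}}\bar Z_{lm}$, is the only lengthy part of the argument and the main place where index or sign errors can arise.

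Finally, $\sharp^{-1}(\bar Z_{jk})=\theta_{jk}$ is immediate: substituting $d\theta_0=i\sum_\alpha dz_\alpha\wedge d\bar z_\alpha$ and \eqref{eq:Zjk} into the definition of $\sharp^{-1}$ and evaluating on $(1,0)$ vectors gives $z_jdz_k-z_kdz_j=\theta_{jk}$, as in \eqref{eq:th-zh}.
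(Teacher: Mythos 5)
Your proposal is correct in substance, and for the first and last identities it coincides with the paper's argument (a bracket computation combined with $\nabla T_0=0$ and the vanishing of the pseudohermitian torsion; direct evaluation of $i\,d\theta_0(\cdot,\con{Z}_{jk})$ against the fields $Z_{lm}$). For the two horizontal identities your route is genuinely different: the paper simply quotes (2.4) of Cheng for the formula for $\nabla_{Z_{jk}}\con{Z}_{lm}$ and then obtains $\nabla_{Z_{jk}}Z_{pq}=0$ from Tanaka's identity $d\theta(\nabla_XY,\con{Z})=X\,d\theta(Y,\con{Z})-d\theta(Y,[X,\con{Z}]_{\con{\mathcal{H}}})$ plus an explicit index computation and nondegeneracy of the Levi form, whereas you compute the flat ambient derivatives, identify $\con{\partial}_m-z_m\sigma^\sharp$ as the $\con{\mathcal{H}}$-projection of $\con{\partial}_m$, and propose to recognize the projected ambient connection as the Tanaka--Webster connection via its uniqueness. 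This is a legitimate, self-contained alternative (it in effect reproves the formula the paper imports from Cheng), and the bookkeeping it requires --- compatibility with the Levi form tested on the spanning fields, and the brackets $[Z_{jk},Z_{pq}]=0$, $[Z_{jk},\con{Z}_{lm}]$ --- is essentially the same computation the paper carries out. Incidentally, your ambient formula, with coefficient $\delta_{jm}\con{z}_k-\delta_{km}\con{z}_j$ in front of $\con{\partial}_l$, is the correct one; the $\delta_{lm}$ appearing in the statement is a typo.

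One point needs care before your uniqueness argument is airtight: the $Z_{jk}$ are not linearly independent --- they satisfy the function-linear relations $\con{z}_l Z_{jk}+\con{z}_j Z_{kl}+\con{z}_k Z_{lj}=0$ and their conjugates --- so ``the linear connection determined by'' prescribed values on $\{T_0,Z_{jk},\con{Z}_{jk}\}$ is not well-posed as stated: you must either verify that your prescriptions respect these relations in both slots (via tensoriality and the Leibniz rule), or, better, define the candidate connection invariantly (horizontal covariant derivatives as the $\mathcal{H}\oplus\con{\mathcal{H}}$-projection of the flat ambient derivative, $\nabla T_0=0$, and $\nabla_{T_0}$ as prescribed) and only then check the Tanaka--Webster axioms. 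Alternatively, you can dispense with the candidate altogether and argue in the paper's direction: the torsion normalization gives $\nabla_{Z_{jk}}\con{Z}_{lm}=[Z_{jk},\con{Z}_{lm}]_{\con{\mathcal{H}}}$, which equals the $\con{\mathcal{H}}$-projection of $D_{Z_{jk}}\con{Z}_{lm}$ because $D_{\con{Z}_{lm}}Z_{jk}$ is of type $(1,0)$; this is exactly your projection formula, and then metric compatibility together with nondegeneracy of the Levi form yields $\nabla_{Z_{jk}}Z_{pq}=0$, which is precisely the compatibility check you describe.
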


 \begin{proof}
 	 Since the pseudohermitian torsion is zero and $T$ is parallel
 	 \begin{align*} \nonumber
 	 	\nabla_TZ_{jk} & = [T,Z_{jk}]=\frac{i}{2}\sum\left[z_{\alfa}\frac{\de}{\de z_{\alfa}}-\con{z}_{\alfa}\frac{\de}{\de \con{z}_{\alfa}},\con{z}_j\frac{\de}{\de z_k} - \con{z}_k\frac{\de}{\de z_j}\right] \\ & = 
 	 	 \frac{i}{2}\left(-\con{z}_j\frac{\de}{\de z_k}+\con{z}_k\frac{\de}{\de z_j} -\con{z}_j\frac{\de}{\de z_k} +\con{z}_k\frac{\de}{\de z_j}\right) \\ & =i\left(-\con{z}_j\frac{\de}{\de z_k}+\con{z}_k\frac{\de}{\de z_j} \right)= -iZ_{jk}, \nonumber
 	 \end{align*}
 	giving the first assertion. The third one is proved in  (2.4) of \cite{Cheng}. 
 	
 	Let us now turn to the second assertion. From Lemma 3.2 (2) in \cite{Tanaka} (in our notation we have an extra factor $1/2$ in front of the contact form) one  has that 
 	$$
 	d \theta (\nabla_X Y, \con{Z}) = X d \theta (Y, \con{Z}) - d \theta (Y, [X,\con{Z}]_{\con{\mathcal{H}}}); 
 	\qquad X, Y \in \mathcal{H}, Z \in \con{\mathcal{H}}, 
 	$$
 	where $[X,\con{Z}]_{\con{\mathcal{H}}}$ stands for the anti-holomorphic projection. From 
 	(the conjugation of) the formula in Lemma 3.2 (i) of \cite{Tanaka} and Lemma \ref{l:cov-der-Zij}
 	one has that 
 	\begin{align*}
 		& [Z_{jk}, \con{Z}_{lm}]_{\con{\mathcal{H}}}  = \nabla_{Z_{jk}} \con{Z}_{lm} \\ & 
 		= (\delta_{kl} \con{z}_j - \delta_{jl} \con{z}_k) (\con{\partial}_m - z_m \sigma^\sharp) 
 		+  (\delta_{jm} \con{z}_k - \delta_{lm} \con{z}_j) (\con{\partial}_l - z_l \sigma^\sharp). 
 	\end{align*}
 	A direct computation 
 	shows, after some cancellation 
 	\begin{align*}
 		d \theta(Z_{pq}, [Z_{jk},\con{Z}_{lm}]_{\con{\mathcal{H}}}) & = 2 i \con{z}_p \left[ \delta_{kl} \con{z}_k \delta_{qm} 
 		+ \delta_{jm} \con{z}_k \delta_{ql} \right]
 		\\ & - 2 i \con{z}_q \left[ \delta_{kl} \con{z}_j \delta_{pm} + \delta_{jm} \con{z}_k \delta_{pl} \right]. 
 	\end{align*}
 	On the other hand, we have that 
 	$$
 	d \theta(Z_{pq}, \con{Z}_{lm})  = 2 i \con{z}_p (z_l \delta_{qm} - z_m \delta_{ql}) - 2 i \con{z}_q (z_l \delta_{pm} 
 	- z_m \delta_{pl}),
 	$$
 	which implies 
 	\begin{align*}
 		& Z_{jk} (d \theta(Z_{pq}, \con{Z}_{lm})) = \con{z}_j \left[ 2 i \con{z}_p (\delta_{kl} \delta_{qm} - \delta_{km} \delta_{ql}) 
 		- 2 i \con{z}_q (\delta_{kl} \delta_{pm} - \delta_{km} \delta_{pl}) \right] \\ & - 
 		\con{z}_k \left[ 2 i \con{z}_p (\delta_{jl} \delta_{qm} - \delta_{jm} \delta_{ql}) - 2 i \con{z}_q 
 		(\delta_{jl} \delta_{pm} - \delta_{jm} \delta_{pl})  \right]. 
 	\end{align*}
 	This means that $Z_{jk} (d \theta(Z_{pq}, \con{Z}_{lm}))  - d \theta(Z_{pq}, [Z_{jk},\con{Z}_{lm}]_{\con{\mathcal{H}}})  = 0$, 
 	and therefore $d\theta(\nabla_{Z_{jk}} Z_{pq}, \con{Z}_{lm}) = 0$ for all $l, m$: hence we obtain 
 	the second assertion too. 
 	
 	To prove the last property, we notice that 
 	$$
 	 i d \theta = \sum_\alpha (d z_\alpha \wedge d \con{z}_\alpha), 
 	$$
 	and therefore 
 	$$
 	  i d \theta(Z_{lm}, \con{Z}_{jk}) = \con{z}_l z_j \delta_{km} - \con{z}_l z_k 
 	  \delta_{jm} - \con{z}_m z_j \delta_{lk} + \con{z}_m z_k \delta_{lj}. 
 	$$
 	From \eqref{eq:th-zh} we then get  
 	$$
 	  i d \theta(Z_{lm}, \con{Z}_{jk}) = \theta_{jk}(Z_{lm}) \qquad \hbox{ for all indices } j, k, l, m, 
 	$$
 	proving also the last assertion. 
 \end{proof}

 
 

We can now prove our second and third main results.

\

\begin{proof} [Proof of Theorem \ref{t:1}]
	The first statement follows from formula \eqref{eq:2nd-var-CR}, recalling \eqref{eq:const-spheres}, 
	so it remains to prove formula \eqref{eq:dd3}. 
	
	Notice that, by Leibnitz's rule 
	\begin{equation}\label{eq:E-0}
		E_{, 0} := \nabla_T (E_1^{\con{1}} \theta^1 \otimes Z_{\con{1}}) = 
		T(E_1^{\con{1}}) \theta^1 \otimes Z_{\con{1}} + E_1^{\con{1}} ((\nabla_T \theta^1) 
		\otimes Z^{\con{1}} + \theta^1 \otimes (\nabla_T Z_{\con{1}})). 
	\end{equation}
	In our notation, comparing \eqref{eq:Z1th1} and \eqref{eq:Zjk}, and recalling 
	Lemma \ref{l:cov-der-Zij} we have that 
	$$
	  \nabla_T Z_{\con{1}} = i Z_{\con{1}}; \qquad \nabla_T \theta^1 = i \theta^1. 
	$$
	This implies  
	$$
	 E_{, 0} = i \left( \frac{m}{2} + 2 \right)  \qquad \hbox{ for } E_1^{\con{1}} \in  \Gamma_m,
	 $$
	 and in turn 
	 $$
	 \ddot{\ci{W}}= \sum_{m \in \mathbf{Z}} (m+4)\int_{S^3}\left|E^{(m)}\right|^2 \theta_0 \wedge d \theta_0, 
	 $$
	 by Lemma \ref{l:0-der}. This gives the desired conclusion. 
\end{proof}

 \

  \begin{proof} [Proof of Theorem \ref{t:2}]
 In the notation of \cite{Bland} and \cite{BlandDuchamp1}, the eigenspace of a 
 deformed structure corresponding to the eigenvalue $i$ is written as 
 $$
   \hat{\mathcal{H}} = \left\{ X - \bar{\phi}(X) \; : \; X \in \mathcal{H}_0 \right\}; 
   \qquad \phi : \mathcal{H} \to \mathcal{H}. 
 $$
 If $\phi$ is infinitesimal, then we have the following 
 relation to the tensor $E$ from Lemma \ref{l:def-E}:  
 $$
  \phi^{\con{\gamma}}_\alpha = i E^{\con{\gamma}}_\alpha. 
 $$
By Theorem 4.1 in \cite{BlandDuchamp1}, $\phi$ is of the 
form 
\begin{equation}\label{eq:BD}
	  \phi = \con{\partial}_b (\con{\partial}_b f)^\sharp + h_\sigma(h), 
\end{equation}
 where $f, h$ are a complex-valued function and a two-form 
 of type $(0,2)$  whose negative Fourier components are zero (with $h$ determined by $f$).  Here 
 $\con{\partial}_b$ denotes the holomorphic differential of $f$ and 
 $\sharp$ the musical isomorphism from $\Omega^{1,0}(S^{2n+1})$ to $T^{0,1}(S^{2n+1})$. 
 In analogy with \eqref{eq:Gamma-m}, the 
 {\em $m$-th Fourier eigenspace} $\Gamma_m$ for a tensor on $S^{2n+1}$ is defined by the action 
 of the flow generated by the vector field $T$. 
 
 Both the operators $\con{\partial}_b$ and $(\con{\partial}_b \cdot)^\sharp$ commute 
 with the Lie derivative by $T$, and it is noticed on page 102 of \cite{BlandDuchamp2} 
 that $h_\sigma$ preserves the Fourier decomposition. Therefore the tensor 
 $\phi$, and hence $E$ as well, only consist of non-negative Fourier modes. 
 
 As for \eqref{eq:dec-S}, let us write 
 \begin{equation}\label{eq:EEE}
 	 E=\sum_{j<k,\ell<s}E(\con{\theta}_{jk},Z_{\ell s})\con{Z}_{jk}\otimes\theta_{\ell s}:  
 \end{equation}
 since both $\con{Z}_{jk}$ and $\theta_{\ell s}$ are invariant under the action 
 of $T$,  we must have that $E(\con{\theta}_{jk},Z_{\ell s})$ also has 
 only non-negative Fourier modes. 
 
Arguing then as for \eqref{eq:E-0} and using the latter formula, we still obtain that 
 $E_{,0} =  i \left( \frac{m}{2} + 2 \right)$ for $E \in  \Gamma_m$, which by 
 Lemma \ref{l:0-der} 
 gives 
 $$
   \ddot{\ci{W}}= n \sum_{m \in {Z}} (m+4)\int_{S^3}\left|E^{(m)}\right|^2 \theta_0 \wedge d \theta_0. 
 $$
  Recalling that $E^{(m)} = 0$ for $m < 0$, we obtain the conclusion.

One comment on the above proof is due, since the coefficients in the expansion 
\eqref{eq:EEE} for $E$ are not uniquely determined. Near each point of 
$S^{2n+1}$ one could choose linearly independent bases of vector fields and forms, 
which would give the asserted property on the Fourier modes of $E(\con{\theta}_{jk},Z_{\ell s})$, 
proving that in any case $E_{,0} =  i \left( \frac{m}{2} + 2 \right)$ for $E \in  \Gamma_m$. 
 \end{proof}

 \section[Appendix by Xiaodong Wang]{Appendix by Xiaodong Wang\footnote{Department of Mathematics,
 		Michigan State University, 
 		619 Red Cedar Road
 		East Lansing, MI 48824 e-mail: xwang@math.msu.edu}}

 It is a well known fact in Kahler geometry that a Kahler metric $\omega$ on a
 closed complex manifold $M$ with constant scalar curvature must be
 Kahler-Einstein if the Kahler class $\left[  \omega\right]  $ is proportional
 to the first Chern class $c_{1}\left(  M\right)  $. In this appendix we discuss a
 CR analogue of this result.
 
 We still follow \cite{Lee2} as our standard reference on CR geometry. Let $M$ be a
 CR manifold of dimension $2n+1$. The first Chern class of the complex vector
 bundle $T^{1,0}M$ will be simply denoted by $c_{1}\left(  M\right)  $.
 Given a pseudohermitian structure $\theta$, we always work with the
 Tanaka-Webster connection $\nabla$, and $2\pi c_{1}\left(  M\right)  $ is
 then represented by the closed $2$-form%
 \begin{equation}
 	\rho_{\theta}=i\left[  R_{\mu\overline{\nu}}\theta^{\mu}\wedge
 	\theta^{\overline{\nu}}+A_{\alpha\gamma,\overline{\alpha}}\theta^{\gamma
 	}\wedge\theta-A_{\overline{\gamma}\overline{\alpha},\alpha}\theta
 	\overline{^{\gamma}}\wedge\theta\right],\label{cform}%
 \end{equation}
 where $R_{\mu\overline{\nu}}$ is the Ricci curvature and $A$ is the torsion of
 the Tanaka-Webster connection. Throughout this section, we always work with a
 local unitary frame $\left\{  Z_{\alpha}:\alpha=1,\cdots,n\right\}  $.
 
 \begin{lem}\label{l:app-1}
 	Suppose $\phi=f_{\mu\overline{\nu}}\theta^{\mu}\wedge\theta^{\overline{\nu}}$
 	is a $\left(  1,1\right)  $-form. Let $\Lambda\left(  \phi\right)  =\sum
 	_{\mu=1}^{n}f_{\mu\overline{\mu}}$ be its trace. Then%
 	\[
 	-d^{\ast}\phi=f_{\alpha\overline{\nu},\overline{\alpha}}\theta^{\overline{\nu
 	}}+f_{\mu\overline{\alpha},\alpha}\theta^{\mu}+i\Lambda\left(
 	\phi\right)  \theta,
 	\]
 	where $d^{\ast}$ the dual of $d$ with respect to the adapted Riemannian metric.
 \end{lem}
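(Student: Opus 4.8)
The plan is to compute $-d^\ast\phi$ from the integration-by-parts characterization of the codifferential, which avoids writing down the Hodge star explicitly on the $(2n+1)$-dimensional manifold. Recall that $d^\ast$ is determined, via the adapted Riemannian metric and its volume element (a fixed constant multiple of $\theta\wedge(d\theta)^n$), by $\int_M\langle d^\ast\phi,\alpha\rangle\,dV=\int_M\langle\phi,d\alpha\rangle\,dV$ for every $1$-form $\alpha$; here $\langle\cdot,\cdot\rangle$ is the complex-bilinear extension of the metric, for which $\langle\theta^\mu,\theta^{\overline\nu}\rangle=\delta^{\mu\nu}$, $\langle\theta^\mu,\theta^\nu\rangle=0$, $\langle\theta,\theta^\mu\rangle=0$. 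So first I would write $\alpha=\alpha_\mu\theta^\mu+\alpha_{\overline\nu}\theta^{\overline\nu}+\alpha_0\theta$ and extract the $(1,1)$-part of $d\alpha$, which is the only piece pairing nontrivially with $\phi$. Using the structure equations together with $d\theta=i\,h_{\alpha\overline\beta}\theta^\alpha\wedge\theta^{\overline\beta}$, and checking that the connection forms $\omega^\beta_\mu$ arising from $d\alpha_\mu\wedge\theta^\mu$ cancel against those from $\alpha_\mu\,d\theta^\mu$, one finds that the $(1,1)$-part of $d\alpha$ equals $\big(\alpha_{\overline\sigma,\rho}-\alpha_{\rho,\overline\sigma}+i\,\alpha_0 h_{\rho\overline\sigma}\big)\theta^\rho\wedge\theta^{\overline\sigma}$, where the last term comes precisely from $\alpha_0\,d\theta$ and will be responsible for the $i\Lambda(\phi)\theta$ summand.

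Next I would pair this with $\phi=f_{\mu\overline\nu}\theta^\mu\wedge\theta^{\overline\nu}$ using the induced inner product on $2$-forms (under which $\{\theta^\mu\wedge\theta^{\overline\nu}\}$ is orthonormal up to the fixed constant, since $h_{\alpha\overline\beta}=\delta_{\alpha\beta}$), integrate over $M$, and integrate by parts. The two derivative terms are handled by the Tanaka–Webster divergence theorem $\int_M V^\alpha_{,\alpha}\,\theta\wedge(d\theta)^n=0$ and its conjugate, valid for any tensor field, see \cite{Lee2}: this moves the covariant derivative off $\alpha$, turning the $\alpha_{\overline\sigma,\rho}$-term into one with coefficient $\propto f_{\mu\overline\alpha,\alpha}$ multiplying $\alpha_\mu$, and the $\alpha_{\rho,\overline\sigma}$-term into one with coefficient $\propto f_{\alpha\overline\nu,\overline\alpha}$ multiplying $\alpha_{\overline\nu}$; the $\alpha_0$-term needs no integration by parts and directly yields $i\Lambda(\phi)$. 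Reading off the coefficients of $\alpha_\mu$, $\alpha_{\overline\nu}$ and $\alpha_0$ and comparing with $\int_M\langle d^\ast\phi,\alpha\rangle\,dV$ produces exactly the three asserted components of $-d^\ast\phi$.

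The only genuine difficulty is the bookkeeping of signs and normalizing constants: the precise constant relating $dV$ to $\theta\wedge(d\theta)^n$, the normalization of the metric on $2$-forms, the orientation entering the pairing $\langle a\wedge b,c\wedge d\rangle=\langle a,c\rangle\langle b,d\rangle-\langle a,d\rangle\langle b,c\rangle$, and the conventions in $d\theta$ and in the definition of the covariant derivatives — all of these must be fixed consistently so that no spurious factor of $2$ or $n$ survives and the overall sign is $-1$. I would pin these down once, working at a point in a frame with $\omega^\beta_\alpha=0$ (Lemma \ref{l:omegazero}) to kill the connection terms, or by checking against $(S^{2n+1},\theta_0)$. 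An essentially equivalent alternative is to use $d^\ast=-\sum_A\iota_{E_A}\nabla^g_{E_A}$ over an adapted orthonormal frame and substitute the comparison between the Levi-Civita connection of the adapted metric and the Tanaka–Webster connection from \cite{DragomirTomassini}, in which case the $i\Lambda(\phi)\theta$ term emerges from the $d\theta$-part of the difference tensor; since that route carries heavier bookkeeping, I would keep the integration-by-parts argument as the primary one.
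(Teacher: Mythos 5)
Your primary argument is sound and takes a genuinely different route from the paper. The paper's proof is a pointwise computation: it starts from $-d^{\ast}\phi=T\lrcorner\widetilde{\nabla}_{T}\phi+Z_{\alpha}\lrcorner\widetilde{\nabla}_{\overline{Z}_{\alpha}}\phi+\overline{Z}_{\alpha}\lrcorner\widetilde{\nabla}_{Z_{\alpha}}\phi$ in an adapted frame and substitutes the Dragomir--Tomassini comparison between the Levi-Civita connection $\widetilde{\nabla}$ of the adapted metric and the Tanaka--Webster connection, the $\tfrac12 JX$ and $\tfrac12 d\theta(X,Y)T$ corrections producing the $i\Lambda(\phi)\theta$ term and the torsion dropping out because $A$ anticommutes with $J$ --- i.e.\ exactly the ``alternative'' you mention at the end and set aside. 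Your adjoint/integration-by-parts route is correct: the $(1,1)$-part of $d\alpha$ is indeed $\bigl(\alpha_{\overline{\sigma},\rho}-\alpha_{\rho,\overline{\sigma}}+i\alpha_{0}h_{\rho\overline{\sigma}}\bigr)\theta^{\rho}\wedge\theta^{\overline{\sigma}}$, it is the only part pairing with $\phi$, Lee's divergence lemma handles the two derivative terms, and testing against all $1$-forms determines $d^{\ast}\phi$ pointwise. What your route buys is that the Levi-Civita/Tanaka--Webster comparison is never needed and the $\theta$-component is transparently the $\alpha_{0}\,d\theta$ contribution; what it costs is that it is global (it uses the divergence theorem on the closed manifold $M$), whereas the paper's argument is purely local and shows directly why the torsion never enters.

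One concrete warning about the bookkeeping you defer: carried out consistently (with the complex-bilinear pairing one has $\langle\theta^{\mu}\wedge\theta^{\overline{\nu}},\theta^{\rho}\wedge\theta^{\overline{\sigma}}\rangle=-\delta_{\mu\sigma}\delta_{\nu\rho}$), your computation yields $-d^{\ast}\phi=f_{\alpha\overline{\nu},\overline{\alpha}}\theta^{\overline{\nu}}-f_{\mu\overline{\alpha},\alpha}\theta^{\mu}+i\Lambda(\phi)\theta$, with a minus sign on the $\theta^{\mu}$-term, not the statement as printed. This sign is forced by reality: since $d^{\ast}$ is a real operator and a real $(1,1)$-form satisfies $\overline{f_{\mu\overline{\nu}}}=-f_{\nu\overline{\mu}}$, the two derivative coefficients must be related by $\overline{f_{\alpha\overline{\mu},\overline{\alpha}}}=-f_{\mu\overline{\alpha},\alpha}$, so they cannot both appear with a plus sign. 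The discrepancy traces to the first term in the paper's expansion of $\overline{Z}_{\alpha}\lrcorner\widetilde{\nabla}_{Z_{\alpha}}\phi$, which with the standard interior-product convention should read $-Z_{\alpha}f_{\mu\overline{\alpha}}\theta^{\mu}$; it is immaterial for Proposition \ref{p:p-app}, where both derivative terms vanish by the Bianchi identities and constancy of $W$. So do not expect, nor force, exact agreement with the printed signs; apart from this, your plan goes through.
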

 
 \begin{proof}
 	By a standard formula, in terms of the Levi-Civita connection $\widetilde
 	{\nabla}$
 	\[
 	-d^{\ast}\phi=T\lrcorner\widetilde{\nabla}_{T}\phi+Z_{\alpha}\lrcorner
 	\widetilde{\nabla}_{\overline{Z}_{\alpha}}\phi+\overline{Z}_{\alpha}%
 	\lrcorner\widetilde{\nabla}_{Z_{\alpha}}\phi.
 	\]
 	We recall the relationship between the Levi-Civita connection $\widetilde
 	{\nabla}$ and the Tanka-Webster connection, that can be found in \cite{DragomirTomassini}: for $X,Y$ horizontal
 	\begin{align*}
 		\widetilde{\nabla}_{T}T  & =0,\\
 		\widetilde{\nabla}_{X}T  & =AX+\frac{1}{2}JX,\\
 		\widetilde{\nabla}_{X}Y  & =\nabla_{X}Y-\left[  \left\langle AX,Y\right\rangle
 		+\frac{1}{2}d\theta\left(  X,Y\right)  \right]  T.
 	\end{align*}
 	We compute, using the above identities%
 	\begin{align*}
 		T\lrcorner\widetilde{\nabla}_{T}\phi & =f_{\mu\overline{\nu}}\left(
 		\widetilde{\nabla}_{T}\theta^{\mu}\left(  T\right)  \theta^{\overline{\nu}%
 		}-\widetilde{\nabla}_{T}\theta^{\overline{\nu}}\left(  T\right)  \theta^{\mu
 		}\right) \\
 		& =f_{\mu\overline{\nu}}\left(  -\theta^{\mu}\left(  \widetilde{\nabla}%
 		_{T}T\right)  \theta^{\overline{\nu}}+\theta^{\overline{\nu}}\left(
 		\widetilde{\nabla}_{T}T\right)  \theta^{\mu}\right) 
 		=0,
 	\end{align*}%
 	\begin{align*}
 		& Z_{\alpha}\lrcorner\widetilde{\nabla}_{\overline{Z}_{\alpha}}\phi \\ &
 		=\overline{Z}_{\alpha}f_{\alpha\overline{\nu}}\theta^{\overline{\nu}}%
 		+f_{\mu\overline{\nu}}\widetilde{\nabla}_{\overline{Z}_{\alpha}}\theta^{\mu
 		}\left(  Z_{\alpha}\right)  \theta^{\overline{\nu}}-f_{\mu\overline{\nu}%
 		}\widetilde{\nabla}_{\overline{Z}_{\alpha}}\theta^{\overline{\nu}}\left(
 		Z_{\alpha}\right)  \theta^{\mu}+f_{\alpha\overline{\nu}}\widetilde{\nabla
 		}_{\overline{Z}_{\alpha}}\theta^{\overline{\nu}}\\
 		& =\overline{Z}_{\alpha}f_{\alpha\overline{\nu}}\theta^{\overline{\nu}}%
 		-f_{\mu\overline{\nu}}\theta^{\mu}\left(  \widetilde{\nabla}_{\overline
 			{X}_{\alpha}}Z_{\alpha}\right)  \theta^{\overline{\nu}}+f_{\mu\overline{\nu}%
 		}\theta^{\overline{\nu}}\left(  \widetilde{\nabla}_{\overline{Z}_{\alpha}%
 		}Z_{\alpha}\right)  \theta^{\mu}\\
 		& -f_{\alpha\overline{\nu}}\left(  \theta^{\overline{\nu}}\left(
 		\widetilde{\nabla}_{\overline{Z}_{\alpha}}Z_{\beta}\right)  \theta^{\beta
 		}+\theta^{\overline{\nu}}\left(  \widetilde{\nabla}_{\overline{Z}_{\alpha}%
 		}\overline{Z}_{\beta}\right)  \theta^{\overline{\beta}}+\theta^{\overline{\nu
 		}}\left(  \widetilde{\nabla}_{\overline{Z}_{\alpha}}T\right)  \theta\right) \\
 		& =\overline{Z}_{\alpha}f_{\alpha\overline{\nu}}\theta^{\overline{\nu}}%
 		-f_{\mu\overline{\nu}}\theta^{\mu}\left(  \nabla_{\overline{Z}_{\alpha}%
 		}Z_{\alpha}\right)  \theta^{\overline{\nu}}+f_{\mu\overline{\nu}}%
 		\theta^{\overline{\nu}}\left(  \nabla_{\overline{Z}_{\alpha}}Z_{\alpha
 		}\right)  \theta^{\mu}\\
 		& -f_{\alpha\overline{\nu}}\left(  \theta^{\overline{\nu}}\left(
 		\nabla_{\overline{Z}_{\alpha}}Z_{\beta}\right)  \theta^{\beta}+\theta
 		^{\overline{\nu}}\left(  \nabla_{\overline{Z}_{\alpha}}Z_{\overline{\beta}%
 		}\right)  \theta^{\overline{\beta}}+-\frac{i}{2}\delta_{\alpha}^{\nu
 		}\theta\right) \\
 		& =f_{\alpha\overline{\nu},\overline{\alpha}}\theta^{\overline{\nu}}%
 		+\frac{i}{2}\Lambda\left(  \phi\right)  \theta,
 	\end{align*}
 	and similarly,
 	\begin{align*}
 		& \overline{Z}_{\alpha}\lrcorner\widetilde{\nabla}_{Z_{\alpha}}\phi \\ &
 		=Z_{\alpha}f_{\mu\overline{\alpha}}\theta^{\mu}-f_{\mu\overline{\nu}%
 		}\widetilde{\nabla}_{Z_{\alpha}}\theta^{\overline{\nu}}\left(  \overline
 		{X}_{\alpha}\right)  \theta^{\mu}+f_{\mu\overline{\nu}}\widetilde{\nabla
 		}_{Z_{\alpha}}\theta^{\mu}\left(  \overline{Z}_{\alpha}\right)  \theta
 		^{\overline{\nu}}-f_{\mu\overline{\alpha}}\widetilde{\nabla}_{Z_{\alpha}%
 		}\theta^{\mu}\\
 		& =Z_{\alpha}f_{\mu\overline{\alpha}}\theta^{\mu}+f_{\mu\overline{\nu}}%
 		\theta^{\overline{\nu}}\left(  \widetilde{\nabla}_{Z_{\alpha}}\overline
 		{X}_{\alpha}\right)  \theta^{\mu}-f_{\mu\overline{\nu}}\theta^{\mu}\left(
 		\widetilde{\nabla}_{Z_{\alpha}}\overline{Z}_{\alpha}\right)  \theta
 		^{\overline{\nu}}\\
 		& +f_{\mu\overline{\alpha}}\left(  \theta^{\mu}\left(  \widetilde{\nabla
 		}_{Z_{\alpha}}Z_{\beta}\right)  \theta^{\beta}+\theta^{\mu}\left(
 		\widetilde{\nabla}_{Z_{\alpha}}\overline{Z}_{\beta}\right)  \overline{\theta
 		}^{\beta}+\theta^{\mu}\left(  \widetilde{\nabla}_{Z_{\alpha}}T\right)
 		\theta\right) \\
 		& =Z_{\alpha}f_{\mu\overline{\alpha}}\theta^{\mu}+f_{\mu\overline{\nu}}%
 		\theta^{\overline{\nu}}\left(  \nabla_{Z_{\alpha}}\overline{Z}_{\alpha
 		}\right)  \theta^{\mu}-f_{\mu\overline{\nu}}\theta^{\mu}\left(  \nabla
 		_{Z_{\alpha}}\overline{Z}_{\alpha}\right)  \theta^{\overline{\nu}}\\
 		& +f_{\mu\overline{\alpha}}\left(  \theta^{\mu}\left(  \nabla_{Z_{\alpha}%
 		}Z_{\beta}\right)  \theta^{\beta}+\theta^{\mu}\left(  \nabla_{Z_{\alpha}%
 		}\overline{Z}_{\beta}\right)  \overline{\theta}^{\beta}+\frac{i}%
 		{2}\delta_{\alpha}^{\mu}\theta\right) \\
 		& =f_{\mu\overline{\alpha},\alpha}\theta^{\mu}+\frac{i}{2}%
 		\Lambda\left(  \phi\right)  \theta.
 	\end{align*}
 	We remark that in the calculations the torsion $A$ does not appear because it
 	anti-commutes with $J$ and therefore maps a $\left(  1,0\right)  $-vector to a
 	$\left(  0,1\right)  $-vector and vice versa. Combining these results yields%
 	\[
 	-d^{\ast}\phi=f_{\alpha\overline{\nu},\overline{\alpha}}\theta^{\overline{\nu
 	}}+f_{\mu\overline{\alpha},\alpha}\theta^{\mu}+i\Lambda\left(
 	\phi\right)  \theta, 
 	\]
 	concluding the proof. 
 \end{proof}
 
Recall that a pseudohermitian structure $\theta$ is called pseudo-Einstein if
 $R_{\mu\overline{\nu}}=\frac{W}{n}\delta_{\mu\overline{\nu}}$, where $W$ is
 the scalar curvature. When $n=1$, this is always true. From now on we assume
 $n\geq2$. Lee showed in \cite{Lee2}  that a necessary condition for the existence of
 a pseudo-Einstein structure is $c_{1}\left(  M\right)  =0$.
 
 \begin{pro}\label{p:p-app}
 	Suppose $M$ is a closed CR manifold of dimension $2n+1\geq5$ with
 	$c_{1}\left(  M\right)  =0$. If $\theta$ is a pseudohermitian structure
 	with zero torsion and constant scalar curvature, then it is pseudo-Einstein.
 \end{pro}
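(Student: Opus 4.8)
The plan is to adapt the classical Kähler argument: exhibit the trace-free part of the Webster--Ricci form as a harmonic $2$-form which is simultaneously exact, and conclude that it vanishes. Since the torsion is zero, the closed $2$-form \eqref{cform} representing $2\pi c_{1}(M)$ reduces to the real closed $(1,1)$-form $\rho_{\theta} = i R_{\mu\overline{\nu}}\theta^{\mu}\wedge\theta^{\overline{\nu}}$, and the hypothesis $c_{1}(M)=0$ forces $\rho_{\theta} = d\eta$ for some real $1$-form $\eta$. Recalling that $d\theta = i\delta_{\mu\overline{\nu}}\theta^{\mu}\wedge\theta^{\overline{\nu}}$ and that $W$ is constant, I would set
\[
  \phi := \rho_{\theta} - \frac{W}{n}\,d\theta = i\left(R_{\mu\overline{\nu}} - \frac{W}{n}\delta_{\mu\overline{\nu}}\right)\theta^{\mu}\wedge\theta^{\overline{\nu}},
\]
which is again real and exact (in particular closed), and whose trace is $\Lambda(\phi) = i\bigl(W - \frac{W}{n}\cdot n\bigr) = 0$. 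The proposition is then equivalent to the statement $\phi\equiv 0$.

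The main step is to show that $\phi$ is also co-closed. I would apply Lemma \ref{l:app-1} with $f_{\mu\overline{\nu}} = i\bigl(R_{\mu\overline{\nu}} - \frac{W}{n}\delta_{\mu\overline{\nu}}\bigr)$: the $\theta$-component of $-d^{\ast}\phi$ is $i\Lambda(\phi)\theta = 0$, while its horizontal part equals $i R_{\alpha\overline{\nu},\overline{\alpha}}\theta^{\overline{\nu}} + i R_{\mu\overline{\alpha},\alpha}\theta^{\mu}$, the contribution of the $\delta$-term dropping out because the Levi metric is parallel and $W$ is constant. These two coefficients are, up to conjugation, the divergence of the Webster--Ricci tensor; by the contracted second Bianchi identity for the Tanaka--Webster connection (see \cite{Lee2}), this divergence is expressed through the covariant derivative of $W$ together with terms linear in the torsion and its first derivatives, and these all vanish here since $A\equiv 0$ and $W$ is constant. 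Hence $d^{\ast}\phi = 0$.

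Finally, $\phi$ is closed and co-closed, hence harmonic for the adapted Riemannian metric; being exact, one writes $\phi = d\gamma$ and obtains $\int_{M} |\phi|^{2}\, dV = \int_{M} \langle d\gamma, \phi\rangle\, dV = \int_{M} \langle \gamma, d^{\ast}\phi\rangle\, dV = 0$ on the closed manifold $M$, so $\phi\equiv 0$, that is $R_{\mu\overline{\nu}} = \frac{W}{n}\delta_{\mu\overline{\nu}}$ and $\theta$ is pseudo-Einstein. I expect the only delicate point to be the bookkeeping of the middle paragraph: matching the sign and normalization conventions between the $(1,1)$-form $\phi$ and the tensor $f_{\mu\overline{\nu}}$ entering Lemma \ref{l:app-1}, and invoking the precise contracted Bianchi identity so that one genuinely sees the torsion terms disappear.
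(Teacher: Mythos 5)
Your argument is correct and is essentially identical to the paper's own proof: the same trace-free form $\phi=\rho_{\theta}-\frac{W}{n}d\theta$, the same application of Lemma \ref{l:app-1} together with the contracted Bianchi identity (with $A\equiv 0$ and $W$ constant) to get $d^{\ast}\phi=0$, and the same exactness-plus-integration-by-parts conclusion $\phi\equiv 0$. No gaps beyond the routine sign and normalization bookkeeping you already flag.
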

 
 \begin{proof}
 	Let $\phi=\rho_{\theta}-\frac{W}{n}d\theta$. Since $A=0$, we have%
 	\begin{align*}
 		\phi & =iR_{\mu\overline{\nu}}\theta^{\mu}\wedge\theta^{\overline{\nu
 		}}-\frac{W}{n}d\theta
 		=i\left(  R_{\mu\overline{\nu}}-\frac{W}{n}\delta_{\mu\overline{\nu
 		}}\right)  \theta^{\mu}\wedge\theta^{\overline{\nu}},
 	\end{align*}
 	It is a real $(1,1)$-form with $\Lambda\left(  \phi\right)  =0$. As $A=0$, we
 	have $R_{\alpha\overline{\nu},\alpha}=R_{\mu\overline{\alpha},\alpha}=0$ by
 	the Bianchi identities. Together with $W$ being constant, we see that $d^{\ast}%
 	\phi=0$ by  Lemma \ref{l:app-1}. As $c_{1}\left(  T^{1,0}M\right)  =0$, $\rho_{\theta}$
 	is exact, i.e. there is a real $1$-form $\chi$ s.t. $\rho_{\theta}=d\chi$.
 	Then $\phi=d\widetilde{\chi}$, where $\widetilde{\chi}=\chi-\frac{W}{n}\theta
 	$. It follows%
 	\[
 	\left\Vert \phi\right\Vert ^{2}=\left\langle \phi,d\widetilde{\chi
 	}\right\rangle =\left\langle d^{\ast}\phi,\widetilde{\chi}\right\rangle =0.
 	\]
 	Therefore $\phi=0$, i.e. $\theta$ is pseudo-Einstein.
 \end{proof}

\end{document}